\renewcommand{\subsubsection}{\@startsection
{subsubsection}
{3}
{0mm}
{\baselineskip}
{-0.5\baselineskip}
{\normalfont\normalsize\bfseries}}
\newtheorem{theorem}{Theorem}
\newtheorem{lemma}[theorem]{Lemma}
\newtheorem{proposition}[theorem]{Proposition}
\newtheorem{corollary}[theorem]{Corollary}
\newtheorem{definition}[theorem]{Definition}
\newtheorem*{acknow}{Acknowledgments}
\theoremstyle{remark}
\newtheorem{remark}[theorem]{Remark}
\def\la{{\lambda}}
\def\cal L{{\mathcal L}}
\def\aa{\alpha}
\newcommand{\tcercle}[1]{\ensuremath{\setlength{\unitlength}{1ex}\begin{picture}(2.8,2.8)\put(1.4,1.4){\circle{2.7}\makebox(-5.6,0){#1}}\end{picture}}}
\newcommand{\gcercle}{\ensuremath{\setlength{\unitlength}{1ex}\begin{picture}(5,5)\put(2.5,2.5){\circle{5}}\end{picture}}}
\newcommand{\aand}{\ensuremath{ \quad\textrm{and}\quad}}
\def\C{{\mathcal C}}
\def\R{{\mathcal R}}
\def\B{{\mathcal B}}
\let\la\lambda
\let\La\Lambda
\let\Om\Omega
\let\ta\theta
\let\rw\rightarrow
\let\Ga\Gamma
\newcommand{\LL}{\ensuremath{\langle\!\langle}}
\newcommand{\RR}{\ensuremath{\rangle\!\rangle}}
\newcommand{\coeff}[1]{\ensuremath{\underset{#1}{\mathrm{coeff}}}}
\def\cd{{\circledast}}
\def\Sp{\mathrm{{E}}}
\def\Spt{\mathrm{{\widetilde{E}}}}
\def\lrw{\leftrightarrow}
\def\olw{\overleftarrow}
\def\orw{\overrightarrow}
\def\cd{{\circledast}}
\def\Sp{\mathrm{{E}}}
\def\Spt{\mathrm{{\widetilde{E}}}}
\def\lrw{\leftrightarrow}
\def\olw{\overleftarrow}
\def\orw{\overrightarrow}
\begin{document}

\title{{Evaluation} and normalization of Jack superpolynomials}

\author[P.\ Desrosiers]{Patrick Desrosiers}

\address{Instituto de Matem\'atica y F\'{\i}sica, Universidad de
Talca, 2 norte 685, Talca, Chile.}
\email{patrick.desrosiers@inst-mat.utalca.cl}

\author[L.\ Lapointe]{Luc Lapointe}
\address{Instituto de Matem\'atica y F\'{\i}sica, Universidad de
Talca, 2 norte 685, Talca, Chile.}
\email{lapointe@inst-mat.utalca.cl }
\author[P.\ Mathieu]{Pierre Mathieu}
\address{D\'epartement de physique, de g\'enie physique et
d'optique, Universit\'e Laval,  Qu\'ebec, Canada,  G1V 0A6.}
\email{pmathieu@phy.ulaval.ca}

 \begin{abstract}
Two evaluation formulas are derived for the  Jack superpolynomials. The evaluation formulas are   expressed in terms of products of fillings of skew diagrams.  One of these formulas is nothing but the evaluation formula of the Jack polynomials with prescribed symmetry, which thereby receives here a remarkably simple formulation.
Among the auxiliary results required to establish the evaluation formulas, the determination of the conditions ensuring the non-vanishing coefficients in a Pieri-type rule for Jack superpolynomials is worth pointing out.  An important application of the evaluation formulas is a new derivation of the combinatorial norm of the Jack superpolynomials.  We finally mention that the introduction of a simpler version of the dominance ordering on superpartitions is fundamental to establish our results.
\end{abstract}

\thanks{This work was  supported by the Natural Sciences and Engineering Research Council of Canada; the
Fondo Nacional de Desarrollo Cient\'{\i}fico y
Tecnol\'ogico de Chile [\#1090034 to P.D., \#1090016 to L.L.]; and the Comisi\'on Nacional de Investigaci\'on Cient\'ifica y Tecnol\'ogica de Chile [Redes De Colaboraci\'on RED4,
 Anillo de Investigaci\'on ACT56 Lattice and Symmetry].}

\subjclass[2000]{05E05 (Primary), 81Q60 and 33D52 (Secondary)}

 \maketitle

\newpage

{ \small
 \setlength{\parskip}{0pt plus 1pt minus 1pt}

\tableofcontents

 \setlength{\parskip}{5pt plus 1pt minus 1pt}
}

\section{Introduction}
\def\B{\mathcal{B}}
\def\F{\mathcal{F}}
\def\S{\mathcal{S}}

The Jack superpolynomials were introduced in 2003 \cite{DLMcmp2} as the orthogonal eigenfunctions  of a quantum mechanical many-body problem that had been formulated a decade before, namely the supersymmetric Calogero-Moser-Sutherland model \cite{SS,BTW} (see also \cite{DLMnpb} for more results and references on this model).   As their name suggests, the Jack superpolynomials generalize Jack's symmetric polynomials \cite{Mac, Stan} by incorporating both commuting and anticommuting variables.  The presence of anticommuting variables obviously makes
computations and demonstrations more involved than in
the classical theory of symmetric polynomials.
But despite this apparent complexity, the Jack superpolynomials
share many elegant properties with their classical counterparts
\cite{DLMcmp2,DLMadv}, such as orthogonality with respect to
two different scalar products, and duality.
 The aim of the article is to further develop the strong analogy between
the properties of the Jack superpolynomials and those of the Jack polynomials.
Before presenting the most relevant results, let us review some elements of the theory of symmetric superpolynomials.

\subsection{Superpartitions}
Superpartitions were first introduced in 2001 \cite{DLMnpb}, but it was later noticed
{\cite{DLMjalgcomb}}
  that they could be interpreted as overpartitions \cite{CL}
  or as MacMahon standard diagrams \cite{Pak}.  Here we adopt the following definition:
 \begin{definition}  \label{defsuperpart} A superpartition $\Lambda$ of
degree $(n|m)$ and length $\ell$
  is a pair $(\Lambda^\circledast,\Lambda^*)$ of partitions
$\Lambda^\circledast$ and $\Lambda^*$ such
 that
 \begin{enumerate} \item $\Lambda^* \subseteq \Lambda^\circledast$
 \item the degree of $\Lambda^*$ is $n$
 \item the length of $\Lambda^\circledast$ is $\ell$
 \item the skew diagram $\Lambda^\circledast/\Lambda^*$
is both a horizontal and a vertical $m$-strip.
 \end{enumerate}
 \end{definition}
\noindent Note that we follow Macdonald's notation
for partitions, diagrams and skew-diagrams (see Section~\ref{sectintro}
and \cite{Mac}).  Obviously, if
$\Lambda^\circledast= \Lambda^*=\lambda$,
then $\Lambda=(\lambda,\lambda)$ can be interpreted as the partition
$\lambda$.

 A very convenient way to represent superpartitions  was
{introduced}
  in \cite{DLMjalgcomb}. Concretely, the Ferrers diagram of a superpartition $\Lambda=(\Lambda^\circledast,\Lambda^*)$ is obtained by
\begin{enumerate}
 \item drawing the diagram of $\Lambda^\circledast$, and
  \item replacing the cells that belong to
$\Lambda^\circledast/\Lambda^*$ by circles.
  \end{enumerate}
Figure 1 illustrates this procedure for the case
$\Lambda^\circledast=(4,3,3,1,1)$ and $\Lambda^*=(3,3,2,1)$.
To distinguish them from the circles,
the cells corresponding to those of $\Lambda^*$ in the diagram of $\Lambda$
will be called squares.
{Because
  the circles form a horizontal and a vertical strip, two circles cannot appear in the same column nor in the same row. In other words, two rows or two columns ending with a circle cannot have the same length. This situation is clearly reminiscent of the Pauli exclusion principle for fermionic states in quantum physics. For this reason,
   rows and columns that terminates with a circle are called  fermionic, the other ones being said to be bosonic.}
   The bosonic content of a superpartition, denoted by $\mathcal{B}\Lambda$, is defined as the set of squares in the diagram of $\Lambda$ that do not belong at the same time to a fermionic row and
a fermionic column.
The fermionic content of $\Lambda$ is given by the complement of the bosonic content in the diagram $\Lambda^\circledast$, that is,
$\mathcal{F}\La=\Lambda^\circledast/\mathcal{B}\Lambda$.   See Figure 2.

\begin{figure}[h]
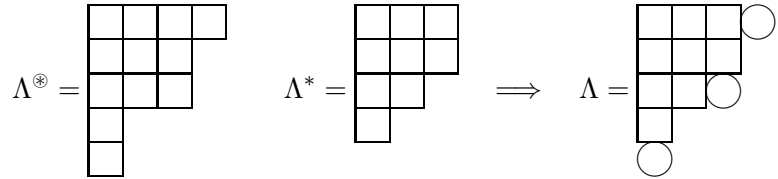
\label{Fig1}\caption{Diagram of a superpartition $\Lambda=(\Lambda^\circledast,\Lambda^*)$}
\begin{equation*}
\Lambda^\circledast={\tableau[scY]{&&&\\&&\\&&\\ \\ \\}}\qquad
\Lambda^*={\tableau[scY]{&&\\&&\\ &\\ \\ \bl}} \quad\Longrightarrow\quad \Lambda={\tableau[scY]{&&&\bl\tcercle{}\\&&\\& &\bl\tcercle{}\\ \\\bl\tcercle{}}}
\end{equation*}
\end{figure}

In the following paragraphs, we shall extensively make use of a partial order on superpartitions
{that generalizes naturally the usual dominance order.}
  Let us recall that for any pair of partitions $\lambda$ and $\nu$ of $n$, $\lambda \geq \nu$ in the dominance order if and only if $\sum_{i=1}^k\lambda_i\geq \sum_{i=1}^k\nu_i$ for all $k$.  We now equip the set of all superpartitions of a given degree $(n|m)$ with the following dominance order:
  \begin{equation}\label{eqdeforder1}
 \Lambda \geq
\Omega   \quad \Longleftrightarrow\quad \Lambda^\circledast \geq \Omega^\circledast \quad \text{ and }\quad
\Lambda^* \geq \Omega^*.
  \end{equation}

\begin{figure}[h]
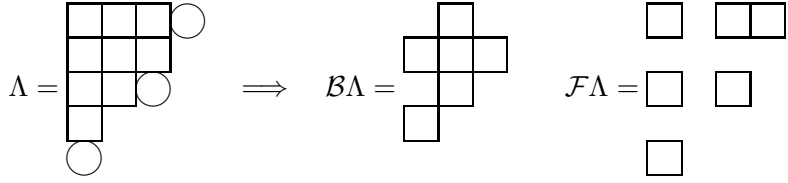
\label{Fig2}\caption{Fermionic and bosonic contents of  a superpartition $\Lambda$}
\begin{equation*}
\Lambda={\tableau[scY]{&&&\bl\tcercle{}\\&&\\& &\bl\tcercle{}\\ \\\bl\tcercle{}}}\quad\Longrightarrow\quad \mathcal{B}\Lambda=
{\tableau[scY]{\bl & \\&&\\ \bl & \\ \\ \bl}}\qquad \mathcal{F}\Lambda={\tableau[scY]{&\bl &&\\ \bl \\ &\bl &\\\bl \\ &\bl}} \qquad
\end{equation*}
\end{figure}

\subsection{Jack symmetric superpolynomials}

Let $x=(x_1,\ldots,x_N)$ and $\theta=(\theta_1,\ldots,\theta_M)$ be two sets of indeterminates that satisfy the following commutation relations:
\begin{equation} x_i x_j=x_jx_i ,\quad x_i \theta_j=\theta_j x_i,\quad \theta_i \theta_j=-\theta_j\theta_i,\quad \theta_i^2=0 \, ,
 \end{equation} for all indices $i,j$.
 A superpolynomial, or polynomial in superspace, is an element of the ring of polynomials in $x$ and $\theta$ over a ring $R$.
 Equivalently, a superpolynomial is an element of the Grassmann algebra generated by $(\theta_1,\ldots,\theta_M)$  over the polynomial ring $R[x_1,\ldots,x_N]$. Following the
 {terminology  used in}
 physics, the variables $x$ and $\theta$ will be respectively called bosonic and fermionic.

 From now on, we set $N=M$ and assume that $R$ is the field $\mathbb{Q}(\alpha)$ of rational functions in the indeterminate $\alpha$.
 A symmetric superpolynomial \cite{DLMnpb} is a superpolynomial $f(x,\theta)$ such that
 \begin{equation}\label{eqdefsymsuperpoly} f(x_1,\ldots,x_N,\theta_1,\ldots,\theta_N)= f(x_{\sigma(1)},\ldots,x_{\sigma(N)},
\theta_{\sigma(1)},\ldots,\theta_{\sigma(N)})
 \end{equation}
 for any  permutation $\sigma$ of $\{1,\ldots,N\}$.  Notice that if
$f(x,\theta)$ is symmetric and homogeneous both in $x$ and in
$\theta$, then
it can be decomposed as follows:
 \begin{equation}\label{Eqformsuperpoly} f(x_1,\ldots,x_N,\theta_1,\ldots,\theta_N)=\sum_{1\leq i_1<\ldots<i_m\leq N}\theta_{i_1}\cdots\theta_{i_m}f_{i_1,\ldots ,i_m}(x), \end{equation}
where $f_{i_1,\ldots ,i_m}(x)$ is a homogeneous polynomials
 antisymmetric in the variables $x_{i_1},\ldots,x_{i_m}$ and symmetric in the
remaining variables.  In fact, $f_{i_1,\ldots ,i_m}(x)$ is an example of
 a polynomial with prescribed symmetry \cite{BDF,Baker,McAnally}.

The set of homogeneous symmetric superpolynomials of degree $n$ in $x$ and
$m$ in $\theta$ obviously forms a finite vector space over $\mathbb{Q}(\alpha)$,
 which will be denoted $\mathscr{R}_N^{n,m}$.
As explained in \cite{DLMnpb,DLMjalgcomb},
 there exists a bijective map between any  basis of $\mathscr{R}^{n,m}_N$ and the set of
superpartitions $\La$ of degree $(n|m)$ and of length not larger than $N$.
 We will see in Section \ref{spins} that the symmetric monomials $m_\Lambda(x,\theta)$ form a simple basis of  $\mathscr{R}^{n,m}_N$.  For the moment,  the only additional information we need
{concerning}
  the symmetric monomials is the following  stability property:
  \begin{equation}\label{restriction} \rho_{M,N} :m_\Lambda(x_1,\ldots,x_{M}, \theta_1,\ldots,\theta_{M}) \mapsto
m_\La(x_1,\ldots,x_{N}, \theta_1,\ldots,\theta_{N})\quad \forall M\geq N,
  \end{equation}
  where $\rho_{M,N}$ is the homomorphism $\mathscr{R}^{n,m}_M\to \mathscr{R}^{n,m}_N$
 that sends the indeterminates $x_{N+1},\theta_{N+1}$, $\ldots$, $x_M,\theta_M$ to zero and acts as the identity on the remaining ones.
 Note that by definition, $m_\La(x,\theta)$ is zero whenever the length of the superpartition is greater than the number of bosonic variables.

The stability property together with the fact that $\rho_{N,N}=\mathrm{id}$ and $\rho_{M,N}\circ\rho_{L,M}=\rho_{L,N}$ for all $N\leq M\leq L$, enable us to take the inverse limit:
\begin{equation}m_\Lambda =\lim_{\longleftarrow}   m_\La(x,\theta)=\left( m_\La(x_1,\theta_1),m_\La(x_1,x_2,\theta_1,\theta_2), m_\La(x_1,x_2,x_3,\theta_1,\theta_2,\theta_3),\ldots\right).
\end{equation}
We can then identify  ${\rm span_{\mathbb Q(\alpha)}}\{m_\La :\, \La \text{ is a superpartition}\}$ with the following bi-graded vector space:
\begin{equation}  \mathscr{R}=\bigoplus_{n\geq 1,m\geq 0}\mathscr{R}^{n,m},\qquad \mathscr{R}^{n,m}=\lim_{\longleftarrow}     \mathscr{R}_N^{n,m}.
  \end{equation}
Given that the symmetric superpolynomials in $N$ bosonic and $N$ fermionic variables form a ring,
the componentwise product between elements of $\mathscr{R}$ is well defined, and so $\mathscr{R}$ also carries the structure of a bi-graded algebra. The elements  of the latter  will be called symmetric superfunctions.
$\mathscr{R}$ is moreover equipped with a surjective homomorphism $\rho_N:\, \mathscr{R}\to \bigoplus_{n,m}\mathscr{R}_N^{n,m}$ that maps all the $x_i$ and $\theta_i$  with $i>N$ to zero.  To sum up,
any element $f$ of $\mathscr{R}$ is a symmetric superfunction; it is equal to a finite linear combination of the monomials
$m_\La$; and to any such $f$  corresponds a symmetric superpolynomial in $N$ bosonic and $N$ fermionic indeterminates, $f(x,\theta)=\rho_N(f)$, which is nonzero if $N$ is large enough.

It was shown in \cite{DLMjalgcomb, DLMadv} that
the algebra $\mathscr{R}$ of symmetric superfunctions can be endowed with a natural scalar product
\begin{equation}
 \LL \quad |\quad \RR\,:\,  \mathscr{R}\times \mathscr{R}\longrightarrow \mathbb{Q}(\alpha),
\end{equation} which generalizes the usual Hall scalar product for symmetric polynomials \cite{Mac} (see \eqref{scalarpp} for an explicit definition of the scalar
product).

We are now in a position to define the Jack superpolynomials.
\begin{definition}\label{defJacksusy}  Let $\Lambda$ be  superpartitions of degree $(n|m)$.
The monic Jack superfunction $P_\Lambda$ is the unique element of $\mathscr{R}$
  that satisfies
 \begin{align} \label{triangdefin}
 & P_\Lambda = m_\Lambda +\sum_{\Omega< \Lambda} c_{\Lambda\Omega} \,m_\Omega  &\text{(triangularity)}  \\
&\LL P_\Lambda | P_\Omega \RR =0  \quad {\rm if~} \La \neq \Om & \text{(orthogonality)}
 \end{align}
where the coefficients $c_{\Lambda\Omega}$ in the triangularity relation belong to
$\mathbb{Q}(\alpha)$.  The monic Jack superpolynomial $P_\La(x,\theta)$ with $N$ bosonic  and  $N$ fermionic indeterminates   is equal to $\rho_N(P_\La)$.
\end{definition}
\noindent The existence of the superpolynomials $P_\Lambda(x,\theta)$ was proved in \cite{DLMadv}\footnote{We stress that in our previous works \cite{DLMcmp2,DLMadv}, we have denoted the monic Jack superpolynomials by $J_\La$. Here we model our notation on the standard one \cite{Mac,Stan} for the monic case and reserve the symbol $J_\La$ for a different normalization -- see Section \ref{speS}.
}.  It was also
 shown in \cite{DLMadv}
that
the  $P_\Lambda (x;\theta)$'s are equivalent
 to the Jack superpolynomials previously defined in \cite{DLMcmp2} as the orthogonal solutions of a quantum mechanical  eigenvalue problem (the orthogonality
being with respect to a distinct scalar product).  Note that the usual Jack symmetric polynomials $P_\lambda(x)$  are recovered by setting $\Lambda=(\lambda,\lambda)$, which corresponds to letting the
degree $m$ in the Grassmann  variables $\theta$ be equal to zero.

To conclude this review section, a precision
is in order.  Definition \ref{defJacksusy} is in fact a slightly more precise version than {the one} presented in \cite{DLMadv} in that the dominance order
controlling the triangular decomposition
is now more restrictive.  Indeed, the partial order $\trianglerighteq$
 used in \cite{DLMadv} was {defined as follows:}  For $\La$ and $\Omega$ two superpartitions of degree $(n|m)$,
\begin{equation} \label{eqorder2}
\La\trianglerighteq \Omega  \qquad \Longleftrightarrow\qquad
\Lambda^* > \Omega^*  \quad \text{or}\quad
\Lambda^* = \Omega^*  \quad \text{and}\quad
\Lambda^\circledast \geq \Omega^\circledast \, ,
\end{equation}
where again the order on partitions is the dominance order.
Observe that the order  $\trianglerighteq$ is {clearly}
less restrictive than the order $\geq$.  We shall nevertheless prove in Appendix~\ref{appen1} that the two orders lead to the same
symmetric polynomials in superspace, which will allow us to exploit all the
properties of the Jack superpolynomials obtained in \cite{DLMcmp2,DLMadv}.

\subsection{{Main} results}

A combinatorial formula for the norm squared $\LL P_\Lambda | P_\La \RR=
\|P_\Lambda \|^2$ was conjectured in \cite{DLMadv}.   With
$\Lambda=(\Lambda^\circledast,\Lambda^*)$
a superpartition of degree $(n|m)$,  the conjecture
given in \cite{DLMadv} is equivalent to
\begin{equation}\label{conj}  \|P_\Lambda \|^2=\alpha^m\prod_{s\in \B\Lambda}
\frac{l_{\Lambda^\circledast}(s)+\alpha\bigl(1+a_{\Lambda^*}(s)\bigr)}
{1+l_{\Lambda^*}(s)+\alpha \, a_{\Lambda^\circledast}(s)} \, ,
\end{equation}
where we
stress that the arm- and leg-lengths are evaluated with respect to two different diagrams (for the definitions of
$a_\lambda(s)$ and $l_\lambda(s)$  we refer to Section~\ref{sectintro}
or \cite{Mac}).

This formula was proved in \cite{LLN} using
a characterization of the Jack superpolynomials in terms of the
non-symmetric Jack
polynomials  -- cf. \cite[Sect.9]{DLMcmp2}. The norm expression is reduced in \cite{LLN} to an identity on partitions whose proof relies on
the Gessel-Viennot lemma.

Here we provide an alternative proof of \eqref{conj}.
Our proof essentially follows Stanley's method \cite{Stan} in which the norm formula for a Jack polynomials $P_\lambda(x)$
is obtained as a consequence of the evaluation formula and the duality property of the $P_\lambda(x)$'s.  In the superpolynomial case,
the proof relies on the duality, established in  \cite[Sect. 6.1]{DLMadv},
and two new evaluation formulas.

The precise statement of these evaluation
formulas requires some more notation.
Let $f(x,\theta)$ be an element  of $\mathscr{R}_N^{n,m}$, that is,  $f(x,\theta)$ is a bi-homogeneous symmetric superpolynomials that can thus be expanded
as in  \eqref{Eqformsuperpoly}.
The evaluation of such symmetric superpolynomials is defined as the map
 \begin{equation}E_{N,m}\,:\, \mathscr{R}_N^{n,m}\longrightarrow \mathbb{Q}(\alpha)
 \end{equation}
such that
\begin{equation}\label{eqdefENm} E_{N,m}( f)=\left[\prod_{1\leq i< j\leq m}(x_i-x_j)^{-1} f_{1,\ldots,m}(x)\right]_{x_1=\ldots=x_N=1}.
\end{equation}
Our central result are the following two evaluation formulas.
\begin{theorem}\label{mainT}Let $\Lambda=(\Lambda^{\circledast}
,\Lambda^*)$
be a superpartition of degree $(n|m)$ such that $\ell(\La)\leq N$.
Let $\mathcal{S}\Lambda$
be the skew-diagram $\Lambda^{\circledast}/\delta_{m+1}$ where $\delta_{m+1}$
stands for the diagram associated to the partition $(m,m-1,\ldots,0)$.  Finally, as in Figure 2, let $\mathcal{B}\Lambda$ denote the bosonic content of $\Lambda$.
Then the evaluation of the monic Jack polynomial $P_\Lambda(x,\theta)$ is given by
\begin{equation} \label{evformula}
E_{N,m}(P_\Lambda)=\frac{\prod_{s\in \mathcal{S} \Lambda }
\left(N-l_{\La^\circledast}'(s) +\alpha  a_{\La^\circledast}'(s)\right)}{\prod_{s\in \B \Lambda}\left(1+l_{\Lambda^*}(s)+\alpha
a_{\Lambda^{\circledast}}(s)\right)}.\end{equation}
\end{theorem}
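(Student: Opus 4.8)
The plan is to adapt Stanley's inductive argument, the induction being on the number $N$ of bosonic variables. The starting point is the branching rule for Jack superpolynomials — essentially the adjoint of the Pieri-type rule referred to in the introduction — which expresses $P_\Lambda$ in $N$ variables in terms of the $P_\Omega$ in $N-1$ variables, with coefficients depending only on $(x_N,\theta_N)$, the relevant $\Omega$'s being of fermionic degree $m$ or $m-1$. When $N>m$, only the terms of fermionic degree exactly $m$ can contribute to the coefficient of $\theta_1\cdots\theta_m$, since the terms of fermionic degree $m-1$ carry a factor $\theta_N$ and thus feed only the components $\theta_I$ with $N\in I$; and for those surviving terms the coefficient has the form $\psi_{\Lambda/\Omega}\,x_N^{|\Lambda^\circledast|-|\Omega^\circledast|}$ with $\psi_{\Lambda/\Omega}\in\mathbb Q(\alpha)$ independent of $N$. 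Dividing by the Vandermonde $\prod_{1\le i<j\le m}(x_i-x_j)$, which does not involve $x_N$ since $N>m$, and then setting $x_1=\dots=x_N=1$, one obtains the recursion
\begin{equation*}
E_{N,m}(P_\Lambda)=\sum_{\Omega}\psi_{\Lambda/\Omega}\,E_{N-1,m}(P_\Omega),\qquad N>m,
\end{equation*}
the sum running over superpartitions $\Omega$ of fermionic degree $m$.

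For the base case $N=m$, note that a superpartition of fermionic degree $m$ has $m$ fermionic rows whose lengths in $\Lambda^\circledast$ are pairwise distinct and positive, whence $\ell(\Lambda^\circledast)\ge m$, with equality exactly for the superpartition $\Lambda_0$ characterized by $\Lambda_0^\circledast=\delta_{m+1}$ (so that $\mathcal{S}\Lambda_0=\emptyset$, and also $\mathcal{B}\Lambda_0=\emptyset$). Since $P_\Lambda$ vanishes in $m$ bosonic variables whenever $\ell(\Lambda^\circledast)>m$, whereas $P_{\Lambda_0}=m_{\Lambda_0}$, whose $\theta_1\cdots\theta_m$ component is proportional to $\prod_{1\le i<j\le m}(x_i-x_j)$ and hence gives $E_{m,m}(P_{\Lambda_0})=1$, we obtain $E_{m,m}(P_\Lambda)=\delta_{\Lambda,\Lambda_0}$. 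On the right-hand side of \eqref{evformula} — call it $F_\Lambda(N)$ — the box of $\mathcal{S}\Lambda$ lying in column $1$ and row $m+1$, which is present precisely when $\ell(\Lambda^\circledast)>m$, contributes the factor $N-m$, vanishing at $N=m$; and for $\Lambda=\Lambda_0$ both products in $F_\Lambda$ are empty. Hence $F_\Lambda(m)=\delta_{\Lambda,\Lambda_0}$ as well, and it remains only to verify that $F_\Lambda(N)$ satisfies the same recursion, i.e.\ the identity $F_\Lambda(N)=\sum_\Omega\psi_{\Lambda/\Omega}F_\Omega(N-1)$.

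The real work lies in the first of the two remaining tasks: making the coefficients $\psi_{\Lambda/\Omega}$ explicit, together with the exact characterization of the pairs $\Lambda\supseteq\Omega$ for which $\psi_{\Lambda/\Omega}\ne0$. This is precisely the Pieri-type input emphasized in the introduction, and it is here that the interplay of the two diagrams $\Lambda^\circledast$ and $\Lambda^*$ — hence the appearance of the bosonic content $\mathcal{B}\Lambda$ — and the use of the more restrictive dominance order \eqref{eqdeforder1} become essential; I expect this to be the principal obstacle, as the vanishing conditions have no fully routine classical analogue. Granting it, the identity $F_\Lambda(N)=\sum_\Omega\psi_{\Lambda/\Omega}F_\Omega(N-1)$ is established by first cancelling the factors attached to the boxes of $\mathcal{S}\Lambda$ and $\mathcal{B}\Lambda$ that $\Lambda$ and $\Omega$ share, after which it collapses to a partial-fraction/telescoping identity in the variable $N$ localized on the horizontal strip $\Lambda^\circledast/\Omega^\circledast$ — the superspace counterpart of the verification in Stanley's proof. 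Finally, once \eqref{evformula} is proved, the second evaluation formula and the norm \eqref{conj} follow from it via the duality of \cite[Sect.~6.1]{DLMadv}, in the same manner in which Stanley derives the norm of the ordinary Jack polynomials.
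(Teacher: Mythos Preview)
Your outline diverges from the paper's proof in a crucial way, and the divergence hides a genuine circularity.

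You run an induction on $N$ with base case $N=m$, and your inductive step requires the \emph{explicit} branching coefficients $\psi_{\Lambda/\Omega}$ (equivalently, the explicit one-row Pieri coefficients $g^{\La}_{\Om,r}$) in order to verify that $F_\La(N)=\sum_\Om\psi_{\Lambda/\Omega}F_\Om(N-1)$.  But these coefficients are not available a priori: the paper only establishes \emph{necessary conditions for their non-vanishing} (the horizontal-strip conditions of Propositions~\ref{PropgI}--\ref{PropgII}), and even remarks that these are not ``if and only if'' in superspace.  In the classical case the explicit Pieri coefficients are obtained \emph{from} the evaluation formula and the norm, not the other way around; so your scheme is circular unless you supply an independent derivation of the $\psi_{\Lambda/\Omega}$, which you do not.

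The paper sidesteps this entirely.  It works with the normalization $J_\La=v_\La(\alpha)P_\La$, for which $\Sp_{N,m}[J_\La]$ is a \emph{monic polynomial in $N$} of known degree $|\La|-\binom{m}{2}$ (Lemma~\ref{LemmaSpecialIm} and Corollary~\ref{coro22}).  It then proves Theorem~\ref{TheoSpecialI} by a pure divisibility argument: show that $\Sp_{N,m}[J_\La]$ is divisible by each linear factor $N-(i-1)+\alpha(j-1)$, $(i,j)\in\S\La$.  The induction is on the fermionic degree $m$ (base case $m=0$ being Stanley's theorem), and for fixed $m$ on the dominance order.  Splitting off $(x_1,\theta_1)$ and taking $\partial_{\theta_1}$ forces $\La/\Om$ to be a horizontal $\tilde r$-strip; induction on $m$ then gives divisibility by all factors except those in the first row of $\S\La$.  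The first-row factors are handled by writing $J_{\tilde s}J_\Om$ (or $J_s J_\Om$) as $J_\La$ plus dominance-higher terms (Proposition~\ref{Propg}) and invoking the explicit computation of Lemma~\ref{LemmaJn}.  At no point are the actual values of the Pieri coefficients needed---only which $\Om$ can occur.  The formula for $P_\La$ then follows once $v_\La(\alpha)=\prod_{s\in\B\La}h^\La_{(\alpha)}(s)$ is determined in Section~\ref{Snorm}, itself a consequence of the two evaluation theorems via the column operations of Section~\ref{facpro}.
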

\begin{theorem}\label{mainT2}Let $\Lambda=(\Lambda^{\circledast}
,\Lambda^*)$
be a superpartition of degree $(n|m)$ such that $m>0$ and
$\ell(\La) \leq N$.
Let $\tilde{\mathcal{S}} \Lambda$
be the skew-diagram $\Lambda^{*}/\delta_{m}$.
The evaluation of
\begin{equation}F_\Lambda=\left[(-1)^{m-1}\partial_{\theta_N} P_\Lambda(x,\theta)\right]_{x_N=0}
\end{equation}
 is given by
\begin{equation}
E_{N-1,m-1}\bigl(F_\Lambda\bigr)=
\frac{\prod_{s\in \tilde \S\Lambda }
(N-1-l_{\La^*}'(s) +\alpha  a_{\La^*}'(s))}{\prod_{s\in \B\Lambda}(1+l_{\Lambda^*}(s)+\alpha
a_{\Lambda^{\circledast}}(s))}.\end{equation}
\end{theorem}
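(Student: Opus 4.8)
The plan is to deduce Theorem~\ref{mainT2} from Theorem~\ref{mainT}. Write $\phi\colon \mathscr{R}_N^{n,m}\to\mathscr{R}_{N-1}^{n,m-1}$ for the map $\phi(f)=(-1)^{m-1}\partial_{\theta_N}f|_{x_N=0}$, so $F_\Lambda=\phi(P_\Lambda)$, and let $P_{\Lambda;1,\dots,m}(x)$ denote the coefficient of $\theta_1\cdots\theta_m$ in $P_\Lambda$, i.e.\ the polynomial $f_{1,\dots,m}$ of \eqref{Eqformsuperpoly} with $f=P_\Lambda$ (a Jack polynomial with prescribed symmetry). The first step is two elementary observations. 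First, since $P_\Lambda$ is symmetric the coefficient of $\theta_1\cdots\theta_{m-1}\theta_N$ in $P_\Lambda$ is obtained from $P_{\Lambda;1,\dots,m}$ by exchanging the variables $x_m$ and $x_N$; unwinding \eqref{eqdefENm} this gives
\begin{equation*}
E_{N-1,m-1}(F_\Lambda)=\left[\prod_{1\le i<j\le m-1}(x_i-x_j)^{-1}\,P_{\Lambda;1,\dots,m}(x_1,\dots,x_{m-1},0,x_m,\dots,x_{N-1})\right]_{x_1=\cdots=x_{N-1}=1},
\end{equation*}
so that $E_{N-1,m-1}(F_\Lambda)$ is the value of $P_{\Lambda;1,\dots,m}$ at $(1^{\,m-1},0,1^{\,N-m})$, normalized by dividing out the Vandermonde of $x_1,\dots,x_{m-1}$ before specializing --- exactly the specialization appearing in Theorem~\ref{mainT}, but with one antisymmetric variable set to $0$ instead of $1$. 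Second, if $g$ is a symmetric function of the $x$'s alone then $\phi(gf)=(g|_{x_N=0})\,\phi(f)$, whence $E_{N-1,m-1}(g\,F_\Lambda)=g(1^{\,N-1})\,E_{N-1,m-1}(F_\Lambda)$.

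Next I would pass to Jack superpolynomials in $N-1$ bosonic and $m-1$ fermionic variables. Expanding $P_\Lambda$ on the monomial basis and using the rule $\phi(m_\Omega)=m_{\Omega^-}$ when the smallest fermionic part of $\Omega$ is $0$ (with $\Omega^-$ the degree-$(n|m-1)$ superpartition obtained by deleting that part) and $\phi(m_\Omega)=0$ otherwise, one combines the simplified dominance order \eqref{eqdeforder1} with the triangularity \eqref{triangdefin} to pin down the finitely many $\Omega$ of degree $(n|m-1)$, $\ell(\Omega)\le N-1$, occurring in $F_\Lambda=\sum_\Omega d_{\Lambda\Omega}P_\Omega$, and the triangular shape of the $d_{\Lambda\Omega}$. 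Applying $E_{N-1,m-1}$ and Theorem~\ref{mainT} term by term then reduces the claim to the summation identity
\begin{equation*}
\sum_\Omega d_{\Lambda\Omega}\,
\frac{\prod_{s\in\Omega^\circledast/\delta_{m}}\bigl(N-1-l_{\Omega^\circledast}'(s)+\alpha\, a_{\Omega^\circledast}'(s)\bigr)}
{\prod_{s\in\mathcal B\Omega}\bigl(1+l_{\Omega^*}(s)+\alpha\, a_{\Omega^\circledast}(s)\bigr)}
=\frac{\prod_{s\in\Lambda^*/\delta_{m}}\bigl(N-1-l_{\Lambda^*}'(s)+\alpha\, a_{\Lambda^*}'(s)\bigr)}
{\prod_{s\in\mathcal B\Lambda}\bigl(1+l_{\Lambda^*}(s)+\alpha\, a_{\Lambda^\circledast}(s)\bigr)} .
\end{equation*}
The branching coefficients $d_{\Lambda\Omega}$ are controlled by the Pieri-type rule and its non-vanishing conditions established earlier in the paper. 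Alternatively --- and more in Stanley's \cite{Stan} spirit --- one avoids computing them: by the second observation $\phi$ intertwines the multiplications used to prove Theorem~\ref{mainT}, so each Pieri relation $g\,P_\Lambda=\sum_{\Omega'}c_{\Lambda\Omega'}P_{\Omega'}$ driving that proof yields $g(1^{\,N-1})\,E_{N-1,m-1}(F_\Lambda)=\sum_{\Omega'}c_{\Lambda\Omega'}\,E_{N-1,m-1}(F_{\Omega'})$; together with triangularity and the boundary value $E_{N-1,m-1}(F_\Lambda)=1$ at the minimal superpartition ($\Lambda^*=\delta_m$, $\Lambda^\circledast=\delta_{m+1}$, where $F_\Lambda=m_{\Lambda^-}$ is a shifted Vandermonde), these relations determine $E_{N-1,m-1}(F_\Lambda)$ uniquely, and one checks that the right-hand side of Theorem~\ref{mainT2} solves them.

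The main obstacle is exactly this last verification --- equivalently, the summation identity above. It comes down to a combinatorial identity on arm- and leg-lengths handled simultaneously on the two diagrams $\Lambda^*$ and $\Lambda^\circledast$: matching the Pieri coefficients (products of linear-in-$\alpha$ factors built from $a_{\Lambda^\circledast}$, $l_{\Lambda^*}$ and their counterparts) against the telescoping ratios of the products over $\tilde{\mathcal S}\Lambda=\Lambda^*/\delta_m$ and over $\mathcal B\Lambda$. This is structurally the identity behind Stanley's evaluation of ordinary Jack polynomials, now with the extra bookkeeping forced by the circled cells, by $\delta_m$ versus $\delta_{m+1}$, and by the Grassmann signs; everything preceding it --- the two observations, the monomial computation of $\phi$, and the identification of the relevant $\Omega$ --- is routine once the simplified dominance order and the Pieri-type rule of the earlier sections are in hand.
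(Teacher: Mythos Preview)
Your proposal stops short of a proof: you reduce the statement to a summation identity (or, in the alternative, to checking that the conjectured product satisfies a recursion built from the Pieri relations), and you explicitly flag this verification as ``the main obstacle'' without carrying it out. That is the whole content of the theorem. In addition, your alternative route has a technical flaw: the intertwining $\phi(gf)=(g|_{x_N=0})\,\phi(f)$ holds only for purely bosonic $g$, whereas the proof of Theorem~\ref{mainT} uses in an essential way the fermionic one-row $J_{\tilde s}$ (see \eqref{eqdecomp1} and Lemma~\ref{LemmaJn}); those relations do not transport under $\phi$ in the way you claim, so the recursion you write down is not available for the cases that matter.

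The paper's argument bypasses the summation entirely by a divisibility-plus-degree argument in the variable $N$, and this is the idea you are missing. Working in the $J$-normalization, one shows (from Lemma~\ref{lemma19} and the normalization~\eqref{eqnormalJ}) that $\Spt_{N,m}[J_\La]$ is a monic polynomial in $N$ of degree $|\La|-m(m-1)/2$. The skew expansion of Proposition~\ref{PropSkew}, applied to split off the single pair $(x_N,\theta_N)$ rather than expanding the whole of $F_\La$ in Jack superpolynomials, gives
\[
\Spt_{N,m}[J_\La]=\sum_\Om k_\Om(\alpha)\,\Sp_{N-1,m-1}[J_\Om]\,\bigl[\partial_{\theta_N}J_{\La/\Om}(x_N,\theta_N)\bigr]_{x_N=0}.
\]
The one-variable factor survives only when $J_{\La/\Om}$ contains $J_{\tilde 0}$, i.e.\ when $\La/\Om$ is a horizontal $\tilde 0$-strip (Proposition~\ref{PropgII}); diagrammatically, $\Om$ is $\La$ with one circle removed. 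For every such $\Om$ one has $\tilde{\mathcal S}\La\subseteq \mathcal S\Om$, so Theorem~\ref{mainT} gives that each $\Sp_{N-1,m-1}[J_\Om]$, and hence $\Spt_{N,m}[J_\La]$, is divisible by $\tilde b_\La^{(\alpha,N)}$. Since $\tilde b_\La^{(\alpha,N)}$ is also monic in $N$ of the same degree, equality follows, and dividing by $v_\La(\alpha)$ yields the $P$-version stated in Theorem~\ref{mainT2}. No branching coefficients $d_{\La\Om}$ and no summation identity are needed.
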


Let us emphasis some unusual aspects of the evaluation.
We first stress that in the evaluation  of a superpolynomial, only the commuting variables $x_i$ are specialized at 1. Clearly, the anticommuting variables cannot  be set equal to a common anticommmuting value since
 every fermionic monomial of degree larger than 1 would then vanish.
The necessity of factorizing a Vandermonde determinant is also easily understood.
A homogeneous symmetric superpolynomial is of the form \eqref{Eqformsuperpoly} where $f_{i_1\cdots i_m}$ is antisymmetric with respect to $x_{i_1},\cdots,x_{i_m}$ so that these variables cannot be set equal to 1
without causing the direct vanishing of the whole expression. Hence, before specializing each term,  one has to factorize its antisymmetric core, that is, divide it by a Vandermonde determinant of order $m$.

\subsection{{Organization of the article}}
Before plunging into the different steps leading to
the proof of Theorems \ref{mainT} and \ref{mainT2},
we need to review further results concerning superpartitions and symmetric superpolynomials. This is the subject of Section 2.
The derivation of the evaluation formula relies on establishing in Section \ref{SLR} the necessary conditions for the non-vanishing of the Pieri-type  coefficients. The relevant results in that regard are Propositions~\ref{PropgI} and \ref{PropgII} (proved in Appendix~\ref{appenB}).

Another required new tool is what might be called the analogue of the ``column-by-column'' decomposition of a Jack polynomial (cf. \cite[Prop. 5.1]{Stan}).
In the present context, where a column might be either fermionic or bosonic, this requires the introduction of two distinct operations described in
Section~\ref{facpro}: the stripping of a bosonic column and the
transmutation of a fermionic column into a bosonic one
{(see Figure~\ref{FigC}).
At the core of these column decompositions is the following remarkable
property:
removing/transmuting a leftmost column of a Jack superpolynomial in the right number of variables generates another  Jack superpolynomial, up to a proportionality factor in the non-monic case.
These factors are the building blocks of the expression for the combinatorial norm as shown in Section \ref{Snorm}. Such  proportionality factors,
being the ratio of two polynomials, are most readily computed when
the polynomials are specialized to particular values of their variables.

The proof of the evaluation formula given in Theorem \ref{mainT} is presented in Section \ref{spe1}.
As explained above, before implementing the evaluation, one must first divide by a Vandermonde determinant of order $m$. Remarkably, when $m>0$
this order can be reduced from $m$ to $m-1$, which leads to the
second non-trivial evaluation formula given in Theorem~\ref{mainT2} and
whose
proof
is
presented in Section~\ref{spe2}.

As an aside, we mention that before obtaining the evaluation formula
\eqref{evformula} expressed in terms of skew diagrams,
a quite different-looking version
had been obtained by experimentation.
Since this might be of independent interest, it is presented in
Appendix~\ref{appD},
where the connection between the two formulas is also sketched.

{
Finally, it
should be clear from the remark following Theorem \ref{mainT2} that the
evaluation formula \eqref{evformula}
for Jack polynomials in superspace is actually an
evaluation formula for ordinary Jack polynomials with mixed symmetry
(or with prescribed symmetry in the terminology of \cite{BDF,Baker,McAnally}).
This implies that our evaluation formula must agree with the
one presented in  \cite[Prop. 3.6]{McAnally} (yet another expression is given in \cite{Dunkl98}).
It is remarkable that the very complicated looking-form of the latter
evaluation formula can be reexpressed in the simple form presented here.}

\begin{acknow}
 This work was  supported by the Natural Sciences and Engineering Research Council of Canada; the
Fondo Nacional de Desarrollo Cient\'{\i}fico y
Tecnol\'ogico de Chile [\#1090034 to P.D., \#1090016 to L.L.]; and the Comisi\'on Nacional de Investigaci\'on Cient\'ifica y Tecnol\'ogica de Chile [Redes De Colaboraci\'on RED4,
 Anillo de Investigaci\'on ACT56 Lattices and Symmetry].
\end{acknow}

\section{Definitions} \label{sectintro}
Let us first recall some definitions
related to partitions \cite{Mac}.
A partition $\lambda=(\lambda_1,\lambda_2,\dots)$ of degree $d$
is a vector of non-negative integers such that
$\lambda_i \geq \lambda_{i+1}$ for $i=1,2,\dots$ and such that
$\sum_i \lambda_i=d$.  The length $\ell(\lambda)$
of $\lambda$ is the number of non-zero entries of $\lambda$.
Each partition $\lambda$ has an associated Ferrers diagram
with $\lambda_i$ lattice squares in the $i^{th}$ row,
from the top to bottom. Any lattice square in the Ferrers diagram
is called a cell, where the cell $(i,j)$ is in the $i$th row and $j$th
column of the diagram.  Given  a partition $\lambda$, its
conjugate $\lambda'$ is the diagram
obtained by reflecting  $\lambda$ about the main diagonal.
Given a cell $s=(i,j)$ in $\lambda$, we let
\begin{equation}
a_{\lambda}(s)=\lambda_i-j\, , \qquad   a'_{\lambda}(s)=j-1 \, , \qquad
l_{\lambda}(s)=\lambda_j'-i \, ,   \quad  {\rm and} \quad l_{\lambda}'(s)=i-1  \, .
\end{equation}
The quantities $a_{\lambda}(s),a_{\lambda}'(s),l_{\lambda}(s)$ and $l_{\lambda}'(s)$
are respectively called the arm-length, arm-colength, leg-length and
leg-colength.  For instance, if $\lambda=(8,5,5,3,1)$
\begin{equation}
{\tableau[scY]{&&&&&&&\\&&&&\\&&&&\\& &  \cr \cr }}
\end{equation}
we have that $a_{\lambda}(3,2)=3,a_{\lambda}'(3,2)=1,
l_{\lambda}(3,2)=1$ and $l_{\lambda}'(3,2)=2$.
We say that the diagram $\mu$ is contained in $\la$, denoted
$\mu\subseteq \la$, if $\mu_i\leq \la_i$ for all $i$.  Finally,
$\la/\mu$ is a horizontal (resp. vertical) $n$-strip if $\mu \subseteq \lambda$, $|\lambda|-|\mu|=n$,
and the skew diagram $\la/\mu$ does not have two cells in the same column
(resp. row).

We now review some basic results concerning superpartitions and the objects for which they provide the proper labeling, namely the
symmetric polynomials in superspace. The functions of interest here are the superspace version of the Jack polynomials, which are
introduced in Section~\ref{Ssjack}.
This material is essentially lifted from \cite{DLMnpb,DLMcmp2,DLMjalgcomb,DLMadv}.

\subsection{Operations on superpartitions}\label{Spp}
Let us first go back to Definition \ref{defsuperpart}.
 When considering the superpartition $\Lambda$ as a diagram such as
in Figure 1,
$\Lambda^\circledast$ corresponds to the diagram obtained by replacing the circles in $\Lambda$ by cells.  Similarly, $\Lambda^*$ corresponds to the diagram obtained by removing all the circles in $\Lambda$.
This allows us to consider the circled star $\circledast$ and the star $*$ as
operations on superpartitions (see Figure 3).
  \begin{figure}[h]
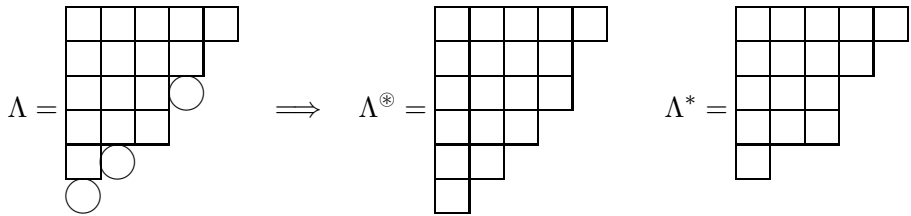
\caption{Operations $\circledast$ and $*$ on a superpartition $\La$}
\begin{equation*}
\La={\tableau[scY]{&&&&\\&&&\\&&&\bl\tcercle{}\\&&\\&\bl\tcercle{}\\ \bl\tcercle{}}}\quad\Longrightarrow\quad
 \La^\circledast={\tableau[scY]{&&&&\\&&&\\&&&\\&&\\&\\ &\bl}}\qquad \La^*= {\tableau[scY]{&&&&\\&&&\\&&&\bl\\&&\\
    &\bl\\ \bl}}
\end{equation*}
\end{figure}

The bosonic
degree $|\La|$ of the superpartition $\La$
is equal to $|\Lambda^*|$ (the number of cells in the diagram
of $\Lambda^*$).  The fermionic degree
$\underline{\overline{\Lambda}}$ of $\Lambda$
is the number of circles in the diagram of $\Lambda$, that is,
$\underline{\overline{\Lambda}}=|\Lambda^{\circledast}|-|\Lambda^*|$.
We say that $\La$ is
a superpartition of degree $(n|m)$ if
$|\La|=n$ and $\La$ has fermionic degree
$m$.
The length $\ell(\Lambda)$ of the superpartition $\Lambda$
is equal to the length of $\Lambda^{\circledast}$ (the number of rows
in the diagram of $\Lambda^{\circledast}$).

Though very practical for many purposes, such as to
define the dominance order \eqref{eqdeforder1} on superpartitions,
Definition ~\ref{defsuperpart}  turns out to be less effective when  working directly on symmetric superpolynomials.   This is why we shall occasionally return to the original definition of a superpartition given in \cite{DLMnpb}.
\begin{definition}\label{defsuperpart2} A superpartition $\La$ of length $\ell$ is
a pair of partitions $(\La^a; \La^s)$, the first one of which contains at most one 0 and does
not have repeated entries.  Explicitly,
\begin{equation}\label{sppa}
\La=(\La^{a};\La^{s})=(\La_1,\ldots,\La_m;\La_{m+1},\ldots,\La_\ell),
\end{equation}
where
\begin{equation}\label{sppb}
\La_1>\ldots>\La_m\geq0 \quad  \text{ and}
\quad \La_{m+1}\geq \La_{m+2} \geq \cdots \geq
\La_\ell > 0 \, .\end{equation}
\end{definition}
\noindent Note that $m$ corresponds in this definition to
the fermionic degree of $\Lambda$. When
$m=0$, we simply omit the semi-column in $\La=(\emptyset;\La^{s})$
and identify $\La$ with $\Lambda^s$.

The equivalence between the two definitions is quite obvious: {the parts of $\La$ that belong to $\La^a$ are the parts of $\La^*$ such that
$\La^\circledast_k-\La^*_k=1$.}
Going back to the example given in Figure 1, we see that if $\La$ is such that $\La^\circledast=(4,3,3,1,1)$  and $\La^*=(3,3,2,1)$, then we have $\Lambda=(3,2,0;3,1)$.
It is clear that $a:\, \La\mapsto\La^a $ can be viewed as a map that sends superpartitons of degree $(n|m)$ to strictly decreasing partitions of length $m$ and with at most one part equal to zero.  In the same vein, $s:\,\La\mapsto\La^s$ maps superpartitions of degree $(n|m)$ end length $\ell$ to partitions of length $n-m$.

 We finally define an important involution on the set of superpartitions:
the conjugation. It is actually simpler to define the conjugation {diagrammatically}:  the  conjugate of a
superpartition $\La$, denoted by $\La'$, is the superpartition whose
diagram is obtained by reflecting the
diagram of $\La$ with respect to the main diagonal.
As shown in Figure 4, reflecting for instance the diagram of $\La=(3,1,0;5,4,3)$ gives $\La'=(5,4,2;4,1)$.

\begin{figure}
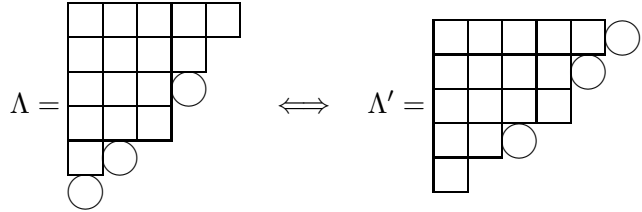
\label{figconj}\caption{Conjugation map on a superpartition $\Lambda$}
\begin{equation*} \label{exdia2}
\La={\tableau[scY]{&&&&\\&&&\\&&&\bl\tcercle{}\\&&\\&\bl\tcercle{}\\ \bl\tcercle{}}}\quad \Longleftrightarrow \quad \La'={\tableau[scY]{&&&&&\bl\tcercle{}\\&&&&\bl\tcercle{}\\&&&\\&&\bl\tcercle{}\\&\bl
}}
\end{equation*}
\end{figure}

\subsection{Classical bases for symmetric polynomials in superspace}\label{spins}As already mentioned in the introduction, a polynomial in superspace (a superpolynomial for short) is a  polynomial in $x=(x_1,\ldots,x_N)$ and $\theta=(\theta_1,\ldots,\theta_N)$, where $\theta$ denotes a set of $N$ Grassmann variables. Rephrasing the information contained in \eqref{eqdefsymsuperpoly}, a polynomial in superspace
is said to be symmetric if it is invariant under the simultaneous interchange of $x_i\lrw x_j$ and $\ta_i\lrw\ta_j$ for all $i,j$.
The set of all homogeneous symmetric polynomials of degree $n$ in $x$ and degree $m$ in $\theta$, forms a
vector space $\mathscr{R}_N^{n,m}$ over $\mathbb{Q}(\alpha)$.

In what follows, we adopt the notation of Definition \ref{defsuperpart2}
and suppose
that the superpartition $\Lambda=(\La_1,\ldots,\La_m;\La_{m+1},\ldots,\La_\ell)$
is of
degree $(n|m)$ and length $\ell$.  We always assume $\ell\leq N$.  In the case where $\ell<N$, we set:
\begin{equation}\La_{\ell+1}=\ldots=\La_N=0.\end{equation}

A simple basis for the space $\mathscr{R}_N^{n,m}$  is furnished by the following extension to superspace of the usual monomial symmetric functions $m_\la(x)$:
\begin{equation} \label{eqmono}
m_\La(x;\theta)=\frac{1}{n_\La!}\sum_{\sigma \in S_N} \theta_{\sigma(1)}\cdots\theta_{\sigma(m)} x_{\sigma(1)}^{\La_1}\cdots x_{\sigma(N)}^{\La_N} \, ,
\end{equation}
where $\La$ is a superpartition of degree $(n|m)$ and of length $\ell(\La)\leq N$,  and
\begin{equation}n_\La!:=\prod_{i\geq1}n_{\La^s}(i)!,\end{equation}
with $n_{\La^s}(i)$
being the number of parts in $\La^s=(\La_{m+1},\ldots,\La_N)$
that are equal to $i$.
The factor $n_\La!$ {is introduced to guarantee
that  the distinct non-symmetric monomials} of the form $\theta_{i_1}\cdots\theta_{i_m}x_{i_1}^{\La_1}\cdots x_{i_N}^{\La_N}$ appear in $m_\La$ with coefficients
equal to $\pm 1$.  As explained in the introduction, the set of all monomials $\displaystyle m_\La=\lim_{\longleftarrow}m_\La(x;\theta)$ forms a linear basis for the algebra $\mathscr{R}$ of symmetric superfunctions.

Another basis of symmetric superpolynomials is given by the power-sums
\begin{equation}
p_\La(x,\theta):=\tilde{p}_{\La_1}(x, \ta)\cdots\tilde{p}_{\La_m}(x, \ta)\, p_{\La_{m+1}}(x)\cdots p_{\La_\ell}(x),\end{equation}
 where
\begin{equation} \label{scalarpp}
\tilde{p}_n(x, \ta):=\sum_i\theta_ix_i^n\qquad\text{and}\qquad p_n(x):=
\sum_ix_i^n \, .
\end{equation}
Now let $p_\La$ denote the inverse limit of the superpolynomial $p_\La(x,\theta)$.   Then $\mathscr{R}$ is equal to
${\rm span}_{\mathbb Q(\alpha)}\{p_\La:\, \La \text{ is a superpartition}\}$.
The relevance of the power sums $p_\La $ in this article
is rooted in the natural scalar product on $\mathscr{R}$ defined as
\begin{equation} \label{defscalprodcombI} \LL \,
{p_\La} \, | \, {p_\Om }\, \RR= {(-1)^{\binom{m}{2}} }\, \alpha^{{\ell}(\La)}\, z_{\La^s}
\delta_{\La,\Om}\,,
 \end{equation}
 where
\begin{equation}
z_{\La^s}=\prod_i i^{n_{\La^s}(i)} {n_{\La^s}(i)!}\, .
\end{equation}
The sign $(-1)^{\binom{m}{2}}$ arises in all scalar products of
symmetric superfunctions of fermionic degree $m$.   It is thus convenient
to define:
\begin{equation}
\overleftarrow{F} = (-1)^{\binom{m}{2}} F
\end{equation}
on any homogeneous superfunction
 $F $ of fermionic degree $m$.  In fact, the left-arrow is the involution in the Grassmann algebra generated by $\theta$ that reverses the order of the variables
 $\theta$, that is, $ \overleftarrow{\theta_{i_1}\cdots \theta_{i_m}}=\theta_{i_m}\cdots\theta_{i_1}$.
For cosmetic reasons, we also introduce
\begin{equation}
\overrightarrow{F } =  F
\end{equation}
which allows to {write} the scalar product \eqref{defscalprodcombI}
in a symmetrical fashion:
\begin{equation} \label{defscalprodcomb} \LL \,
\overleftarrow{p_\La} \, | \, \overrightarrow{p_\Om }\, \RR=\alpha^{{\ell}(\La)}\, z_{\La^s}
\delta_{\La,\Om}\,.
 \end{equation}

\def\vs{\varsigma}

We shall also make use of the elementary superpolynomials $e_\Lambda(x,\theta)$, which provide another  multiplicative basis for $\mathscr{R}^{n,m}_N$
  They are defined as follows:
\begin{equation}
e_\La (x, \ta):=\tilde{e}_{\La_1}(x, \ta)\cdots\tilde{e}_{\La_m}(x, \ta)\,e_{\La_{m+1}}(x)\cdots e_{\La_\ell}(x),\end{equation}
where $\Lambda$ is again a superpartition of fermionic degree $m$ and length
$\ell(\La)= \ell$,
and
where
\begin{equation}
\tilde{e}_n(x, \ta):= m_{(0;1^n)}(x,\ta) \qquad\text{and}\qquad e_n(x):=
m_{(1^n)}(x).\end{equation}

\subsection{Jack polynomials in superspace}\label{Ssjack}

The basis of symmetric polynomials in superspace of concern here
 is the
 generalization of the Jack polynomials. They
are most naturally defined as solutions of a double eigenvalue problem
\cite{DLMcmp2,DLMadv}.  Theorems 22 and 31 in \cite{DLMcmp2} together with the discussion in Appendix \ref{appen1} readily establish the following.

\begin{theorem}\label{TheoEigenJack}Let $D$ and $\Delta$
be the two following algebraically
independent and commuting differential operators:
 \begin{equation}
 D= \frac{1}{2}\sum_{i=1}^N  \alpha x_i^2\partial_{x_i}^2
+\sum_{1 \leq i\neq j \leq N}\frac{x_ix_j}{x_i-x_j}\left(\partial_{x_i}-\frac{\theta_i-\theta_j}{x_i-x_j}\partial_{\theta_i}\right),
\end{equation}
 and
 \begin{equation}
 \Delta= \sum_{i=1}^N \alpha x_i\theta_i\partial_{x_i}\partial_{\theta_i}+
\sum_{1 \leq i\neq j \leq N}
\frac{x_i\theta_j+x_j\theta_i}{x_i-x_j}\partial_{\theta_i}. \end{equation}
Let also
\begin{equation}\label{Dvap}
\varepsilon_\La(\alpha)= \alpha b(\La')-b(\La),\end{equation}
where $b(\La)=\sum_{i=1}^{\ell(\La^*)}(i-1)\La^*_i$. Finally, let
 \begin{equation}\label{epsi}  \qquad\epsilon_\La(\alpha)=\alpha|\La^a|-|{\La'}^a|.\end{equation}  Then, there exists a unique  monic
symmetric polynomial in superspace,
\begin{equation} \label{Ptriangular}
P_\La(x,\theta)=m_\La(x,\theta)+\sum_{\Om<\La}c_{\La\Om}(\alpha)\,m_\Om(x,\theta),
\end{equation}
satisfying
\begin{equation}
D\,P_\La(x,\theta)=
\varepsilon_{\La}(\alpha)\,P_\La(x,\theta)\qquad \text{and}\qquad \Delta\,P_\La(x,\theta)=\epsilon_\La(\alpha)\,P_\La(x,\theta).
\end{equation}
\end{theorem}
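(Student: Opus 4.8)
The plan is to derive the statement by unpacking what \cite{DLMcmp2} (and \cite{DLMadv}) already provide and supplying one extra ingredient concerning the ordering. From \cite[Theorems~22 and~31]{DLMcmp2} one extracts the following facts: $D$ and $\Delta$ each preserve the finite-dimensional space $\mathscr{R}_N^{n,m}$; the Jack superpolynomials constructed there are their simultaneous eigenfunctions, with eigenvalues that one checks to equal $\varepsilon_\La(\alpha)$ and $\epsilon_\La(\alpha)$ of \eqref{Dvap} and \eqref{epsi}; each such eigenfunction is monic and of the form $m_\La+\sum_{\Om}c_{\La\Om}(\alpha)\,m_\Om$ with the sum restricted to $\Om\trianglelefteq\La$ in the order \eqref{eqorder2}; and --- the feature that makes the triangular system nondegenerate --- the pair $\bigl(\varepsilon_\Om(\alpha),\epsilon_\Om(\alpha)\bigr)$ separates $\trianglelefteq$-comparable superpartitions. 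Consequently, existence and uniqueness of a monic simultaneous eigenfunction that is triangular with respect to $\trianglelefteq$ are already in hand; the only gap between that and the statement is the passage from the weak order \eqref{eqorder2} to the strong order \eqref{eqdeforder1}.

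Closing that gap is precisely the content of Appendix~\ref{appen1}, and it is where the real work lies. What must be shown is that $c_{\La\Om}(\alpha)=0$ unless $\Om^*\le\La^*$ \emph{and} $\Om^\circledast\le\La^\circledast$ in the dominance order on partitions; since $\trianglelefteq$ already forces $\Om^*\le\La^*$, the missing information concerns $\Om^\circledast$ in the regime $\Om^*\ne\La^*$. I would attack this along one of two lines. The first is to prove directly that $D$ and $\Delta$ are triangular with respect to \eqref{eqdeforder1} in the monomial basis: one expands $D\,m_\Om$ and $\Delta\,m_\Om$, uses that the purely bosonic pieces act as in the classical Jack case and lower the dominance order, and then checks that the fermionic correction $-\tfrac{\theta_i-\theta_j}{x_i-x_j}\partial_{\theta_i}$ in $D$ and the cross term $\tfrac{x_i\theta_j+x_j\theta_i}{x_i-x_j}\partial_{\theta_i}$ in $\Delta$ do not violate the $\circledast$-part of the ordering; once this is established, the triangular recursion that produces $P_\La$ automatically stays inside the $\le$-ideal below $\La$. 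The second is to exploit the decomposition \eqref{Eqformsuperpoly}, identify $(P_\La)_{1,\dots,m}(x)$ divided by $\prod_{1\le i<j\le m}(x_i-x_j)$ with a Jack polynomial with prescribed symmetry \cite{BDF,Baker,McAnally}, and transport the known triangular expansion of the latter back to a statement on which superpartitions $\Om$ may occur in $P_\La$. I expect this order refinement to be the main obstacle; the delicate point, in either approach, is that neither fermionic term manifestly respects the $\circledast$-comparison, so one must show that the combined action does.

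Granting the refinement, the theorem follows at once. The simultaneous eigenfunction supplied by \cite{DLMcmp2}, now known to be triangular with respect to \eqref{eqdeforder1}, is a polynomial of the form \eqref{Ptriangular} with the required eigenvalues, which settles existence. For uniqueness, suppose $Q=m_\La+\sum_{\Om<\La}d_{\La\Om}(\alpha)\,m_\Om$ also satisfies $D\,Q=\varepsilon_\La(\alpha)\,Q$ and $\Delta\,Q=\epsilon_\La(\alpha)\,Q$. Since $\Om<\La$ in \eqref{eqdeforder1} implies $\Om\triangleleft\La$ in \eqref{eqorder2}, $Q$ is in particular monic and $\trianglelefteq$-triangular, hence coincides with $P_\La$ by the uniqueness already obtained for the weak order. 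Equivalently, $P_\La-Q$ lies in the joint $\bigl(\varepsilon_\La(\alpha),\epsilon_\La(\alpha)\bigr)$-eigenspace, is supported on monomials $m_\Om$ with $\Om\triangleleft\La$, and therefore vanishes by the eigenvalue-separation property: picking a $\trianglelefteq$-maximal $\Om$ in its support and reading off the coefficient of $m_\Om$ after applying $D$ would force $\varepsilon_\Om(\alpha)=\varepsilon_\La(\alpha)$ and, similarly with $\Delta$, $\epsilon_\Om(\alpha)=\epsilon_\La(\alpha)$, contradicting separation. This establishes the theorem.
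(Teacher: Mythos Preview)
Your proposal is correct and follows essentially the same route as the paper: reduce the statement to the existing \cite{DLMcmp2} result for the weaker order $\trianglelefteq$, and then prove the order refinement of Appendix~\ref{appen1}. The paper carries out your first line of attack, with two economies worth noting: it checks triangularity with respect to $\leq$ only for the single operator $\mathcal{I}$ (the shifted form of $\Delta$), since the $\mathcal{I}$-eigenvalues already separate $\trianglelefteq$-comparable superpartitions; and the verification that $\mathcal{I}\,m_\La$ produces only $m_\Om$ with $\Om^{\circledast}\le\La^{\circledast}$ is reduced to an explicit two-variable computation of $\frac{x_2}{x_1-x_2}(1-K_{12})\,x_1^a x_2^b$, rather than a general analysis of the fermionic cross terms.
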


 As is the case for the Jack polynomials,
the coefficients $c_{\La\Om}$ in the expansion \eqref{Ptriangular} of $P_{\La}$ do not depend on the number of variables.
It then easily follows that the $P_\La$'s behave well under the obvious extension $\rho_{N,N-1}:\, \mathscr{R}^{n,m}_N \to \mathscr{R}^{n,m}_{N-1}$ of the standard  homomorphism that restricts the number of variables
(see \cite[p. 18]{Mac} and the introduction).
This allow us to take the inverse limit of any Jack superpolynomial and obtain a {\it Jack symmetric superfunction $P_\La$}.  In other words, it makes sense to work with  $P_\La$ even if the  latter contains an infinite number of variables since it is equal to a finite and stable sum of monomials $m_\La$.

As mentioned in the introduction, Jack superfunctions have been shown to be
orthogonal with respect to the scalar product (\ref{defscalprodcomb}) \cite{DLMadv}:
\begin{equation} \label{ortho} \LL \olw{P_\La} \, | \, \orw{P_\Om }\RR=
 \, \|P_\Lambda \|^2
\delta_{\La,\Om},\end{equation}
with $\|P_\Lambda \|^2\neq 0$ a certain rational function in $\alpha$
that does not depend on the number of variables $N$
(the non-vanishing of $\|P_\Lambda \|^2$ follows from the fact that
the scalar product \eqref{defscalprodcomb} is
positive definite when $\alpha > 0$).

Directly related to {this} orthogonality relation, we have the Cauchy formula
\begin{equation}
\prod_{i,j}\frac{1}{(1-x_iy_j-\theta_i\phi_j)^{1/\alpha}}\,
=\sum_{\La}\frac{1}{\|P_\Lambda \|^2}
\olw{P_\La}\!(x,\theta)\orw{P_\La}\!(y,\phi)\,,
\end{equation}
where $\|P_\Lambda \|^2$ was defined in \eqref{ortho}.

We conclude this section by mentioning  a
useful duality property of $P_\La$.
Let $\hat \omega_{\alpha}$ stand for the endomorphism of the space
of symmetric polynomials in superspace
defined on the power
sums as
\begin{equation}
\hat \omega_{\alpha} (p_n) = (-1)^{n-1} \alpha \, p_n \qquad {\rm and}
\qquad \hat \omega_{\alpha} (\tilde p_n) = (-1)^{n} \alpha \, \tilde p_n.
\end{equation}
It was shown in \cite[Theo. 27]{DLMadv} that
\begin{equation}\label{dual}
\hat \omega_{\alpha} (\orw{P_{\La} })
= \|P_\Lambda \|^2\, \olw{P_{\La'}}^{(1/\alpha)},
\end{equation}
where $P_{\La'}^{(1/\alpha)}$ stands for $P_{\La'}$ with $\alpha$ replaced
by $1/\alpha$.

\subsection{{Complementary remarks on the eigenfunction characterization of the Jack superpolynomials}}

For completeness,
we provide some clarification comments on the description of the Jack superpolynomials as   eigenfunctions of a quantum $N$-body problem.   {In that regard, we clear up some discrepancies between the notations used in the current paper and those of previously-quoted articles.} None of these comments is used in the rest of the article, so that this subsection can be safely skipped.

The version of Theorem~\ref{TheoEigenJack} presented in
\cite[Theo. 14]{DLMadv}
differs slightly from the one presented here.  The eigenvalue problem
in \cite{DLMadv} is given in terms of operators $\mathcal H$
and $\mathcal I$ that are related to $D$ and $\Delta$ through the
relations
\begin{equation}\label{DDrel}
D = \frac{1}{2}\alpha {\mathcal{H}}-\frac{(\alpha+N-1)}{2} {\mathcal H}_1
\qquad
{\rm and} \qquad
\Delta= \alpha \mathcal{I} +\frac{1}{2} ({\mathcal I}_0^2-{\mathcal I}_0),
\end{equation}
where
\begin{equation}\label{h1etio} {\mathcal H}_1=\sum_{i=1}^N x_i \partial_{x_i}\qquad
{\rm and} \qquad{\mathcal I}_0 =  \sum_{i=1}^N \theta_i \partial_{\theta_i}.
\end{equation}  Given that ${\mathcal H}_1$ and
${\mathcal I}_0$ are constant on polynomials in superspace of a given fermionic
and bosonic degree, the theorem still holds (after an obvious modification
of the eigenvalues).

In the $\theta\rightarrow0$ limit, $D$ becomes the operator $D^{\text{S}}$
used in  \cite[Eq. 11]{Stan}, up to a rescaling and minor modifications
that remove
the dependence upon $N$ in the eigenvalues:
\begin{equation}
\lim_{\ta\to0}D=2D^{\text{S}}-2(N-1){\mathcal H}_1,
\end{equation}
where ${\mathcal H}_1$ is defined in (\ref{h1etio}).
In physics, $D$ is interpreted as the Hamiltonian (energy operator).
The operator $\Delta$ is a conserved quantity that disappears in
the non-supersymmetric case (i.e., when $\theta_i\rw 0$).

The Hamiltonian and the conserved quantity just mentioned  refer to the  supersymmetric extension of the trigonometric  Calogero-Moser-Sutherland (stCMS) model \cite{SS,BTW,DLMnpb}.
Let us digress for a moment to comment on this a priori curious situation that eigenfunctions of the stCMS Hamiltonian only (that is, the eigenfunctions of $D$) can fail to be orthogonal. It is clear form \eqref{Dvap} that the Hamiltonian eigenvalues are insensitive to the fermionic or bosonic nature of the parts in the superpartition parametrizing the eigenfunction. There is thus a residual degeneracy. The way to lift this degeneracy is, however, clear from the point of view of integrable systems. Recall that the usual trigonometric CMS model, being integrable, has $N$ (the number of degrees of freedom) independent and mutually commuting conservation laws -- the Hamiltonian being one of them. But since the stCMS model has $2N$ degrees of freedom, it must have an extra set of $N$ commuting conservation laws -- disappearing when the anticommuting variables are set equal to zero. Select the first non-trivial representative of this second tower, called the partner Hamiltonian (this is essentially $\Delta$). The common eigenfunctions of the Hamiltonian and its partner turn out to be have non-degenerate eigenvalues; in addition, they are orthogonal  \cite{DLMcmp2}. These are the Jack polynomials in superspace.
}

\section{Linear expansion of products of Jack superpolynomials}\label{SLR}

{As a preliminary step toward the derivation of} the evaluation formula for the Jack superpolynomials,
{the following two technical problems must be addressed:}
\begin{enumerate}
\item {Identify} the Jack superpolynomials that can appear
in the expansion of $P_{R} \cdot P_\La$, {where} $R$ is a single {row  (bosonic or fermionic) superpartition.}
\item {Obtain} $P_\La(x_1,x_2,\ldots,y_1,y_2\ldots;\theta_1,\theta_2,\ldots,\phi_1,\phi_2,\ldots)$ as a linear combination of Jack superpolynomials in $x$ and $\theta$ with coefficients in $y$, $\phi$, and $\alpha$.
\end{enumerate}
In Sections \ref{sectgenvc} and \ref{sectstrips} below,  we   address the first {point} by carefully studying the coefficients $g_{\Om,\Gamma}^\La$, which are defined as
the rational function in $\alpha$ satisfying
\begin{equation}\label{EqProdJ}
P_\Om \, P_\Gamma= \sum_\La \frac{1}{{\|P_\La\|^2}} \,g_{\Om,\Gamma}^\La\, P_\La.
\end{equation}
By orthogonality, the latter equation is equivalent to
\begin{equation}
g_{\Om,\Gamma}^\La =\LL \olw{P_\La}   |  \orw{P_\Om}\orw{P_\Gamma} \RR.\end{equation}
{The second issue is  considered}
 in Section \ref{Sskewjack}, where we  introduce skew Jack polynomials in superspace by using the coefficients
$g_{\Om,\Gamma}^\La$.

\subsection{Necessary conditions for the non-vanishing of
$g_{\Om,\Gamma}^\La$} \label{sectgenvc}
The following lemma is an immediate consequence of the duality \eqref{dual} induced by $\omega_{\alpha}$.
\begin{lemma} \label{lemmaequivconj}
We have that
\begin{equation}\label{lessi}
g^{\La}_{\Om \Gamma}\neq 0\quad\text{ if and only if }
\quad g^{\La'}_{\Om' \Gamma'}\neq 0.
\end{equation}
\end{lemma}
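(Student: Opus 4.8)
The plan is to exploit the duality relation \eqref{dual}, which says that $\hat\omega_\alpha(\orw{P_\La})=\|P_\La\|^2\,\olw{P_{\La'}}^{(1/\alpha)}$, together with the fact that $\hat\omega_\alpha$ is (after the replacement $\alpha\mapsto1/\alpha$) an algebra isomorphism of $\mathscr{R}$. First I would apply $\hat\omega_\alpha$ to both sides of the product expansion \eqref{EqProdJ} written for $P_\Om\,P_\Gamma$. Since $\hat\omega_\alpha$ is defined multiplicatively on the power sums and $\mathscr{R}$ is the polynomial algebra they generate, it is an algebra homomorphism up to the sign bookkeeping encoded in the left/right arrows; concretely $\hat\omega_\alpha(\orw{P_\Om}\,\orw{P_\Gamma})$ should be rewritten using \eqref{dual} on each factor, producing $\|P_\Om\|^2\|P_\Gamma\|^2$ times a product $\olw{P_{\Om'}}^{(1/\alpha)}\olw{P_{\Gamma'}}^{(1/\alpha)}$, with the arrow signs combining consistently because the fermionic degrees add.

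Next I would re-expand that product of conjugated Jack superfunctions (now in the variable $1/\alpha$) via \eqref{EqProdJ} again, obtaining a sum over $\La'$ of $g^{\La'}_{\Om'\Gamma'}(1/\alpha)\,\olw{P_{\La'}}^{(1/\alpha)}/\|P_{\La'}^{(1/\alpha)}\|^2$. Comparing this with the image under $\hat\omega_\alpha$ of the right-hand side of the original expansion — namely $\sum_\La \frac{1}{\|P_\La\|^2}g^\La_{\Om\Gamma}(\alpha)\,\hat\omega_\alpha(\orw{P_\La})=\sum_\La \frac{1}{\|P_\La\|^2}g^\La_{\Om\Gamma}(\alpha)\,\|P_\La\|^2\,\olw{P_{\La'}}^{(1/\alpha)}$ — and using linear independence of the $P_{\La'}$'s, I read off an explicit relation between $g^\La_{\Om\Gamma}(\alpha)$ and $g^{\La'}_{\Om'\Gamma'}(1/\alpha)$, up to nonzero factors built from the norms $\|P_\Om\|^2,\|P_\Gamma\|^2,\|P_{\La'}^{(1/\alpha)}\|^2$. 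Since all these norms are nonzero rational functions of $\alpha$ (as recorded after \eqref{ortho}), the proportionality factor never vanishes, and hence $g^\La_{\Om\Gamma}\neq0$ precisely when $g^{\La'}_{\Om'\Gamma'}\neq0$, which is the claim.

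The step I expect to require the most care is the sign/arrow bookkeeping: $\hat\omega_\alpha$ acts with signs $(-1)^{n-1}$ on $p_n$ and $(-1)^n$ on $\tilde p_n$, and the duality \eqref{dual} is stated with the overleftarrow on one side, so I must check that multiplicativity holds in the form needed, i.e.\ that $\hat\omega_\alpha(\orw{FG})$ matches $\hat\omega_\alpha(\orw F)\,\hat\omega_\alpha(\orw G)$ after accounting for $\overleftarrow{(\cdot)}=(-1)^{\binom m2}(\cdot)$ on each homogeneous fermionic component; the identity $\binom{m_1+m_2}{2}\equiv\binom{m_1}{2}+\binom{m_2}{2}+m_1m_2\pmod 2$ will enter here. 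Alternatively, and perhaps more cleanly, I could bypass the product expansion entirely and argue directly from the inner-product formula $g^\La_{\Om\Gamma}=\LL\olw{P_\La}\mid\orw{P_\Om}\orw{P_\Gamma}\RR$: applying the fact that $\hat\omega_\alpha$ is (up to $\alpha\leftrightarrow1/\alpha$ and an overall scalar) an isometry-like map intertwining the scalar product with itself, one transports this pairing to $\LL\olw{P_{\La'}}^{(1/\alpha)}\mid\orw{P_{\Om'}}^{(1/\alpha)}\orw{P_{\Gamma'}}^{(1/\alpha)}\RR$ times nonzero scalars, which is exactly $g^{\La'}_{\Om'\Gamma'}$ evaluated at $1/\alpha$ up to nonzero factors. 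Either way the conclusion is immediate once the bookkeeping is settled, so the lemma is genuinely ``an immediate consequence'' as the text asserts, and no case analysis on the shapes of $\La,\Om,\Gamma$ is needed.
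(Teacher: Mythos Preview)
Your proposal is correct and is precisely the argument the paper has in mind: the paper's ``proof'' is the single sentence preceding the lemma stating it is an immediate consequence of the duality \eqref{dual}, and you have simply filled in those details (apply the algebra endomorphism $\hat\omega_\alpha$ to \eqref{EqProdJ}, use \eqref{dual} on each factor, and compare coefficients to relate $g^{\La}_{\Om\Gamma}(\alpha)$ and $g^{\La'}_{\Om'\Gamma'}(1/\alpha)$ up to nonzero norm factors). Your identification of the sign identity $\binom{m_1+m_2}{2}=\binom{m_1}{2}+\binom{m_2}{2}+m_1m_2$ as the only bookkeeping needed is exactly right.
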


Our first non-trivial result on the coefficients $g_{\Om,\Gamma}^\La$ shows
 that they behave quite like their non-superspace counterparts.
It {also neatly} illustrates the efficiency of the ordering
\eqref{eqdeforder1} on
superpartitions.
Recall from \cite[p.5-6]{Mac}
that given two partitions $\lambda$ and $\mu$,
$\lambda \cup \mu$ stands for the partition whose parts are those
of $\lambda$ and $\mu$ arranged in weakly decreasing order,
while $\lambda+ \mu$ stands for the partition whose $i^{th}$ part is
$\lambda_i+\mu_i$.  The two notions are related by the formula
$(\lambda \cup \mu)'=\lambda'+\mu'$.
The following proposition is a direct generalization
of \cite[Prop. 4.1]{Stan}.

\begin{proposition}\label{Propg}
If $g_{\Om,\Gamma}^\La \neq 0$ then
\begin{equation}
\Om^* \cup \Gamma^* \leq \La^*
\leq \Om^* + \Gamma^* \qquad {\rm and} \qquad
 \Om^{\circledast} \cup \Gamma^{\circledast} \leq \La^{\circledast}
\leq \Om^{\circledast} + \Gamma^{\circledast}.
\end{equation}
Moreover, if there exists a superpartition
$\Lambda$ such that
$\La^*= \Om^* \cup \Gamma^*$
and  $\La^{\circledast}=\Om^{\circledast} \cup \Gamma^{\circledast}$,
then   $g_{\Om,\Gamma}^\La \neq 0$. Similarly, if
there exists a superpartition
$\Lambda$
such that
$\La^*= \Om^* + \Gamma^*$
and  $\La^{\circledast} =\Om^{\circledast} + \Gamma^{\circledast}$, then
$g_{\Om,\Gamma}^\La \neq 0$.
\end{proposition}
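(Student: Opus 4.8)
The plan is to follow Stanley's proof of \cite[Prop.~4.1]{Stan}, handling the extra layer carried by $\circledast$ through the definition \eqref{eqdeforder1} of the dominance order and the fermionic layer through the conjugation duality of Lemma~\ref{lemmaequivconj}. The first step is an estimate on products of monomials: for any superpartitions $\Psi,\Phi$, every $m_\Theta$ occurring in $m_\Psi\,m_\Phi$ satisfies $\Theta^*\le\Psi^*+\Phi^*$ and $\Theta^\circledast\le\Psi^\circledast+\Phi^\circledast$. Indeed, by \eqref{eqmono} a term of $m_\Psi\,m_\Phi$ is, up to sign, $\theta_I\,\theta_J\,x^{a}x^{b}$, where $a$ is a rearrangement of $\Psi^*$ carrying its anticommuting parts on the index set $I$ of its $\theta$'s (and likewise $b,\Phi,J$); this is nonzero only when $I\cap J=\emptyset$, and then the associated $\Theta$ has $\Theta^*$ equal to the partition rearrangement of $a+b$ and $\Theta^\circledast$ equal to that of $(a+\mathbf{1}_I)+(b+\mathbf{1}_J)$, where $a+\mathbf{1}_I$ is a rearrangement of $\Psi^\circledast$ and $b+\mathbf{1}_J$ one of $\Phi^\circledast$. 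Since the partition rearrangement of the sum of two compositions is dominated by the sum of their sorted versions, both inequalities follow.

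For the ``only if'' direction: by triangularity \eqref{Ptriangular}, $P_\Om P_\Gamma$ is a combination of products $m_\Psi m_\Phi$ with $\Psi\le\Om$, $\Phi\le\Gamma$, so the previous step and \eqref{eqdeforder1} show that every monomial of $P_\Om P_\Gamma$ is indexed by a superpartition $\Theta$ with $\Theta^*\le\Om^*+\Gamma^*$ and $\Theta^\circledast\le\Om^\circledast+\Gamma^\circledast$. Since these inequalities cut out a lower set for \eqref{eqdeforder1}, inverting the unitriangular expansion of the $P_\Theta$'s in the monomial basis shows that $g^{\La}_{\Om\Gamma}\neq0$ forces $\La^*\le\Om^*+\Gamma^*$ and $\La^\circledast\le\Om^\circledast+\Gamma^\circledast$. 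Applying this to $\Om',\Gamma',\La'$ and using Lemma~\ref{lemmaequivconj} gives $(\La')^*\le(\Om')^*+(\Gamma')^*$ and $(\La')^\circledast\le(\Om')^\circledast+(\Gamma')^\circledast$; since $(\La')^*=(\La^*)'$ and $(\La')^\circledast=(\La^\circledast)'$, since $(\Om^*)'+(\Gamma^*)'=(\Om^*\cup\Gamma^*)'$ and $(\Om^\circledast)'+(\Gamma^\circledast)'=(\Om^\circledast\cup\Gamma^\circledast)'$, and since conjugation reverses dominance on partitions of a fixed size, this becomes $\La^*\ge\Om^*\cup\Gamma^*$ and $\La^\circledast\ge\Om^\circledast\cup\Gamma^\circledast$.

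For the extremality statements, it is enough to treat the ``$+$'' case: if $\La$ realizes $\La^*=\Om^*\cup\Gamma^*$ and $\La^\circledast=\Om^\circledast\cup\Gamma^\circledast$, then $\La'$ realizes $(\La')^*=(\Om')^*+(\Gamma')^*$ and $(\La')^\circledast=(\Om')^\circledast+(\Gamma')^\circledast$, and Lemma~\ref{lemmaequivconj} transports $g^{\La'}_{\Om'\Gamma'}\neq0$ back to $g^{\La}_{\Om\Gamma}\neq0$. So suppose a superpartition $\La$ with $\La^*=\Om^*+\Gamma^*$ and $\La^\circledast=\Om^\circledast+\Gamma^\circledast$ exists. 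By the ``only if'' bound, $\La$ dominates every superpartition indexing a monomial of $P_\Om P_\Gamma$, and, by the monomial estimate, a product $m_\Psi m_\Phi$ with $\Psi<\Om$ or $\Phi<\Gamma$ contributes only monomials indexed by superpartitions strictly below $\La$; hence $[m_\La]\!\left(P_\Om P_\Gamma\right)=[m_\La]\!\left(m_\Om m_\Gamma\right)$. This last coefficient is nonzero: a term of $m_\Om$ whose $x$-exponents form the sorted composition $\Om^*$ and the analogous term of $m_\Gamma$ carry their Grassmann indices on disjoint rows (in each row exactly one of $\Om,\Gamma$ is fermionic, because $\La^{\circledast}_i-\La^{*}_i\in\{0,1\}$), so their product is $\pm$ a monomial occurring in $m_\La$; checking that all pairs contributing to $m_\La$ carry a common sign then shows the coefficient is a positive integer. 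Discarding, via the ``only if'' bound, the terms $P_\Xi$ with $\Xi>\La$ from the inverted unitriangular expansion gives $g^{\La}_{\Om\Gamma}=\|P_\La\|^2\,[m_\La]\!\left(m_\Om m_\Gamma\right)\neq0$.

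The step I expect to be the main obstacle is the non-cancellation claim just used: in contrast to the classical case, $[m_\La](m_\Om m_\Gamma)$ is an alternating sum of $\pm1$ contributions produced by reorderings of the Grassmann variables, and one must exploit the fact that the fermionic rows of $\Om$ and of $\Gamma$ occupy disjoint positions in every contributing pair to conclude that these signs agree.
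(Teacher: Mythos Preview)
Your overall architecture is correct and matches the paper's: establish an upper bound on the monomials appearing in $P_\Om P_\Gamma$ via triangularity and a product estimate, then obtain the lower bound by conjugation duality (Lemma~\ref{lemmaequivconj}) and $(\lambda\cup\mu)'=\lambda'+\mu'$.  Your direct combinatorial proof of the monomial bound (the rearrangement inequality applied to $a+b$ and to $(a+\mathbf 1_I)+(b+\mathbf 1_J)$) is a legitimate and somewhat more elementary alternative to what the paper does.

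The genuine gap is exactly the one you flag: the non-cancellation claim $[m_\La](m_\Om m_\Gamma)\neq 0$ when $\La^*=\Om^*+\Gamma^*$ and $\La^\circledast=\Om^\circledast+\Gamma^\circledast$.  Your sketch (``the fermionic rows of $\Om$ and of $\Gamma$ occupy disjoint positions in every contributing pair, so the signs agree'') is not a proof: there are in general many pairs $(a,I),(b,J)$ with $a+b$ sorting to $\La^*$ and $(a+\mathbf 1_I)+(b+\mathbf 1_J)$ sorting to $\La^\circledast$ (not only the ``sorted'' ones, since repeated parts allow nontrivial rearrangements), and each carries a sign coming both from the permutation inside $m_\Om$, the one inside $m_\Gamma$, and the reordering of $\theta_I\theta_J$.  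Showing these all agree is doable but is real work you have not done.

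The paper sidesteps this entirely by passing to the elementary basis.  From $\overleftarrow{e_{\La'}}=\lim_{\alpha\to 0}\overrightarrow{P_\La}$ and \eqref{Ptriangular} one gets $e_{\La'}=\pm m_\La+(\text{lower})$, hence $m_\La=\pm e_{\La'}+(\text{lower})$.  Expanding $m_\Om m_\Gamma$ in the $e$-basis, the top term is $\pm e_{\Om'}e_{\Gamma'}$.  But the $e$-basis is multiplicative: $e_{\Om'}e_{\Gamma'}$ is either $0$ (if $\Om'$ and $\Gamma'$ share a fermionic row length) or $\pm e_{\Theta'}$ for the unique $\Theta$ with $\Theta^*=\Om^*+\Gamma^*$ and $\Theta^\circledast=\Om^\circledast+\Gamma^\circledast$.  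Under the hypothesis that such a superpartition $\La=\Theta$ exists, this immediately gives $[m_\La](m_\Om m_\Gamma)=\pm 1$, with no sign bookkeeping required.  This single observation closes your argument; it is also how the paper proves the monomial bound itself, so in the paper both the bound and the extremality fall out of the same $e$-basis trick.
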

\begin{proof}  We first prove that
\begin{equation}\label{Eqmm}
m_{\Om} \, m_{\Gamma} = \sum_{\Lambda} t^{\La}_{\Om \Gamma} \, m_{\La}
\end{equation}
is such that $t^{\La}_{\Om \Gamma}$ is non-zero only if
$\La^* \leq \Om^* + \Gamma^*$ and  $\La^{\circledast}
\leq \Om^{\circledast} + \Gamma^{\circledast}$.  Furthermore, we prove
that if there exists a superpartition
$\Lambda$ such that
$\La^*= \Om^* + \Gamma^*$
and  $\La^{\circledast}=\Om^{\circledast} + \Gamma^{\circledast}$,
then
$t^{\La}_{\Om \Gamma}$ is non-zero.

Since $\olw{e_{\La'}} = \lim_{\alpha \to 0} \orw{P_{\La}}$
\cite[Eq. 6.22]{DLMadv}
we have from \eqref{Ptriangular} that
\begin{equation} \label{Eqeenm}
e_{\Lambda'} = \pm m_{\La} + \sum_{\Om < \La} w_{\Om \La} \, m_{\Lambda},
\end{equation}
which implies that
\begin{equation}
m_{\Lambda} = \pm e_{\La'} + \sum_{\Omega < \La} w_{\Om \La}' e_{\Om'}.
\end{equation}
This immediately gives that
\begin{equation} \label{eqapres}
m_{\Om} \, m_{\Gamma} =
\sum_{\Delta \leq \Om, \Upsilon \leq \Gamma } w_{\Om \Delta}' w_{\Gamma \Upsilon}' e_{\Delta'} e_{\Upsilon' }
\end{equation}
where the coefficient of $e_{\Om'} \, e_{\Gamma'}$ is equal to $\pm 1$.
Now, $e_{\Delta'} e_{\Upsilon' }=0$ if $\Delta'$ and $\Upsilon'$ have fermionic rows of the
same lengths. Otherwise $e_{\Delta'} e_{\Upsilon' }=\pm e_{\Theta'}$, where
$\Theta$ is the unique superpartition such that $\Theta^*=\Delta^*+ \Upsilon^*$
and $\Theta^{\circledast}=\Delta^{\circledast}+ \Upsilon^{\circledast}$.
This implies, from \eqref{Eqeenm}, that if $m_{\La}$ appears in
$e_{\Delta'} e_{\Upsilon' }$ then $\La \leq \Theta$.
Hence, if $m_{\La}$ appears in
$m_{\Om}\, m_{\Gamma}$ then
\begin{equation}
\La^* \leq \Theta^* \leq \Delta^*+ \Upsilon^*\leq \Om^*+\Gamma^* \qquad
{\rm and }\qquad
\La^{\circledast} \leq \Theta^{\circledast} \leq \Delta^{\circledast}+
\Upsilon^{\circledast}\leq \Om^{\circledast}+\Gamma^{\circledast},
\end{equation}
which proves \eqref{Eqmm}.
The fact that
if there exists a superpartition
$\Lambda$ such that
$\La^*= \Om^* + \Gamma^*$
and  $\La^{\circledast}=\Om^{\circledast} + \Gamma^{\circledast}$,
then
$t^{\La}_{\Om \Gamma}$ is non-zero is immediate since, as already observed,
the coefficient
of $e_{\Om'} \, e_{\Gamma'}$ in \eqref{eqapres}
is equal to $\pm 1$.

From the triangularity relation \eqref{Ptriangular},
the previous result implies
that $g^{\La}_{\Om \Gamma}=0$ unless  $\La^*
\leq \Om^* + \Gamma^*$ and
$\La^{\circledast} \leq \Om^{\circledast} + \Gamma^{\circledast}$.
 From Lemma~\ref{lemmaequivconj}, we have
\begin{equation} \label{lessi2}
g^{\La}_{\Om \Gamma}\neq 0\quad\text{ if and only if }
\quad g^{\La'}_{\Om' \Gamma'}\neq 0.
\end{equation}
But if
$g^{\La'}_{\Om' \Gamma'}\neq 0$ then
${\La^*}' \leq {\Om^*}' + {\Gamma^*}'$ and
${\La^{\circledast}}' \leq {\Om^{\circledast}}' + {\Gamma^{\circledast}}'$
which is equivalent to
$
\Om^* \cup \Gamma^* \leq \La^*$ and
$ \Om^{\circledast} \cup \Gamma^{\circledast} \leq \La^{\circledast}$.

As we have seen, if there exists a superpartition
$\Lambda$ such that
$\La^*= \Om^* + \Gamma^*$
and  $\La^{\circledast}=\Om^{\circledast} + \Gamma^{\circledast}$,
then
$t^{\La}_{\Om \Gamma}$ is non-zero.
By the triangularity \eqref{Ptriangular}, we have
$g^{\La}_{\Om \Gamma}\neq 0$ in those cases.  The final claim
follows from  (\ref{lessi2}).
\end{proof}

\subsection{Necessary conditions for the non-vanishing of coefficients
in the Pieri rule: horizontal and vertical strips}\label{sectstrips}
Let $n$ and $\tilde n$ refer respectively to the superpartitions
$(n)$ and $(n;)$, i.e.,   associated respectively to the following diagrams both containing $n$  squares:
\begin{equation}  n={\tableau[scY]{&&\bl\;\cdots&\bl&}}\quad\text{and}\quad
\tilde n={\tableau[scY]{&&\bl\;\cdots&\bl& &\bl\tcercle{}}}.
\end{equation}
We now obtain  necessary conditions for the
non-vanishing of the coefficients $g^\La_{\Omega,n}$ and
$g^\La_{\Omega,\tilde n}$.
These results specify -- without evaluating them explicitly -- the
coefficients that can appear in a Pieri-type rule for Jack polynomials
in superspace.

When no fermions are involved (in which case superpartitions
$\Lambda$ and $\Om$ are usual partitions $\la$ and $\mu$),
it is known that the coefficient $g^\la_{\mu,n}\neq 0$ if and only if
$\la/\mu$ is a horizontal $n$-strip.
The concept of horizontal or vertical strip can be easily generalized to superpartitions.

\begin{definition} \label{strips}
We say that $\La/\Om$
is a horizontal $n$-strip
if
$\La^*/\Om^*$  and
$\La^{\circledast}/\Om^{\circledast}$
are both horizontal $n$-strips.
Similarly, we say that $\La/\Om$
is a horizontal  $\tilde n$-strip
if
$\La^*/\Om^*$ is a horizontal $n$-strip and
$\La^{\circledast}/\Om^{\circledast}$
is a horizontal  $n+1$-strip. The definitions are similar for vertical strips.
\end{definition}
Consider for example, $\La=(4,1;2,1)$ and $\Om=(2,0;3,1)$.  Then, as
 illustrated in Figure \ref{hstrip}, $\La/\Om$ is a horizontal 3-strip, but it is not a vertical 3-strip.  Similarly, it is readily {seen}
from Figure \ref{vstrip} that $(3,0;2,1)/(2;2)$ is  a vertical $\tilde{2}$-strip.

\begin{figure}[h]\caption{Horizontal $n$-strip}\label{hstrip}
{\begin{equation*}
\La={\tableau[scY]{&&&&\bl\tcercle{}\\&\\&\bl\tcercle{}\\&\bl}}\quad \Om={\tableau[scY]{&&\\&\bl\tcercle{}\\&\bl\\\bl\tcercle{}}} \quad\Longrightarrow\quad
 \La^*/\Om^*={\tableau[scY]{\bl&\bl&\bl&\\  \bl& \\ \bl & \bl  \\ &\bl  }}
\qquad
\La^\circledast/\Om^\circledast={\tableau[scY]{  \bl&\bl&\bl&&\\  \bl& \bl\\  \bl & \\ \bl }}
\end{equation*}}
\end{figure}

\begin{figure}[h]
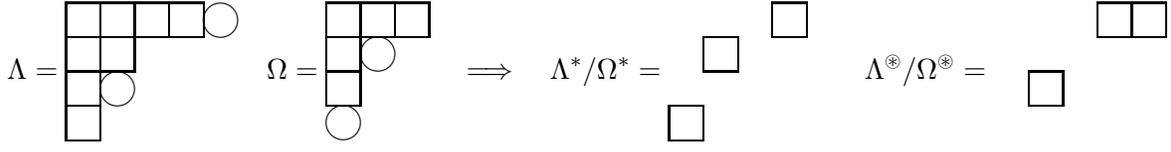
\caption{Vertical $\tilde n$-strip}\label{vstrip}
{\begin{equation*}
\La={\tableau[scY]{&&&\bl\tcercle{}\\&\\ \\  \bl\tcercle{} }}\quad \Om={\tableau[scY]{&&\bl\tcercle{}\\&\\  \bl \\\bl }} \quad\Longrightarrow\quad
\La^*/\Om^*={\tableau[scY]{\bl&\bl&\\  \bl& \bl \\ &\bl \\ \bl  }}
\qquad
\La^\circledast/\Om^\circledast={\tableau[scY]{\bl&\bl&\bl&\\  \bl& \bl\\   \\  \\ \bl}}
\end{equation*}}
\end{figure}

The proofs of the next two propositions rely on properties of non-symmetric
Jack polynomials.
As the latter are not used elsewhere and the demonstrations
are rather involved, they are
relegated to Appendix~\ref{appenB}. Note
that the equivalences in the statements follow from
Lemma~\ref{lemmaequivconj}.
\begin{proposition}\label{PropgI}
The coefficient $g^\La_{\Om,n}\neq0$
 only if $\La/\Om$ is a horizontal $n$-strip.  Equivalently,
the coefficient $g^\La_{\Om,1^n}\neq0$
 only if $\La/\Om$ is a vertical $n$-strip
\end{proposition}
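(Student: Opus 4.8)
The plan is to establish only the horizontal $n$-strip statement; the equivalent claim for $g^\La_{\Om,1^n}$ and vertical strips then follows from Lemma~\ref{lemmaequivconj}, since $(1^n)'=(n)$ and $\La/\Om$ is a vertical $n$-strip exactly when $\La'/\Om'$ is a horizontal $n$-strip (conjugation commutes with the operations $*$ and $\circledast$ and exchanges horizontal with vertical strips). So suppose $g^\La_{\Om,n}\neq 0$, where $n$ denotes the one-row bosonic superpartition, so that $n^*=n^\circledast=(n)$. Proposition~\ref{Propg} already gives $\Om^*\cup(n)\le\La^*\le\Om^*+(n)$ and $\Om^\circledast\cup(n)\le\La^\circledast\le\Om^\circledast+(n)$, hence $|\La^*|=|\Om^*|+n$, $|\La^\circledast|=|\Om^\circledast|+n$, and $\La$, $\Om$ have a common fermionic degree $m$. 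What remains is to strengthen these dominance bounds to the containments $\Om^*\subseteq\La^*$, $\Om^\circledast\subseteq\La^\circledast$ and the strip conditions: no column of $\La^*/\Om^*$ or of $\La^\circledast/\Om^\circledast$ holds two cells.

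First I would reduce the problem to a Pieri rule for Jack polynomials of mixed (prescribed) symmetry. Since $P_{(n)}=P_{(n)}(x)$ contains no anticommuting variables and is symmetric in the $x_i$, extracting the coefficient of $\theta_1\cdots\theta_m$ from both sides of \eqref{EqProdJ} yields
\begin{equation}
(P_\Om)_{1,\dots,m}(x)\,P_{(n)}(x)=\sum_{\La}\frac{g^\La_{\Om,n}}{\|P_\La\|^2}\,(P_\La)_{1,\dots,m}(x),
\end{equation}
the sum running over superpartitions $\La$ of fermionic degree $m$. The components $(P_\La)_{1,\dots,m}(x)$, antisymmetric in $x_1,\dots,x_m$ and symmetric in $x_{m+1},\dots,x_N$, are, up to normalization, the Jack polynomials with prescribed symmetry, and they are linearly independent; hence $g^\La_{\Om,n}\neq 0$ forces $(P_\La)_{1,\dots,m}$ to appear when $(P_\Om)_{1,\dots,m}(x)\,P_{(n)}(x)$ is expanded in that basis.

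I would then expand this product in non-symmetric Jack polynomials $E_\eta$, using the description of $P_\La$ (cf.\ \cite[Sect.~9]{DLMcmp2}) as the partial (anti)symmetrization — antisymmetrization over $x_1,\dots,x_m$, symmetrization over $x_{m+1},\dots,x_N$ — of a suitable $E_\eta$, and handling the bosonic factor $P_{(n)}(x)$ through the known multiplication rules for non-symmetric Jack polynomials. Each such multiplication step enlarges the underlying composition by a single box and never shortens a column, so after re-(anti)symmetrizing, the composition attached to any $\La$ with $g^\La_{\Om,n}\neq 0$ differs from that of $\Om$ by boxes no two of which lie in a common column; since this composition simultaneously records $\La^\circledast$ (through the fermionic parts increased by one, together with the bosonic parts) and $\La^*$ (through the fermionic parts left as they are), it follows that $\La^\circledast/\Om^\circledast$ and $\La^*/\Om^*$ are both horizontal $n$-strips. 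I expect the main obstacle to be precisely this bookkeeping, deferred to Appendix~\ref{appenB}: the non-symmetric Pieri coefficients are indexed by compositions, and one must control carefully how antisymmetrization over the fermionic block interacts with symmetrization over the bosonic block — in particular how boxes added to rows ending in a circle are tracked — in order to conclude that the resulting coefficient vanishes unless both strip conditions hold.
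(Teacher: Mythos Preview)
Your overall architecture---pass to the polynomials with prescribed symmetry, expand in non-symmetric Jack polynomials $E_\eta$, and read off the strip condition from a Pieri-type rule at the non-symmetric level---is exactly the route the paper takes in Appendix~\ref{appenB}. The one substantive difference is the direction: you propose to attack the horizontal statement for $P_{(n)}$ directly, whereas the paper proves the \emph{vertical} statement for $e_n=P_{(1^n)}$ and then obtains the horizontal one via Lemma~\ref{lemmaequivconj}. This is not merely cosmetic. The non-symmetric Pieri rule that is actually available and clean (from \cite{McAnally}, quoted in \eqref{pierinonsym}) describes the expansion of $x_{i_1}\cdots x_{i_p}\,E_\eta$; since $e_n=\sum_{i_1<\cdots<i_n}x_{i_1}\cdots x_{i_n}$, this immediately feeds into $e_n\,P_\La$ and yields the vertical $n$-strip condition. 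By contrast, there is no equally direct rule for $P_{(n)}(x)\cdot E_\eta$, and your sentence ``each such multiplication step enlarges the underlying composition by a single box and never shortens a column'' does not describe multiplication by $P_{(n)}$; it describes multiplication by a single $x_i$ or by a squarefree monomial. So the step you flag as ``bookkeeping'' is in fact the crux, and in your chosen direction it would require an additional ingredient you have not supplied.

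A second point worth flagging: the delicate part of the paper's argument is precisely the one you anticipate---controlling how the (anti)symmetrization over the fermionic and bosonic blocks interacts with the row rearrangements in $\mathbb J_{N,p}$. The paper handles this by tracking, for each composition $\eta$ appearing in $x_{i_1}\cdots x_{i_n}E_{\tilde\La}$, which rows may cross the fermionic/bosonic boundary under the rule after \eqref{pierinonsym}, and checking case by case that the induced $\Om_\eta^\circledast/\La^\circledast$ is still a vertical strip. Your sketch of how the composition ``simultaneously records $\La^\circledast$ and $\La^*$'' is on the right track but would need to be made equally precise. In short: same method, but switch to the $e_n$ side where the non-symmetric Pieri rule is available, and then invoke Lemma~\ref{lemmaequivconj} as you already planned---just in the opposite direction.
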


\begin{proposition} \label{PropgII}
The coefficient  $g^\La_{\Om,\tilde n}\neq0$
only if
$\La/\Om$ is a horizontal $\tilde n$-strip. Equivalently,
the coefficient  $g^\La_{\Om,(0;1^n)}\neq0$
only if
$\La/\Om$ is a vertical $\tilde n$-strip
\end{proposition}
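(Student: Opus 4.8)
The two assertions are equivalent: by Lemma~\ref{lemmaequivconj} and the fact that $(0;1^n)$ is the conjugate of $\tilde n$, one has $g^\La_{\Om,(0;1^n)}\neq0$ if and only if $g^{\La'}_{\Om',\tilde n}\neq0$, while by Definition~\ref{strips} a vertical $\tilde n$-strip is exactly the conjugate of a horizontal $\tilde n$-strip. So the plan is to prove only that $g^\La_{\Om,\tilde n}\neq0$ forces $\La^*/\Om^*$ to be a horizontal $n$-strip and $\La^\circledast/\Om^\circledast$ a horizontal $(n+1)$-strip. Proposition~\ref{Propg} already supplies the weak bounds $\Om^*\cup(n)\leq\La^*\leq\Om^*+(n)$ and $\Om^\circledast\cup(n+1)\leq\La^\circledast\leq\Om^\circledast+(n+1)$; these are implied by, but strictly weaker than, the strip conditions (e.g.\ $(3,3)$ satisfies the analogous inequalities relative to $(2,2)$ without $(3,3)/(2,2)$ being a horizontal strip), so the real task is to upgrade them to the interlacing inequalities $\La^*_1\geq\Om^*_1\geq\La^*_2\geq\cdots$ and likewise for $\La^\circledast$.

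For this I would work with the non-symmetric Jack polynomials $E_\eta(x;\alpha)$, $\eta\in\mathbb N^N$, and with the description of the Jack superpolynomials as (anti)symmetrizations of them from \cite[Sect.~9]{DLMcmp2}. The first step is to pin down the dictionary: the polynomial $(P_\La)_{1,\ldots,m}(x)$ of \eqref{Eqformsuperpoly}, antisymmetric in $x_1,\ldots,x_m$ and symmetric in $x_{m+1},\ldots,x_N$, equals up to a nonzero scalar $\mathcal A_{\{1,\ldots,m\}}\mathcal S_{\{m+1,\ldots,N\}}E_{\eta_\La}$ for a composition $\eta_\La$ whose sorted rearrangement encodes $\La$; alongside this I would record the triangularity $E_\eta=x^\eta+\sum_{\nu\prec\eta}(\cdots)\,x^\nu$ in the natural order $\prec$ on compositions and the way $\prec$ descends to the dominance order on superpartitions once the antisymmetrizer $\mathcal A$ and the symmetrizer $\mathcal S$ (acting on disjoint blocks of variables, hence commuting) are applied.

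Next I would extract the coefficient of $\theta_1\cdots\theta_{m+1}$ from the product rule \eqref{EqProdJ} with $\Gamma=\tilde n$. Since $m_{\tilde n}=\tilde p_n=\sum_i\theta_ix_i^n$, the component $(P_{\tilde n})_{\{j\}}(x)$ has the form $x_j^n$ plus terms associated with smaller superpartitions, and the extraction turns the statement into the assertion that in the expansion of $\sum_{j=1}^{m+1}(-1)^{j-1}(P_{\tilde n})_{\{j\}}(x)\,(P_\Om)_{\{1,\ldots,m+1\}\setminus\{j\}}(x)$ over the mixed-symmetry polynomials $(P_\La)_{1,\ldots,m+1}(x)$ of fermionic degree $m+1$ and length $\leq N$, only the $\La$ satisfying the two horizontal-strip conditions occur. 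Through the dictionary this becomes a question about multiplying $E_{\eta_\Om}$ by a single monomial $x_j^n$ (modulo lower corrections treated by the same analysis) and then applying $\mathcal A_{\{1,\ldots,m+1\}}\mathcal S_{\{m+2,\ldots,N\}}$; the key input is the Pieri-type multiplication rule for non-symmetric Jack polynomials, which expresses $x_j^n E_\eta$ as a combination of $E_\mu$ with $\mu$ ranging over a controlled set of raisings of $\eta$, together with the clean identities $x_1\cdots x_N\,E_\eta=E_{\eta+(1^N)}$ and the action of the cyclic raising operator.

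The main obstacle, and the reason these propositions are relegated to an appendix, is this last translation step: the composition order $\prec$ is finer and far less symmetric than the order on superpartitions, so one must check that after antisymmetrizing over the first $m+1$ variables and symmetrizing over the remaining $N-m-1$ the support bound on the $E_\mu$ collapses \emph{exactly} to the simultaneous horizontal-strip conditions on $\La^*$ and on $\La^\circledast$, with no extraneous $\La$ surviving and with the interlacing---not merely the dominance---inequalities holding. I would handle this by first treating $n=1$, where multiplication by a single variable makes the non-symmetric Pieri rule and its effect on the sorted shape completely explicit, and then composing one-box moves while tracking how each newly added box is forced away from the columns already occupied by the fermionic rows of $\Om$. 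The companion Proposition~\ref{PropgI} is proved along the same lines, with $\Gamma=n$ in place of $\tilde n$ and a symmetrization replacing one of the antisymmetrizations; it is somewhat easier, since no new fermionic row is created.
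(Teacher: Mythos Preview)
Your general framework---reducing to the non-symmetric Jack polynomials via the relation \eqref{jackinnonsym} from \cite{DLMcmp2}---is exactly the one the paper uses, and your opening reduction of the two assertions to one another via Lemma~\ref{lemmaequivconj} is fine.  But two concrete choices in your plan diverge from the paper's argument in ways that create real gaps.

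First, you choose to prove the \emph{horizontal} version, multiplying $P_\Om$ by $P_{\tilde n}$.  The paper instead proves the \emph{vertical} version, multiplying $P_\La$ by $\tilde e_n$.  This matters: the superpartition $(0;1^n)$ is the minimum of its bidegree, so $P_{(0;1^n)}=m_{(0;1^n)}=\tilde e_n$ exactly, with no lower-order corrections.  By contrast $\tilde n=(n;)$ is the maximum, so $P_{\tilde n}$ is a genuinely complicated linear combination of monomials $m_\Gamma$ with $\Gamma\leq\tilde n$; your parenthetical ``modulo lower corrections treated by the same analysis'' hides a substantial difficulty, since those corrections involve $m_\Gamma$'s with several parts and the coefficient-of-$\theta_j$ extraction no longer yields a single power $x_j^n$.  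Moreover the non-symmetric Pieri rule the paper invokes (from \cite{McAnally}) is the one for products $x_{i_1}\cdots x_{i_p}$ of \emph{distinct} variables---tailor-made for $e_n$ and $\tilde e_n$---not for a power $x_j^n$.

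Second, and more seriously, even after passing to the vertical side and using that Pieri rule, the non-symmetric argument only delivers the $*$-condition (that $\Om^*/\La^*$ is a vertical $n$-strip).  The paper explicitly notes that the $\circledast$-condition ``is somewhat more subtle'' and proves it by an entirely different mechanism that your proposal does not contain: one introduces the odd derivation $d=\sum_i\theta_i\partial_{x_i}$ and its adjoint $d^\perp=\sum_i x_i\partial_{\theta_i}$, shows via adjointness and triangularity that $d\,P_\La=\sum_{\Om^\circledast=\La^\circledast}b_{\La\Om}P_\Om$, and then uses the Leibniz identity
\[
\tilde e_n\,P_\La \;=\; d\!\left(e_{n+1}P_\La\right)\;-\;e_{n+1}\,d(P_\La)
\]
(coming from $d\,e_{n+1}=\tilde e_n$) to reduce the $\circledast$-strip condition for $\tilde e_n$ to the already-established Proposition~\ref{PropgI} for $e_{n+1}$.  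Your suggested replacement---``first treat $n=1$ and then compose one-box moves''---does not correspond to any algebraic identity relating $P_{\tilde n}$ to iterates of $P_{\tilde 1}$ (indeed $\tilde p_1^2=0$), and at the non-symmetric level iterating the single-variable rule $n$ times does not obviously collapse after antisymmetrization to the interlacing inequalities on $\La^\circledast$.  That collapse is precisely what the $d$-trick accomplishes, and it is the missing idea in your sketch.
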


\begin{remark} Contrary to what occurs in the Pieri rule of Jack polynomials
\cite[Prop 5.3]{Stan}, the
{\it only if} in Propositions \ref{PropgI} and \ref{PropgII}
cannot be replaced by a
{\it if and only if}.  For example, if $\Om=(2;1)$, $\Lambda=(3;1)$ and $\tilde n= (1;)$ then it can be checked that $g^\La_{\Om,\tilde n}=0$ even though $\La/\Om$ is a horizontal
$\tilde 1$-strip.
\end{remark}
Recall that the diagram $\mu$ is contained in $\la$, denoted
$\mu\subseteq \la$, if $\mu_i\leq \la_i$ for all $i$.
For superpartitions
we define $\Om \subseteq \La$ as follows:
\begin{equation}\Om \subseteq\La\quad \text{if and only if} \quad\Om^* \subseteq \La^*\quad\text{and}\quad\Om^{\circledast} \subseteq \La^{\circledast} \, .
\end{equation}
{For instance, $(0;3,2)\subset(3,0;3,1)$ but $(2,1;3)\not\subseteq(3,0;3,1)$}.
Since the $e_{\Lambda}$'s form a multiplicative basis of
$\mathscr{R}$, the previous propositions have the following corollary.
\begin{corollary} \label{coro11}
We have that $g_{\Om \Gamma}^{\La}$ is zero unless $\Om \subseteq \La$ and $\Gamma \subseteq \La$.
\end{corollary}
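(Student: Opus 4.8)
The plan is to reduce the claim to the vertical‑strip Pieri estimates of Propositions~\ref{PropgI} and~\ref{PropgII} by decomposing one of the two factors into elementary superpolynomials. The key preliminary observation is that the elementary superpolynomials are themselves Jack superpolynomials sitting at the very bottom of the dominance order: $(1^k)$ is the minimum among all superpartitions of degree $(k|0)$ and $(0;1^k)$ is the minimum among all superpartitions of degree $(k|1)$ --- there is nothing strictly below them in the order \eqref{eqdeforder1} --- so the triangular characterization \eqref{Ptriangular} forces $e_k=m_{(1^k)}=P_{(1^k)}$ and $\tilde e_k=m_{(0;1^k)}=P_{(0;1^k)}$.

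Since the $e_\Lambda$'s form a multiplicative basis of $\mathscr{R}$, I would then write $P_\Gamma=\sum_\nu a_\nu\, e_\nu$ with $a_\nu\in\mathbb{Q}(\alpha)$, the sum running over superpartitions $\nu$, each $e_\nu$ being (by its definition) a product of the generators $e_k$ and $\tilde e_k$ just identified. Now I would expand $P_\Om\cdot e_\nu$ by multiplying these factors in one at a time, re‑expanding in the $P$‑basis after each step. At every stage one confronts an expression of the form $P_\Theta\cdot P_{(1^k)}$ or $P_\Theta\cdot P_{(0;1^k)}$ for a superpartition $\Theta$ produced so far, and by the vertical‑strip halves of Propositions~\ref{PropgI} and~\ref{PropgII} the only $P_\Psi$ that can occur are those for which $\Psi/\Theta$ is a vertical $n$‑ or $\tilde n$‑strip; in particular $\Theta^*\subseteq\Psi^*$ and $\Theta^\circledast\subseteq\Psi^\circledast$, i.e. $\Theta\subseteq\Psi$. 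Since $\subseteq$ on superpartitions is transitive, iterating over all factors of $e_\nu$ shows that $P_\La$ can appear in $P_\Om\cdot e_\nu$ only when $\Om\subseteq\La$, and summing over $\nu$ gives the same conclusion for $P_\Om\cdot P_\Gamma$; thus $g_{\Om\Gamma}^\La=0$ unless $\Om\subseteq\La$. Running the argument again with the roles of $\Om$ and $\Gamma$ exchanged --- which changes $P_\Om P_\Gamma$ only by an overall sign, hence does not affect which coefficients vanish --- gives $g_{\Om\Gamma}^\La=0$ unless also $\Gamma\subseteq\La$.

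Once Propositions~\ref{PropgI} and~\ref{PropgII} are in hand the argument is largely bookkeeping, so the only points needing care are to make the inductive multiplication precise --- ensuring that at each stage one genuinely multiplies a single Jack superpolynomial $P_\Theta$ by one of the elementary Jack superpolynomials $P_{(1^k)}$ or $P_{(0;1^k)}$, so that those propositions apply verbatim --- and to observe that the anticommuting signs generated when the fermionic factors $\tilde e_k$ are reordered are irrelevant to the vanishing of coefficients. The real content, the non‑vanishing conditions for the Pieri‑type coefficients, lies entirely in Propositions~\ref{PropgI} and~\ref{PropgII}, whose proofs are postponed to Appendix~\ref{appenB}.
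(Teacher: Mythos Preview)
Your proof is correct and is precisely the argument the paper has in mind: the paper's entire proof is the single sentence ``Since the $e_{\Lambda}$'s form a multiplicative basis of $\mathscr{R}$, the previous propositions have the following corollary,'' and you have faithfully unpacked that hint, including the identifications $e_k=P_{(1^k)}$, $\tilde e_k=P_{(0;1^k)}$ and the iterated application of the vertical-strip halves of Propositions~\ref{PropgI} and~\ref{PropgII}.
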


\subsection{{Skew Jack polynomials in superspace}}\label{Sskewjack}
The skew Jack polynomial $P_{\La/\Om}$ is defined as the unique symmetric superfunction in $x$ and $\theta$  such that
\begin{equation}\label{defskew}
g^{\La}_{\Om \Gamma}=\LL \olw{P}_{\La/\Om}\,  | \,\orw{ P}_{\Gamma}\RR=\LL \olw{P}_\La   |  \orw{P}_\Om\orw{P}_\Gamma \RR  \,.
\end{equation}
Observe that this definition is equivalent to
\begin{equation}
P_{\Lambda/\Omega} = \sum_{\Gamma} \frac{g^{\Lambda}_{\Omega \Gamma}}{\|P_\Gamma\|^2}
P_{\Gamma} \, .
\end{equation}

\begin{lemma}\label{LemmaSkew}
\begin{equation}\sum_{\Om}\frac{1}{\|P_\Om\|^2}\orw{P}_\Om(x;\theta)\olw{P}_{\Om/\La}(y;\phi)=\sum_{\Om}{\frac{1}{\|P_\Om\|^2} }\orw{P}_\La(x;\theta)\orw{P}_\Om(x;\theta)\olw{P}_\Om(y;\phi).\end{equation}
\end{lemma}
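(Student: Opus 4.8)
The plan is to obtain the identity as a purely formal consequence of the definition \eqref{defskew} of the skew Jack superfunctions and of the product expansion \eqref{EqProdJ}; no combinatorics of superpartitions is needed, and the whole argument is the superspace transcription of the classical manipulation for skew symmetric functions. The only things to keep an eye on are the ordering of the Grassmann‑odd factors and the fact that the order‑reversing involution $\olw{\phantom{F}}$, although not an algebra map, is $\mathbb{Q}(\alpha)$‑linear.

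First I would expand, using the second (equivalent) form of \eqref{defskew}, $P_{\Om/\La}=\sum_{\Gamma}\|P_\Gamma\|^{-2}\,g^\Om_{\La\Gamma}\,P_\Gamma$. Every $P_\Gamma$ occurring here has the same fermionic degree — the one forced by $g^\Om_{\La\Gamma}\neq 0$ — so applying $\olw{\phantom{F}}$ term by term yields $\olw{P}_{\Om/\La}(y;\phi)=\sum_{\Gamma}\|P_\Gamma\|^{-2}\,g^\Om_{\La\Gamma}\,\olw{P}_\Gamma(y;\phi)$. Substituting this into the left-hand side of the lemma and regrouping the resulting double sum — legitimate because at each fixed bidegree in $(x,\theta)$ and in $(y,\phi)$ only finitely many $\Om$ and $\Gamma$ contribute, each $\mathscr{R}^{n,m}$ being finite dimensional — one gets
\[
\sum_{\Gamma}\frac{1}{\|P_\Gamma\|^{2}}\left(\sum_{\Om}\frac{g^\Om_{\La\Gamma}}{\|P_\Om\|^{2}}\,\orw{P}_\Om(x;\theta)\right)\olw{P}_\Gamma(y;\phi),
\]
where I have been careful to keep the $(x,\theta)$‑factor to the left of the $(y,\phi)$‑factor, exactly as in the original summand, the scalar coefficients in $\mathbb{Q}(\alpha)$ sliding past everything freely.

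The key step is then to read \eqref{EqProdJ} with its first two lower indices specialized to $(\La,\Gamma)$: it says precisely $\sum_{\Om}\|P_\Om\|^{-2}\,g^{\Om}_{\La\Gamma}\,P_\Om=P_\La\,P_\Gamma$, the product being taken in the order $P_\La$ then $P_\Gamma$. Since $\orw{\phantom{F}}$ is the identity, the parenthesized inner sum above equals $\orw{P}_\La(x;\theta)\,\orw{P}_\Gamma(x;\theta)$, and after relabelling $\Gamma\to\Om$ the whole expression becomes $\sum_{\Om}\|P_\Om\|^{-2}\,\orw{P}_\La(x;\theta)\,\orw{P}_\Om(x;\theta)\,\olw{P}_\Om(y;\phi)$, which is the right-hand side of the lemma; the three factors fall into the correct order automatically, $\orw{P}_\La$ preceding $\orw{P}_\Om$ because that is the order dictated by \eqref{EqProdJ}, and both preceding $\olw{P}_\Om(y;\phi)$ because that placement was maintained throughout. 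There is no serious obstacle here: the only thing that could go wrong is a stray sign, from mishandling $\olw{\phantom{F}}$ or from transposing two Grassmann‑odd factors, and the bookkeeping above is arranged exactly so that this does not occur. As a cross-check, one may instead pair both sides in the $(y,\phi)$ slot against $\orw{P}_\Sigma(y;\phi)$ for an arbitrary superpartition $\Sigma$: by \eqref{defskew} the left side collapses to $\sum_{\Om}\|P_\Om\|^{-2}\,g^\Om_{\La\Sigma}\,\orw{P}_\Om(x;\theta)=P_\La(x;\theta)P_\Sigma(x;\theta)$, while by \eqref{ortho} the right side collapses to $\orw{P}_\La(x;\theta)\orw{P}_\Sigma(x;\theta)$; equality for all $\Sigma$, together with $\|P_\Sigma\|^2\neq 0$, then forces the two sides to coincide.
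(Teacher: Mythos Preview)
Your proof is correct. It is, however, a genuinely different route from the paper's. The paper proves the identity by pairing both sides of the equation against $\olw{P}_\Gamma(x;\theta)\,\orw{P}_\Delta(y;\phi)$ using two independent scalar products (one in $(x,\theta)$, one in $(y,\phi)$), and then carefully tracks the signs $(-1)^{\underline{\overline{\Gamma}}\cdot(\underline{\overline{\Omega}}-\underline{\overline{\Lambda}})}$ that arise from commuting Grassmann-odd factors past each other. Your argument is more direct: you substitute the defining expansion of $P_{\Omega/\Lambda}$ into the left-hand side, interchange the two summations, and recognize the inner sum as the product expansion \eqref{EqProdJ}. Because you never have to move a $(y,\phi)$-factor past an $(x,\theta)$-factor, no sign bookkeeping is required at all; this is the payoff of your approach. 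The paper's method, by contrast, is closer in spirit to ``extract coefficients and compare,'' and while it involves more sign tracking, it has the advantage of making explicit why the arrows $\olw{\phantom{F}}$ and $\orw{\phantom{F}}$ are placed as they are. Your cross-check at the end (pairing only in the $(y,\phi)$ slot) is essentially a one-variable version of the paper's argument.
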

\begin{proof}
 We denote the left-hand side and right-hand side of the equation by $F_\La(x;\theta|y;\phi)$ and $G_\La(x;\theta|y,\phi)$, respectively.
 By the linear independence and the orthogonality of the Jack polynomials, it is sufficient to prove that the following equation holds true for all superpartitions $\Gamma$ and $\Delta$:
 \begin{equation}
 \LL F_\La(x;\theta|y;\phi) |\olw{P}_\Ga(x;\theta)\, \orw{P}_\Delta(y;\phi)\RR=\LL  G_\La(x;\theta|y;\phi)
|\olw{P}_\Ga(x;\theta)\, \orw{P}_\Delta(y;\phi)\RR,\end{equation}
 where it is understood that two independent scalar products are taken, first with respect to the indeterminates $x$ and $\theta$, and then with respect to  $y$ and $\theta$.
 Thus, the left-hand side (LHS) is
 \begin{align}
 \text{LHS}
&= \sum_\Om\frac1{\|P_\Om\|^2}\LL\orw{P}_\Om(x;\ta)|\olw{P}_\Gamma(x;\ta)\RR (-1)^{\underline{\overline{\Ga}}(\underline{\overline{\Om}}-\underline{\overline{\La}})}\LL\olw{P}_{\Om/\La}(y;\phi)|\orw{P}_\Delta(y;\phi)\RR\nonumber\\
 &=(-1)^{\underline{\overline{\Ga}}(\underline{\overline{\Ga}}-\underline{\overline{\La}})}\LL\olw{P}_{\Ga/\La}(y;\phi)|\orw{P}_\Delta(y;\phi)\RR\nonumber\\
& =(-1)^{\underline{\overline{\Ga}}\cdot\underline{\overline{\Delta}}}\,\LL\olw{P}_{\Ga}(y;\phi)|\orw{P}_\La(y;\phi)\orw{P}_\Delta(y;\phi)\RR.
 \end{align}
The sign in the first equality comes from the reordering of the two terms in the product $\olw{P}_{\Om/\La}\olw{P}_\Gamma$. To get the second line, we used the obvious equality $\LL\orw{P}_\Om|\olw{P}_\Gamma\RR=\LL\olw{P}_\Om|\orw{P}_\Gamma\RR$. Finally, since the scalar product in the third line is non-zero only if $\underline{\overline{\Ga}}=\underline{\overline{\La}}+\underline{\overline{\Delta}}$, this is used to simplify the phase factor.
Similarly for the right-hand side, we have
\begin{align}
 \text{RHS}
&= \sum_\Om\frac1{\|P_\Om\|^2}\LL\orw{P}_\La(x;\ta)\orw{P}_\Om(x;\ta)|\olw{P}_\Gamma(x;\ta)\RR
(-1)^{\underline{\overline{\Ga}}\cdot\underline{\overline{\Om}}}
\LL\olw{P}_{\Om}(y;\phi)|\orw{P}_\Delta(y;\phi)\RR\nonumber\\
 &=(-1)^{\underline{\overline{\Ga}}\cdot\underline{\overline{\Delta}}}\, \LL\orw{P}_{\La}(x;\ta)\orw{P}_\Delta(x;\ta)|\olw{P}_\Ga(x;\ta)\RR,
 \end{align}
 which proves the lemma.
\end{proof}

\begin{proposition}\label{PropSkew}
 Let $(x,y;\theta,\phi)$ denote the ordered set
$(x_1,x_2,\ldots,y_1,y_2,\ldots;\theta_1,\theta_2,\ldots,\phi_1,\phi_2,\ldots).$
Then,
we have
\begin{equation}\label{EqSkew}\begin{split}
{P}_\Gamma(x,y;\theta,\phi)&=\sum_\La\frac{1}{\|P_\La\|^2}\,{P}_\La(x;\theta)
{P}_{\Gamma/\La}(y;\phi).\end{split}\end{equation}
Moreover, the following generalization holds
\begin{equation}\label{EqSkew2}
{P}_{\Gamma/\Om}(x,y;\theta,\phi)=\sum_\La\frac{1}{\|P_\La\|^2}\, {P}_{\La/\Om}(x;\theta) {P}_{\Ga/\La}(y;\phi),
\end{equation}
reducing to the previous identity when $\Om$ is the empty superpartition.
\end{proposition}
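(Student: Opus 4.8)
The plan is to derive \eqref{EqSkew} from the Cauchy formula exactly as in the classical case, and then to bootstrap \eqref{EqSkew2} from \eqref{EqSkew} by iterating it on three alphabets. Concretely, for \eqref{EqSkew} I would introduce a fresh set of indeterminates $(z;\psi)$, disjoint from $(x;\theta)$ and $(y;\phi)$, and consider the Cauchy kernel coupling the combined alphabet $(x,y;\theta,\phi)$ with $(z;\psi)$. Since this kernel is a product over pairs of variables, it factorizes as the product of the $(x;\theta)$--$(z;\psi)$ kernel and the $(y;\phi)$--$(z;\psi)$ kernel. I would then expand this single quantity in two ways: on one side, the Cauchy formula in the combined alphabet directly gives $\sum_\Gamma\|P_\Gamma\|^{-2}\,\olw{P}_\Gamma(x,y;\theta,\phi)\,\orw{P}_\Gamma(z;\psi)$; on the other, applying the Cauchy formula to each factor separately gives $\sum_{\La,\Om}\|P_\La\|^{-2}\|P_\Om\|^{-2}\,\olw{P}_\La(x;\theta)\,\orw{P}_\La(z;\psi)\,\olw{P}_\Om(y;\phi)\,\orw{P}_\Om(z;\psi)$. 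In the second expansion I would move $\orw{P}_\La(z;\psi)$ past $\olw{P}_\Om(y;\phi)$ to collect the $z$-dependence, picking up the sign $(-1)^{\underline{\overline{\La}}\,\underline{\overline{\Om}}}$, and then expand $\orw{P}_\La(z;\psi)\,\orw{P}_\Om(z;\psi)$ using the definition \eqref{EqProdJ} of the coefficients $g^\Gamma_{\La\Om}$. Comparing the (linearly independent) coefficients of $\orw{P}_\Gamma(z;\psi)$ and replacing every arrow $\olw{\,\cdot\,}$ by its sign $(-1)^{\binom{\,\cdot\,}{2}}$, the reordering sign cancels precisely: $g^\Gamma_{\La\Om}\neq 0$ forces $\underline{\overline{\Gamma}}=\underline{\overline{\La}}+\underline{\overline{\Om}}$, whence $\binom{\underline{\overline{\Gamma}}}{2}=\binom{\underline{\overline{\La}}}{2}+\binom{\underline{\overline{\Om}}}{2}+\underline{\overline{\La}}\,\underline{\overline{\Om}}$. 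What is left is $P_\Gamma(x,y;\theta,\phi)=\sum_{\La,\Om}\|P_\La\|^{-2}\|P_\Om\|^{-2}\,g^\Gamma_{\La\Om}\,P_\La(x;\theta)\,P_\Om(y;\phi)$, and the inner sum over $\Om$ is, by the definition of the skew Jack polynomial \eqref{defskew}, exactly $P_{\Gamma/\La}(y;\phi)$. This is \eqref{EqSkew}. (This is the same content as Lemma~\ref{LemmaSkew}, which is its generating-function incarnation; alternatively one could feed Lemma~\ref{LemmaSkew} directly into the Cauchy kernel in the $(y;\phi)$ variables and compare coefficients.)

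For the generalization \eqref{EqSkew2}, I would apply \eqref{EqSkew} on three disjoint alphabets $(x;\theta)$, $(y;\phi)$, $(z;\psi)$. On one hand, grouping $(x,y)$ against $z$ and then splitting $x$ from $y$ yields $P_\Gamma(x,y,z;\theta,\phi,\psi)=\sum_{\La,\Om}\|P_\La\|^{-2}\|P_\Om\|^{-2}\,P_\Om(x;\theta)\,P_{\La/\Om}(y;\phi)\,P_{\Gamma/\La}(z;\psi)$; on the other hand, grouping $x$ against $(y,z)$ yields $P_\Gamma(x,y,z;\theta,\phi,\psi)=\sum_\Om\|P_\Om\|^{-2}\,P_\Om(x;\theta)\,P_{\Gamma/\Om}(y,z;\phi,\psi)$. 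Since the $P_\Om(x;\theta)$ form a basis of the ring of superfunctions symmetric in $(x;\theta)$ and the natural inclusion $\mathscr{R}(x;\theta)\otimes\mathscr{R}(y,z;\phi,\psi)\hookrightarrow\mathscr{R}(x,y,z;\theta,\phi,\psi)$ is injective, I may equate the coefficient of $P_\Om(x;\theta)$ in the two expressions; cancelling $\|P_\Om\|^{-2}$ gives $P_{\Gamma/\Om}(y,z;\phi,\psi)=\sum_\La\|P_\La\|^{-2}\,P_{\La/\Om}(y;\phi)\,P_{\Gamma/\La}(z;\psi)$, which is \eqref{EqSkew2} after renaming $(y,z;\phi,\psi)\to(x,y;\theta,\phi)$. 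Setting $\Om=\emptyset$ recovers \eqref{EqSkew}.

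The only genuinely delicate part is the Grassmann-sign bookkeeping: tracking the reordering sign $(-1)^{\underline{\overline{\La}}\,\underline{\overline{\Om}}}$ against the $(-1)^{\binom{\,\cdot\,}{2}}$ factors built into the $\olw{\,\cdot\,}$ convention, and making sure that "comparing coefficients" is legitimate in a ring of superfunctions whose coefficients may themselves be odd. This is exactly what Lemma~\ref{LemmaSkew} and the scalar-product manipulations in its proof are designed to control, so most of that work is already available; what remains is to thread it through the Cauchy formula with a spectator alphabet, as above.
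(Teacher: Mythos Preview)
Your proposal is correct and follows essentially the same route as the paper: for \eqref{EqSkew} you use the Cauchy kernel with a spectator alphabet and compare coefficients (the paper routes this through Lemma~\ref{LemmaSkew}, but as you note your direct expansion via the structure constants $g^{\Gamma}_{\La\Om}$ is the same computation), and for \eqref{EqSkew2} you use the same three-alphabet bootstrap, only comparing coefficients of $P_\Om(x;\theta)$ where the paper compares coefficients in the last alphabet. Your sign bookkeeping via $\binom{a+b}{2}=\binom{a}{2}+\binom{b}{2}+ab$ is exactly what is needed.
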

\begin{proof}
Let us first point out that throughout the proof, we use of the obvious identity $(-1)^{\underline{\overline{\Ga}}\cdot \underline{\overline{\Ga}}}=(-1)^{\underline{\overline{\Ga}}}$, which is true for any superpartition
since  $n^2=n \mod 2$ for any integer $n$.

Now let $(z;\tau)=(z_1,z_2,\ldots;\tau_1,\tau_2,\ldots)$.  The use of the Cauchy formula allows us to write
\begin{equation}
\begin{split}\sum_{\Ga}\frac{1}{\|P_\Ga\|^2}\olw{P_\Ga}(x,y;\theta,\phi)\orw{P_\Ga}(z;\tau)&=
\prod_{i}\prod_{j}\prod_{k}(1-x_jz_k-\theta_j\tau_k)^{-1/\alpha}(1-y_iz_k-\phi_i\tau_k)^{-1/\alpha}\\
&=\sum_{\La,\Gamma}\frac{1}{\|P_\La\|^2 \|P_\Ga\|^2}\olw{P_\La}(x;\theta)\orw{P_\La}(z;\tau)\olw{P_\Gamma}(y;\phi)\orw{P_\Gamma}(z;\tau)\\
&=\sum_{\La}\frac{1}{\|P_\La\|^2}\olw{P_\La}(x;\theta) \sum_\Ga\frac1{ \|P_\Ga\|^2}\orw{P_\La}(z;\tau)
\orw{P_\Gamma}(z;\tau)\olw{P_\Gamma}(y;\phi)\,(-1)^{\underline{\overline{\Ga}}\cdot \underline{\overline{\Ga}}},
\end{split}\end{equation}
where in the last step, the last two terms have been interchanged.
 We then use the identity
 \begin{equation}(-1)^{\underline{\overline{\Ga}}\cdot \underline{\overline{\Ga}}}\,\olw{P_\Gamma}(y;\phi)=\olw{P_\Gamma}(y;-\phi)\end{equation} and  Lemma \ref{LemmaSkew} to get
\begin{equation}
\begin{split}
 \sum_{\Ga}\frac{1}{\|P_\Ga\|^2}\olw{P_\Ga}(x,y;\theta,\phi)\orw{P_\Ga}(z;\tau)
 &=\sum_{\La}\frac{1}{\|P_\La\|^2}\olw{P}_\La(x;\theta) \sum_\Ga\frac1{ \|P_\Ga\|^2}\orw{P_\Ga}(z;\tau)
\olw{P}_{\Gamma/\La}(y;-\phi)
\\&=\sum_{\La}\frac{1}{\|P_\La\|^2}\olw{P}_\La(x;\theta) \sum_\Ga\frac1{ \|P_\Ga\|^2}
\olw{P}_{\Gamma/\La}(y;\phi)\orw{P_\Ga}(z;\tau) \, (-1)^{\underline{\overline{\La}}\cdot( \underline{\overline{\Ga}}- \underline{\overline{\La}})},
\end{split}
\end{equation}
where the last line has been simplified thanks to $(-1)^{\underline{\overline{\Ga}}-\underline{\overline{\La}}
+\underline{\overline{\Ga}}\cdot( \underline{\overline{\Ga}}- \underline{\overline{\La}})}=(-1)^{\underline{\overline{\La}}\cdot( \underline{\overline{\Ga}}- \underline{\overline{\La}})}$.
Again, the linear independence
of the Jack superpolynomials allow us to equate the coefficients of $\orw{P_\Ga}(z;\tau) $ on both sides.
We {finally} permute $\olw{P}_\La(x;\theta)$ with $\olw{P}_{\Gamma/\La}(y;\phi)$ in order to cancel out the factor  $(-1)^{\underline{\overline{\La}}\cdot( \underline{\overline{\Ga}}- \underline{\overline{\La}})}$ and get
\begin{equation}\label{EqSkew1}\begin{split}
\olw{P_\Ga}(x,y;\theta,\phi)&=\sum_\La\frac{1}{\|P_\La\|^2}\,\olw{P}_{\Gamma/\La}(y;\phi)\olw{P_\La}(x;\theta).\end{split}\end{equation}
which is equivalent to \eqref{EqSkew}.

To prove the second part,  we first observe that  the symmetry property of $\olw{P}_\La$ allows us to interchange the variables $(x;\theta)$ and $(y;\phi)$ in   \eqref{EqSkew1}, so that
\begin{equation}\label{EqSkewa}\begin{split}
\olw{P}_{{\Gamma}}(x,y;\theta,\phi)&=\sum_\La\frac{1}{\|P_\La\|^2}\,\olw{P}_{\Gamma/\La}(x;\theta) \olw{P_\La}(y;\phi).\end{split}\end{equation}
 Writing  \eqref{EqSkewa} in terms of three sets of variables yields
\begin{equation}
\begin{split}
\olw{P_\Gamma}(x,y,z;\theta,\phi,\tau)&=\sum_\La\frac{1}{\|P_\La\|^2}\olw{P}_{\Gamma/\La}(x;\theta)\olw{P}_\La(y,z;\phi,\tau).
\end{split}
\end{equation}
Using \eqref{EqSkewa} to expand $\olw{P}_\La(y,z;\phi,\tau)$,
we obtain
\begin{equation}
\olw{P_\Gamma}(x,y,z;\theta,\phi,\tau)=\sum_{\La,\Om}\frac{1}{\|P_\La\|^2\|P_\Om\|^2}\olw{P}_{\Gamma/\La}(x;\theta)\olw{P}_{\La/\Om}(y;\phi)\olw{P_\Om}(z;\tau).\end{equation}
However, there is another way to write   \eqref{EqSkewa} in terms of three sets of variables:
\begin{equation}
\olw{P_\Gamma}(x,y,z;\theta,\phi,\tau)=\sum_\Om\frac{1}{\|P_\Om\|^2}\olw{P}_{\Gamma/\Om}(x,y;\theta,\phi)\olw{P}_\Om(z;\tau).
\end{equation}
Equating the coefficients of $\olw{P_\Om}(z;\tau)$ in the last two expressions of $\olw{P_\Gamma}(x,y,z;\theta,\phi,\tau)$ gives
\begin{equation}
\olw{P}_{\Gamma/\Om}(x,y;\theta,\phi)=\sum_\La\frac{1}{\|P_\La\|^2}\,\olw{P}_{\Gamma/\La}(x;\theta)\olw{P}_{\La/\Om}(y;\phi).
\end{equation}
Finally,  \eqref{EqSkew2} is established
by interchanging $(x;\theta)$ and $(y;\phi)$, and by reversing the ordering of all the Grassmann variables.
\end{proof}

\section{Decomposition of Jack superpolynomials}\label{facpro}

Our main results, presented in Sections~\ref{spe1} and \ref{spe2},
 rely in an essential way on certain column-wise decomposition
properties of the Jack polynomials, presented in
Section~\ref{codec}, that generalize known properties of Jack polynomials.
The analogous row-wise decompositions, worked out in Section~\ref{rodec},
are given for completeness.

\subsection{Column operations}\label{codec}

If the first column of the diagram of $\La$ does not contain a circle,
we introduce the ``column-removal'' operation $\C$ defined such that
$\C \La$ is the
superpartition
whose diagram is obtained by removing the
first column of the diagram of $\La$ (the
operation is illustrated in Fig.~\ref{FigC}).

If the first column of the diagram of
$\La $ contains a circle, we define the ``circle-removal'' operation  $\widetilde{\C}$ such that the diagram of $\widetilde{\C} \La$ is obtained from that of $\La$ by removing the circle in the first column of the diagram of $\La$
(also illustrated in Fig.~\ref{FigC}).

\begin{figure}[h]\caption{Operators $\C$ and $\widetilde{\C}$}\label{FigC}
 \begin{equation*}\mathcal{C} \,:\quad {\tableau[scY]{&&&\\&&\bl\tcercle{}\\&\\&\bl\tcercle{}}}  \longmapsto \quad {\tableau[scY]{&&\\&\bl\tcercle{}\\ \\\bl\tcercle{}}}
\qquad \widetilde{\mathcal{C}} \,:\quad {\tableau[scY]{&&&\bl\tcercle{}\\&\\&\bl\tcercle{}\\\bl\tcercle{}}}  \longmapsto \quad {\tableau[scY]{&&&\bl\tcercle{}\\&\\&\bl\tcercle{}\\\bl }}
 \end{equation*}
\end{figure}

\begin{proposition}\label{PropFactoI}Let $\La$ be a superpartition having no parts equal to zero,
i.e., $\ell(\La)=\ell(\La^*)=\ell$.  Then
\begin{equation}
P_\La(x_1,\ldots,x_\ell;\theta_1,\ldots,\theta_\ell)=  x_1\cdots x_\ell \,
 P_{\C\La}(x_1,\ldots,x_\ell;\theta_1,\ldots,\theta_\ell) .\end{equation}
\end{proposition}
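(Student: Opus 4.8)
The plan is to recognize that multiplication by $x_1\cdots x_\ell$ is (in $\ell$ variables) precisely multiplication by the Jack superpolynomial $P_{(1^\ell)}=e_\ell$, so the statement becomes a special case of the Pieri-type rules of Section~\ref{sectstrips}, together with a check that the proportionality constant is $1$. More precisely, in $N=\ell$ bosonic variables one has $m_{(1^\ell)}(x)=x_1\cdots x_\ell$ and this monomial is the \emph{unique} monomial of its degree and length $\ell$, so $P_{(1^\ell)}(x_1,\ldots,x_\ell;\theta)=x_1\cdots x_\ell$ (there is nothing below it in dominance order with length $\leq \ell$). Thus I must expand the product $P_{(1^\ell)}\cdot P_{\C\La}$ into Jack superpolynomials using \eqref{EqProdJ}, i.e.
\begin{equation}
x_1\cdots x_\ell\, P_{\C\La} \;=\; \sum_{\Om} \frac{1}{\|P_\Om\|^2}\, g^{\Om}_{(1^\ell),\,\C\La}\, P_\Om ,
\end{equation}
and show that in $\ell$ variables the only surviving term is $\Om=\La$ with coefficient $g^{\La}_{(1^\ell),\C\La}/\|P_\La\|^2 = 1$.

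The key steps, in order. First, by Proposition~\ref{PropgI} (the vertical-strip case, $g^{\Om}_{\Gamma,1^n}\neq 0$ only if $\Om/\Gamma$ is a vertical $n$-strip), any $\Om$ appearing in the sum must be such that $\Om/\C\La$ is a vertical $\ell$-strip; equivalently $\Om^*/( \C\La)^*$ and $\Om^\circledast/(\C\La)^\circledast$ are both vertical $\ell$-strips. Second, impose the constraint $\ell(\Om)\leq \ell$ coming from working in $\ell$ variables together with $|\Om^*|=|(\C\La)^*|+\ell$: a vertical $\ell$-strip added to a diagram of length $\leq\ell$ that keeps the length $\leq\ell$ must add exactly one cell to each of the $\ell$ rows, i.e.\ $\Om^*=(\C\La)^*+(1^\ell)$, and similarly $\Om^\circledast=(\C\La)^\circledast+(1^\ell)$ — which is exactly the statement $\C\Om=\C\La$, forcing $\Om=\La$ because $\La$ (having no zero parts) is recovered from $\C\La$ by prepending a full first column of squares. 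Third, having isolated $\Om=\La$, I compute the coefficient: comparing the coefficient of $m_\La$ on both sides (using the triangularity \eqref{Ptriangular}: $P_{(1^\ell)}=m_{(1^\ell)}$ exactly, $P_{\C\La}=m_{\C\La}+\text{lower}$, $P_\La=m_\La+\text{lower}$, and noting $x_1\cdots x_\ell\, m_{\C\La}=m_\La$ in $\ell$ variables while $x_1\cdots x_\ell\cdot(\text{anything lower than }\C\La)$ stays strictly below $m_\La$), one reads off that the coefficient of $P_\La$ is $1$.

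The one point that needs care — and which I expect to be the main obstacle — is the claim that $x_1\cdots x_\ell\, m_{\C\La} = m_\La$ and more generally that the map $f\mapsto x_1\cdots x_\ell f$ sends $m_\Om$ to $m_{\Om+(1^\ell)}$ and is order-preserving for the dominance order on superpartitions of length $\leq\ell$; here one must track the normalization factor $n_\La!$ in \eqref{eqmono} and verify it behaves correctly (adding $1$ to every part of $\La^s$ does not change the multiplicities $n_{\La^s}(i)$, only shifts them, so $n_\La!=n_{\C\La}!$ and the $\pm 1$ coefficients match), as well as confirm that adding $(1^\ell)$ preserves the fermionic/bosonic structure so that $\Om+(1^\ell)$ is genuinely a superpartition with $\C(\Om+(1^\ell))=\Om$. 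Once this bookkeeping is settled, the dominance triangularity of $P_\La$ and the vertical-strip constraint from Proposition~\ref{PropgI} combine to give the result immediately; a cleaner alternative for the same bookkeeping is to apply the conjugation duality of Lemma~\ref{lemmaequivconj} to reduce the vertical-strip statement to the horizontal one of Proposition~\ref{PropgI}, but the direct argument above avoids conjugating $\La$.
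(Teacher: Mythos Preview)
Your argument is correct, but it is genuinely different from the paper's own proof. The paper does not invoke any Pieri rule here: it proves directly that $x_1\cdots x_\ell\, P_{\C\La}$ is a common eigenfunction of the operators $D$ and $\Delta$ with the eigenvalues $\varepsilon_\La(\alpha)$ and $\epsilon_\La(\alpha)$, and has the correct monomial triangularity, then appeals to the uniqueness part of Theorem~\ref{TheoEigenJack}. The eigenvalue computation is a short direct calculation using that $x_1\cdots x_\ell$ interacts simply with the differential pieces of $D$ and $\Delta$.

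Your route---recognizing $x_1\cdots x_\ell=P_{(1^\ell)}$, expanding via \eqref{EqProdJ}, and then using the vertical-strip constraint of Proposition~\ref{PropgI} together with the length bound $\ell(\Om)\leq\ell$ to force $\Om=\La$---is exactly the style of argument the paper uses for the \emph{row} decompositions (Propositions~\ref{PropFactoIdual} and~\ref{PropFactoIIdual}), so it is natural and in keeping with the rest of Section~\ref{facpro}. The trade-off is dependency: your proof rests on Proposition~\ref{PropgI}, which is established in Appendix~\ref{appenB} through the machinery of non-symmetric Jack polynomials, whereas the paper's eigenvalue argument is self-contained and needs only Theorem~\ref{TheoEigenJack}. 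On the other hand, your method unifies the column and row cases and avoids differentiating through $D$ and $\Delta$. The triangularity/normalization step (that $x_1\cdots x_\ell\, m_{\C\La}=m_\La$ in $\ell$ variables and that the dominance order is preserved) is handled identically in both proofs, and your remark that $n_{\C\La}!=n_\La!$ is the right check.
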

\begin{proof}
To simplify the notation, we will assume throughout the proof that
the polynomials are polynomials in the variables
$(x_1,\ldots,x_\ell;\theta_1,\ldots,\theta_\ell)$.  Since
in that case
$m_{1^\ell}=x_1\cdots x_\ell$,
we have to show that $ m_{1^\ell}\, P_{\C\La}=P_\La$.
According to Theorem~\ref{TheoEigenJack}, this amounts to prove
that $m_{1^\ell}\, P_{\C\La}$ is:  $(i)$
triangular with leading term $m_\La$ and, $(ii)$
an eigenfunction of  $D$ and $\Delta$ with eigenvalues
$\varepsilon_\La$ and $\epsilon_\La$ respectively.

From the definition of the monomial symmetric functions, we immediately get
\begin{equation}  m_{1^\ell}\, m_{\C\Gamma}=m_\Gamma,\end{equation}
for every $\Gamma$ {such that}  $\ell(\Gamma)=\ell(\Gamma^*)=\ell$.
From the diagrammatic representation of superpartitions,  it is also obvious that if $\ell(\Gamma)=\ell(\Gamma^*)=\ell$ then
\begin{equation}
\La\geq\Gamma\qquad\Longleftrightarrow\qquad \C\La\geq \C\Gamma.\end{equation}
Hence, we obtain
\begin{equation}
\begin{split}
m_{1^\ell}P_{\C\La}=m_{1^\ell}\big(m_{\C\La} +\sum_{\Om<\C\La}c_{\C\La,\Om}(\alpha)\, m_\Om\big)= & \, m_{1^\ell}\big(m_{\C\La} +\sum_{\C \Gamma<\C\La}c_{\C\La,\C \Gamma}
(\alpha) \, m_{\C \Gamma}\big) \\
= &  \,
m_\La+\sum_{\Gamma<\La}c_{\C\La,\C\Gamma}(\alpha) \, m_{\Gamma},
\end{split}
\end{equation}
which proves the triangularity of $m_{1^\ell}P_{\C\La}$.

We now compute the action of $D$ on $m_{1^\ell}P_{{\C \La}}$:
\begin{equation}
\begin{split}D(m_{1^\ell}P_{{\C \La}})&=D(m_{1^\ell})\,P_{\C \La}+m_{1^\ell}
D(P_{\C \La})+
\alpha\sum_ix_i\partial_{x_i}(m_{1^\ell})x_i\partial_{x_i}(P_{\C \La})\\
&=(\varepsilon_{1^\ell}(\alpha)+
\varepsilon_{\C \La}(\alpha)+ \alpha|{\C \La} |\, )m_{1^\ell}P_{\C \La}.
\end{split}\end{equation}
Using
\begin{equation}b(\C\La)=b(\La)-\frac{\ell(\ell-1)}{2},\quad b((\C\La)')=b(\La')-|\C\La|\quad
{\rm and} \quad \varepsilon_{1^\ell}(\alpha)=-\frac{\ell(\ell-1)}{2},
\end{equation}
which implies that $\varepsilon_{1^\ell}(\alpha)+\varepsilon_{\C\La}(\alpha)
+\alpha|\C\La| = \varepsilon_\La
(\alpha)$, we get
\begin{equation}
D(m_{1^\ell}P_{\C\La})=\varepsilon_\La(\alpha)m_{1^\ell}P_{\C \La}.\end{equation}
Similarly, the action of $\Delta$ on $m_{1^\ell}P_{\C \La}$ gives
\begin{equation}
\begin{split}
\Delta(m_{1^\ell}P_{\C \La})
&=\Delta(m_{1^\ell})\,P_{\C \La}+m_{1^\ell}\Delta(P_{ \C \La})+\alpha\sum_ix_i\partial_{x_i}(m_{1^\ell})\theta_i\partial_{\theta_i}(P_{\C \La}) \\
&=\left(0+\epsilon_{\C \La}(\alpha)+\alpha \,  m \right)m_{1^\ell}P_{\C \La}.
\end{split}
\end{equation}
Using $|(\C \La)^{a}|=|\La^{a}|-m$ and $|{(\C \La)'}^{a}|=|{\La'}^{a}|$, we are led to
\begin{equation}\Delta(m_{1^\ell}P_{\C \La})=\epsilon_\La(\alpha) \, m_{1^\ell}
P_{\C \La}.\end{equation}
We have established that $m_{1^\ell}P_{\C\La}$ has the right triangularity
property and satisfies the required eigenvalue problems.  We can thus
conclude that $m_{1^\ell}P_{\C\La}=P_{\La}$.
\end{proof}

\begin{proposition}\label{PropFactoII}Let $\La$ be a superpartition such that $\La_m=0$,
{i.e.,} $\ell(\La)=\ell(\La^*)+1$. Then
\begin{equation}
(-1)^{m-1}
\Bigl[ \partial_{\theta_\ell}\,P_\La(x_1,\ldots,x_\ell;\theta_1,\ldots,\theta_\ell)
\Bigr]_{x_\ell=0}
=
P_{\widetilde{\C}\La}(x_1,\ldots,x_{\ell-1};\theta_1,\ldots,\theta_{\ell-1}).
\end{equation}
\end{proposition}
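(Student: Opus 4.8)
The plan is to identify the left-hand side with a Jack superpolynomial by means of the eigenfunction characterization of Theorem~\ref{TheoEigenJack}. Writing $\ell=\ell(\La)$, set
\begin{equation*}
Q:=(-1)^{m-1}\bigl[\partial_{\theta_\ell}\,P_\La(x_1,\ldots,x_\ell;\theta_1,\ldots,\theta_\ell)\bigr]_{x_\ell=0},
\end{equation*}
a bihomogeneous symmetric superpolynomial in $(x_1,\ldots,x_{\ell-1};\theta_1,\ldots,\theta_{\ell-1})$ of degree $(n\,|\,m-1)$, the symmetry being inherited from the $S_{\ell-1}$-invariance of $P_\La$. By Theorem~\ref{TheoEigenJack} applied with $\ell-1$ variables, it suffices to show that $Q$ is triangular with leading monomial $m_{\widetilde{\C}\La}$, and that $D\,Q=\varepsilon_{\widetilde{\C}\La}(\alpha)\,Q$ and $\Delta\,Q=\epsilon_{\widetilde{\C}\La}(\alpha)\,Q$.

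For triangularity I would first establish the monomial identity
\begin{equation*}
(-1)^{m-1}\bigl[\partial_{\theta_\ell}\,m_\Om\bigr]_{x_\ell=0}=
\begin{cases} m_{\widetilde{\C}\Om}, & \text{if } \Om_m=0,\\ 0, & \text{if } \Om_m\neq 0,\end{cases}
\end{equation*}
for every superpartition $\Om$ of degree $(n|m)$ with $\ell(\Om)\le\ell$: in the sum \eqref{eqmono} the surviving terms are precisely those placing $\theta_\ell$ on the zero fermionic row, so that the exponent of $x_\ell$ is $0$; the signs cancel and $n_{\widetilde{\C}\Om}!=n_\Om!$ since $\Om^s$ is unchanged. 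Next, any $\Om<\La$ contributing to $P_\La$ satisfies $\ell(\Om^{\circledast})=\ell$ (indeed $\Om^{\circledast}\le\La^{\circledast}$ with equal size forces $\ell(\Om^{\circledast})\ge\ell(\La^{\circledast})=\ell$, while $\ell(\Om)\le\ell$); and if $\Om_m=0=\La_m$ then the diagrams of $\Om^{\circledast}$ and $\La^{\circledast}$ each end in a part equal to $1$, whose removal preserves dominance, while $\Om^*=(\widetilde{\C}\Om)^*\le(\widetilde{\C}\La)^*=\La^*$; hence $\widetilde{\C}\Om<\widetilde{\C}\La$. Applying $(-1)^{m-1}[\partial_{\theta_\ell}(\,\cdot\,)]_{x_\ell=0}$ to \eqref{Ptriangular} and using $\La_m=0$ for the leading term then gives $Q=m_{\widetilde{\C}\La}+\sum_{\Upsilon<\widetilde{\C}\La}(\cdots)\,m_\Upsilon$.

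For the eigenvalue equations I would commute $\partial_{\theta_\ell}$ past $D$ and $\Delta$ (written in $\ell$ variables) and then specialize $x_\ell=0$, in the spirit of the operator computations used to prove Proposition~\ref{PropFactoI}. Using $\partial_{\theta_\ell}^2=0$ and the graded Leibniz rule, and observing that every term of $D$ carrying $x_\ell$ in a differential factor or in the numerator of its coefficient vanishes at $x_\ell=0$, one obtains the clean intertwiner $[\partial_{\theta_\ell}(D\,f)]_{x_\ell=0}=D\,[\partial_{\theta_\ell}f]_{x_\ell=0}$ (the two sides acting on $\ell$ and $\ell-1$ variables, respectively) for an arbitrary superpolynomial $f$; combined with $(\widetilde{\C}\La)^*=\La^*$ and $((\widetilde{\C}\La)')^*=(\La')^*$ (deleting the circle leaves the set of squares untouched), which give $b(\widetilde{\C}\La)=b(\La)$, $b((\widetilde{\C}\La)')=b(\La')$ and hence $\varepsilon_{\widetilde{\C}\La}=\varepsilon_\La$, this yields $D\,Q=\varepsilon_{\widetilde{\C}\La}(\alpha)\,Q$. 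The computation for $\Delta$ is the delicate one, because the coefficient $(x_i\theta_\ell+x_\ell\theta_i)/(x_i-x_\ell)$ does \emph{not} vanish at $x_\ell=0$ but rather becomes $\theta_\ell$; one is led to
\begin{equation*}
\bigl[\partial_{\theta_\ell}(\Delta f)\bigr]_{x_\ell=0}=\Delta\,\bigl[\partial_{\theta_\ell}f\bigr]_{x_\ell=0}-(\ell-1)\bigl[\partial_{\theta_\ell}f\bigr]_{x_\ell=0}+\sum_{i=1}^{\ell-1}\Bigl[\partial_{\theta_i}\bigl([f]_{\theta_\ell=0}\bigr)\Bigr]_{x_\ell=0}.
\end{equation*}
For $f=P_\La$ the last sum vanishes: since $\ell(\La)=\ell$ exceeds $\ell-1$, the Jack superpolynomial $P_\La$ in $\ell-1$ variables is zero, i.e.\ $[P_\La]_{x_\ell=0,\,\theta_\ell=0}=0$, whence $\bigl[[P_\La]_{\theta_\ell=0}\bigr]_{x_\ell=0}=0$ and therefore $\bigl[\partial_{\theta_i}([P_\La]_{\theta_\ell=0})\bigr]_{x_\ell=0}=0$ for every $i<\ell$. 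Together with $\Delta P_\La=\epsilon_\La(\alpha)P_\La$ this gives $\Delta\,Q=\bigl(\epsilon_\La(\alpha)+\ell-1\bigr)\,Q$, which matches $\epsilon_{\widetilde{\C}\La}(\alpha)\,Q$ once one checks, from the diagrammatic description of $\widetilde{\C}$, that $|(\widetilde{\C}\La)^a|=|\La^a|$ and $|((\widetilde{\C}\La)')^a|=|(\La')^a|-(\ell-1)$ (removing the bottom circle turns the first row of $\La'$, which has $\ell-1$ squares, from fermionic into bosonic). Triangularity and the two eigenvalue equations then force $Q=P_{\widetilde{\C}\La}$.

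The step I expect to be the main obstacle is the $\Delta$-intertwiner: correctly tracking the signs produced by the odd derivation $\partial_{\theta_\ell}$ acting on the $\theta$-linear coefficients of $\Delta$, isolating the non-vanishing boundary contributions at $x_\ell=0$, and then invoking $[P_\La]_{x_\ell=0,\theta_\ell=0}=0$ (that is, that $\ell(\La)$ exceeds the number $\ell-1$ of variables) to discard the residual sum $\sum_{i<\ell}\bigl[\partial_{\theta_i}([P_\La]_{\theta_\ell=0})\bigr]_{x_\ell=0}$.
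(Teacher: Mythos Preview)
Your proposal is correct and follows essentially the same strategy as the paper: verify triangularity on monomials and then check the $D$- and $\Delta$-eigenvalue equations via operator intertwiners, invoking Theorem~\ref{TheoEigenJack}. The triangularity and the $D$-step are handled identically.

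The only noteworthy difference is in the $\Delta$-step. The paper establishes the shift $\epsilon_{\La}\to\epsilon_{\La}+\ell-1$ by computing $\bigl[\partial_{\theta_\ell}(\Delta_{\ell-1}-\Delta_\ell)\,m_\Om\bigr]_{x_\ell=0}$ monomial by monomial, exploiting the explicit form \eqref{Eqm} of $[m_\Om]_{x_\ell=0}$ when $\ell(\Om^{\circledast})=\ell$. You instead derive a general intertwiner valid for arbitrary $f$, with an extra term $\sum_{i<\ell}\bigl[\partial_{\theta_i}([f]_{\theta_\ell=0})\bigr]_{x_\ell=0}$ that you then kill for $f=P_\La$ using the stability property $\rho_{\ell,\ell-1}(P_\La)=0$. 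Both routes are short and valid; yours is slightly more conceptual (a clean operator identity plus one structural fact about $P_\La$), while the paper's avoids the extra term altogether by never leaving the span of monomials with $\ell(\Om^{\circledast})=\ell$.
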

\begin{proof}Let
$(x;\theta)=(x_1,\ldots,x_\ell;\theta_1,\ldots,\theta_\ell)$ and $(x_-;\theta_-) = (x_1,\ldots,x_{\ell-1};\theta_1,\ldots,\theta_{\ell-1})$.
As in Proposition~\ref{PropFactoI}, we will prove that
$\partial_{\theta_\ell} P_\La(x;\theta)$ evaluated at ${x_\ell=0}$ has the
right triangularity and satisfies the required eigenvalue problems
(see Theorem~\ref{TheoEigenJack}).

We have, for any superpartition $\Om$ with $\ell(\Om^{\circledast})
= \ell$,
\begin{equation}\label{Eqm}
(-1)^{m-1}
\Bigl[m_\Om(x;\theta)\Bigr]_{x_\ell=0}=
\begin{cases}\theta_{\ell} \, m_{\tilde{\C}\Om}(x_-;\theta_-) & {\rm if~}\Om_m=0\\
0 & {\rm if~}\Om_m>0. \end{cases}
\end{equation}
Moreover, one readily shows that whenever $\La_m=\Om_m=0$, we have
\begin{equation}
\La\geq\Om\qquad\Longleftrightarrow\qquad \tilde{\C}\La\geq \tilde{\C}\Om
.\end{equation}
Therefore, from the expansion \eqref{Ptriangular}
of $P_{\La}(x,\theta)$ in terms of monomials,
we immediately get
\begin{equation} \label{EqTriangzero}
\begin{split}
(-1)^{m-1}\Bigl[\partial_{\theta_\ell}P_\La(x;\theta)\Bigr]_{x_\ell=0}
& =m_{\tilde{\C}\La}(x_-;\theta_-)+\sum_{\substack{\Om<\La\\\Om_m=0}}c_{\La,\Om}(\alpha)\,m_{\tilde{\C}\Om}(x_-;\theta_-) \\
& =m_{\tilde{\C}\La}(x_-;\theta_-)+
\sum_{\substack{\tilde{\C} \Om< \tilde{\C} \La\\\Om_m=0}}c_{\La,\Om}(\alpha)\,m_{\tilde{\C}\Om}(x_-;\theta_-),
\end{split}
\end{equation}
which gives the desired triangularity. Observe that we have used the fact
that if $\Om \leq \La$ then $\ell(\Om^{\circledast})
\geq \ell(\La^{\circledast})=\ell$.

Let $D_{\ell}$ and $D_{\ell-1}$
stand for the operator $D$ in the variables
$(x,\theta)$ and $(x_-,\theta_-)$ respectively, and
similarly for  $\Delta_{\ell}$ and $\Delta_{\ell-1}$.
For any superpolynomial $f(x,\theta)$, straightforward calculations yield
\begin{equation}
\Bigl[ \partial_{\theta_\ell} D_{\ell} f(x,\theta)  \Bigr]_{x_\ell=0} =
\Bigl[ \partial_{\theta_\ell} D_{\ell-1} f(x,\theta)  \Bigr]_{x_\ell=0}
= D_{\ell-1}\Bigl[ \partial_{\theta_\ell}  f(x,\theta)  \Bigr]_{x_\ell=0} .
\end{equation}
Therefore
\begin{equation} \label{EqEigenD}
D_{\ell-1}\Bigl[\partial_{\theta_\ell}P_\La(x,\theta)\Bigr]_{x_\ell=0}=
\Bigl[\partial_{\theta_\ell} D_{\ell} P_\La(x,\theta)\Bigr]_{x_\ell=0}
=\varepsilon_{\tilde{\C}\La}(\alpha)\Bigl[\partial_{\theta_\ell}
P_\La(x,\theta) \Bigr]_{x_\ell=0},
\end{equation}
since $\varepsilon_\La(\alpha)
=\varepsilon_{\tilde{\C}\La}(\alpha)$.

The second eigenvalue problem is somewhat more involved.   We have,
for any superpartition $\Om$ with $\ell(\Om^{\circledast})= \ell$,
\begin{equation}
\begin{split}
\Bigl[\partial_{\theta_\ell}(\Delta_{\ell-1}-\Delta_{\ell})\,
m_\Om(x;\theta)\Bigr]_{x_\ell=0} & = \partial_{\theta_\ell} \Bigl[(\Delta_{\ell-1}-\Delta_{\ell}) \Bigr]_{x_\ell=0} \Bigl[ m_\Om(x;\theta)\Bigr]_{x_\ell=0} \\
& = \partial_{\theta_\ell} \sum_{j=1}^{\ell-1} \theta_\ell
(\partial_{\theta_\ell} -\partial_{\theta_j})
\Bigl[ m_\Om(x;\theta)\Bigr]_{x_\ell=0} \\
&=
\left \{ \begin{array}{ll}
(\ell-1)(-1)^{m-1}m_{\tilde{\C}\Om}(x_-;\theta_-) & {\rm if~} \Omega_m=0 \\
0 & {\rm if~} \Omega_m> 0
\end{array} \right. \\
& = (\ell-1) \Bigl[\partial_{\theta_\ell}\,
m_\Om(x;\theta)\Bigr]_{x_\ell=0} ,
\end{split} \end{equation}
 where we have used \eqref{Eqm} in the next to last step.
This leads to
\begin{equation} \label{EqEigenDelta}
\begin{split}
\Delta_{\ell-1}  \Bigl[\partial_{\theta_\ell}P_\La(x,\theta)\Bigr]_{x_\ell=0}
  & =
\Bigl[\partial_{\theta_\ell} (\Delta_{\ell}+(\Delta_{\ell-1} - \Delta_{\ell}))
P_\La(x,\theta)\Bigr]_{x_\ell=0} \\
& = (\epsilon_{\La}(\alpha)+\ell-1) \Bigl[\partial_{\theta_\ell}
P_\La(x,\theta)\Bigr]_{x_\ell=0} \\
& = \epsilon_{\tilde{\C}\La}(\alpha) \Bigl[\partial_{\theta_\ell}
P_\La(x,\theta)\Bigr]_{x_\ell=0},
\end{split}
\end{equation}
since $|(\tilde{\C}\La)^{a}|= |\La^{a}|$,
$|({\tilde{\C}\La)'}^{a}| = |\La^{a}|-\La'_1$, and $\La'_1=\ell-1$.

Using Eqs \eqref{EqTriangzero},\eqref{EqEigenD} and \eqref{EqEigenDelta},
we conclude from Theorem \ref{TheoEigenJack} that
\begin{equation}(-1)^{m-1}\Bigl[\partial_{\theta_\ell}P_\La(x;\theta)\Bigr]_{x_\ell=0}=P_{ \tilde{\C}\La}(x_-;\theta_-) \, .\end{equation}
\end{proof}

\subsection{Row operations}\label{rodec}
Similarly to the column case, we can introduce two row operations whose
actions on diagrams is illustrated in Fig. \ref{FigRtilde}.
The following two propositions show how the polynomial $P_\La$ can be
row-wise
deconstructed  (at the expense of losing the first variable).
\begin{figure}[h]\caption{Operators $\R$ and $\widetilde{\R}$}\label{FigRtilde}
\begin{equation*}\mathcal{R} \,:\quad {\tableau[scY]{&&&\\&&\bl\tcercle{}\\&\\&\bl\tcercle{}}}  \longmapsto \quad {\tableau[scY]{ &&\bl\tcercle{}\\& \\&\bl\tcercle{}\\ \bl }}
\qquad \widetilde{\mathcal{R}} \,:\quad {\tableau[scY]{&&&\bl\tcercle{}\\&\\&\bl\tcercle{}\\&\bl }}  \longmapsto \quad {\tableau[scY]{&&&\bl \\&\\&\bl\tcercle{}\\&\bl }}
 \end{equation*}
\end{figure}

\begin{proposition}\label{PropFactoIdual}
Let $(x_-;\theta_-)=(x_2,x_3,\dots;\theta_2, \theta_3,\dots)$.  Let also $\La$ be a superpartition whose fermionic degree is
$m$. If the first row of the diagram of $\Lambda$ is bosonic
(that is, $\La^*_1= \La^\circledast_1=k$),
then
\begin{equation}\coeff{x_1^k} P_\La(x;\theta) = P_{\R\La}(x_-;\theta_-).
\end{equation}
\end{proposition}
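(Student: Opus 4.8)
The plan is to obtain this identity as the ``one-variable restriction'' of the skew expansion of Proposition~\ref{PropSkew}, together with the Pieri-type restrictions of Propositions~\ref{PropgI} and~\ref{PropgII}. Writing $(x_-;\theta_-)=(x_2,x_3,\dots;\theta_2,\theta_3,\dots)$, I would first specialize \eqref{EqSkew} by taking the alphabet ``$(x;\theta)$'' there to be the single pair $(x_1;\theta_1)$ and the alphabet ``$(y;\phi)$'' to be $(x_-;\theta_-)$, which gives $P_\La(x;\theta)=\sum_{\Gamma}\|P_\Gamma\|^{-2}\,P_\Gamma(x_1;\theta_1)\,P_{\La/\Gamma}(x_-;\theta_-)$. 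In a single bosonic variable $m_\Gamma$ vanishes unless $\ell(\Gamma)\le 1$, and when $\ell(\Gamma)\le 1$ the only superpartition of its bidegree with a single row is $\Gamma$ itself, so $P_\Gamma(x_1;\theta_1)=m_\Gamma(x_1;\theta_1)$: this is $x_1^{k}$ for $\Gamma=(k)$, $\theta_1x_1^{k}$ for $\Gamma=(k;)$, and $1$ for $\Gamma=\emptyset$. Hence for $k\ge 1$ only $\Gamma=(k)$ and $\Gamma=(k;)$ can carry an $x_1^{k}$, so
\[
\coeff{x_1^k}P_\La(x;\theta)=\frac{1}{\|P_{(k)}\|^2}\,P_{\La/(k)}(x_-;\theta_-)+\frac{\theta_1}{\|P_{(k;)}\|^2}\,P_{\La/(k;)}(x_-;\theta_-).
\]
Since $\La^{\circledast}$ has exactly $k$ columns, a horizontal $\tilde k$-strip would require $\La^{\circledast}/\Om^{\circledast}$ to have $k+1$ cells in $k$ columns, which is impossible; so Proposition~\ref{PropgII} gives $g^{\La}_{\Om,(k;)}=0$ for all $\Om$, i.e.\ $P_{\La/(k;)}=0$, and the second term drops out. (When $k=0$ the superpartition $\La$ is empty and the statement is trivial.)

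It remains to show $P_{\La/(k)}=\|P_{(k)}\|^2\,P_{\R\La}$. Writing $P_{\La/(k)}=\sum_{\Om}\|P_\Om\|^{-2}\,g^{\La}_{(k),\Om}\,P_\Om$ and using Proposition~\ref{PropgI}, the coefficient $g^{\La}_{(k),\Om}$ vanishes unless $\La/\Om$ is a horizontal $k$-strip, that is (Definition~\ref{strips}), unless both $\La^*/\Om^*$ and $\La^{\circledast}/\Om^{\circledast}$ are horizontal $k$-strips. I would then invoke the elementary fact that a horizontal $k$-strip $\lambda/\mu$ with $\lambda_1=k$ forces $\mu=(\lambda_2,\lambda_3,\dots)$: $\lambda$ has exactly $\lambda_1=k$ columns, a horizontal strip removes at most one cell per column, so the $k$ removed cells lie in distinct columns---one per column---and keeping $\mu$ a partition forces each removed cell to be the bottom one in its column, i.e.\ $\mu'=\lambda'-(1^k)$. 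Applying this to $\lambda=\La^*$ (using $\La^*_1=k$) and to $\lambda=\La^{\circledast}$ (using $\La^{\circledast}_1=k$), and noting that deleting the top, purely bosonic, row of $\La$ acts on $\La^*$ and $\La^{\circledast}$ in exactly this way, we conclude that $\Om=\R\La$ is the unique superpartition with $g^{\La}_{(k),\Om}\neq 0$. Thus $P_{\La/(k)}$, and therefore $\coeff{x_1^k}P_\La(x;\theta)$, is a scalar multiple of $P_{\R\La}(x_-;\theta_-)$.

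To pin down the scalar I would compare the coefficient of the monomial $m_{\R\La}$. From the triangular expansion $P_\La=m_\La+\sum_{\Om<\La}c_{\La\Om}m_\Om$ one checks directly from \eqref{eqmono} that $\coeff{x_1^k}m_\La(x;\theta)=m_{\R\La}(x_-;\theta_-)$ (the combinatorial factor $n_\La!$ in \eqref{eqmono} is precisely what makes this extraction come out with coefficient $1$), while for $\Om<\La$ one has $\coeff{x_1^k}m_\Om(x;\theta)=m_{\Om\setminus k}(x_-;\theta_-)$ or $0$, where $\Om\setminus k$ denotes the removal of one bosonic part equal to $k$ and satisfies $\Om\setminus k<\R\La$ whenever $\Om<\La$. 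Hence $m_{\R\La}$ occurs in $\coeff{x_1^k}P_\La$ with coefficient exactly $1$, and since $P_{\R\La}$ is monic the scalar equals $1$, which proves the proposition. The only step requiring genuine care is the combinatorial identification of the unique contributing strip in the second paragraph; everything else is a matter of assembling Propositions~\ref{PropSkew}, \ref{PropgI} and~\ref{PropgII}. (Alternatively, in the spirit of the proofs of Propositions~\ref{PropFactoI} and~\ref{PropFactoII}, one could verify directly that $\coeff{x_1^k}P_\La$ has leading monomial $m_{\R\La}$ and is a common eigenfunction of $D$ and $\Delta$ in the variables $(x_-;\theta_-)$ with eigenvalues $\varepsilon_{\R\La}(\alpha)$ and $\epsilon_{\R\La}(\alpha)$, and then invoke Theorem~\ref{TheoEigenJack}.)
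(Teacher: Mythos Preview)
Your argument is correct and follows essentially the same route as the paper's proof: specialize the skew expansion of Proposition~\ref{PropSkew} to one variable, eliminate all but the $P_{\La/(k)}$ term, use Proposition~\ref{PropgI} to see that the only surviving $\Om$ is $\R\La$, and fix the constant via $\coeff{x_1^k} m_\La = m_{\R\La}$. The only cosmetic difference is that the paper kills the $\tilde k$ contribution by first invoking Corollary~\ref{coro11} (so the fermionic sum stops at $r=k-1$ and never reaches $x_1^k$), whereas you let the $\theta_1 x_1^k$ term appear and then annihilate $P_{\La/(k;)}$ directly via Proposition~\ref{PropgII}; these are the same observation phrased two ways.
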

\begin{proof}  We have from Proposition~\ref{PropSkew} that
\begin{equation} \label{eq48}
{P}_{\La}(x,\theta) = \sum_{\Om} \frac{1}{\|P_\Om\|^2}
{P}_{\Om}(x_1,\theta_1) \, {P}_{\La/\Om}(x_-,\theta_-).
\end{equation}
Using
\begin{equation} \label{eq49}
{P}_{\Om}(x_1,\theta_1) =
\left\{\begin{array}{ll}
x_1^r  & {\rm if~} \Omega=r \\
\theta_1 x_1^r  & {\rm if~} \Omega=\tilde r \\
0 & {\rm otherwise},
\end{array}
\right.
\end{equation}
and the fact that
$P_{\La/\Om}=0$ unless $\Om \subseteq \La$ from Corollary~\ref{coro11}
(that is, unless $\Omega_1^* \leq k$ and $\Omega_1^{\circledast} \leq k)$,
we obtain
\begin{equation}
{P}_{\La}(x,\theta) = \sum_{r= 0}^{k} \frac{1}{\|P_{r}\|^2}  x_1^r
{P}_{\La/r}(x_-,\theta_-) +
 \sum_{r=0}^{k-1} \frac{1}{\|P_{\tilde r}\|^2} \theta_1 x_1^r
{P}_{\La/ \tilde r}(x_-,\theta_-) \, ,
\end{equation}
which immediately gives
\begin{equation}
\coeff{x_1^k} P_\La(x;\theta) \propto P_{\La/k} (x_-,\theta_-) .
\end{equation}
Moreover, we have from \eqref{defskew} that
\begin{equation}
{P}_{\La/k} (x_-,\theta_-)  = \sum_{\Om} \frac{g_{\Om, k}^{\La}}{\|P_\Om\|^2}
\, {P}_{\Om}(x_-,\theta_-),
\end{equation}
where we recall from Proposition \ref{PropgI} that
 $g_{\Om, k}^{\La}\ne 0$ only if
  $\La/\Om$ is a
horizontal $k$-strip.
Now, the only superpartition $\Om$ such that $\La/\Om$ is a horizontal
$k$-strip is
$\R \La$, and therefore
\begin{equation}
\coeff{x_1^k} P_\La(x;\theta) \propto P_{\R \La} (x_-,\theta_-).
\end{equation}
Finally, since
\begin{equation}
\coeff{x_1^k}  \, m_\La(x;\theta)  = m_{\R \La} (x_-,\theta_-),
\end{equation}
we have that $\coeff{x_1^k} \,  P_\La(x;\theta)$ is monic and the proposition follows.
\end{proof}

\begin{proposition}\label{PropFactoIIdual}Let $(x_-;\theta_-)
=(x_2,x_3,\dots;\theta_2, \theta_3,\dots)$.
Let also $\La$ be a superpartition whose fermionic degree
is $m$. If the first row of the diagram of $\La$ is fermionic
(that is, $\La_1^*=\La_1^\circledast-1=k$),
then
\begin{equation}\coeff{x_1^k}\, \partial_{\theta_1} \, P_\La(x;\theta)
= P_{\R\widetilde{\R}\La}(x_-;\theta_-).
\end{equation}
\end{proposition}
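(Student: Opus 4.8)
First I would reproduce the opening of the proof of Proposition~\ref{PropFactoIdual}. Write $(x_-;\theta_-)=(x_2,x_3,\dots;\theta_2,\theta_3,\dots)$ and set $k=\La_1^*=\La_1^\circledast-1$ (so $\La_1=k$ in the notation of Definition~\ref{defsuperpart2}). Equation~\eqref{eq48}, the vanishing $P_{\La/\Om}=0$ unless $\Om\subseteq\La$ from Corollary~\ref{coro11} (i.e.\ $\Om_1^*\le k$ and $\Om_1^\circledast\le k+1$), and \eqref{eq49} give
\begin{equation*}
P_\La(x;\theta)=\sum_{r=0}^{k}\frac{1}{\|P_r\|^2}\,x_1^r\,P_{\La/r}(x_-;\theta_-)+\sum_{r=0}^{k}\frac{1}{\|P_{\tilde r}\|^2}\,\theta_1 x_1^r\,P_{\La/\tilde r}(x_-;\theta_-).
\end{equation*}
Applying $\partial_{\theta_1}$ kills the first sum, and since $\theta_1$ sits at the extreme left of every term of the second sum, $\partial_{\theta_1}\bigl(\theta_1 x_1^r P_{\La/\tilde r}(x_-;\theta_-)\bigr)=x_1^r P_{\La/\tilde r}(x_-;\theta_-)$ with no sign. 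Extracting the coefficient of $x_1^k$ keeps only $r=k$, so $\coeff{x_1^k}\partial_{\theta_1}P_\La(x;\theta)=\|P_{\tilde k}\|^{-2}P_{\La/\tilde k}(x_-;\theta_-)$, and in particular $\coeff{x_1^k}\partial_{\theta_1}P_\La(x;\theta)\propto P_{\La/\tilde k}(x_-;\theta_-)$.

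Next I would pin down $P_{\La/\tilde k}$. Expanding via \eqref{defskew}, $P_{\La/\tilde k}=\sum_\Om \|P_\Om\|^{-2} g^\La_{\Om,\tilde k}P_\Om$, and Proposition~\ref{PropgII} forces $g^\La_{\Om,\tilde k}=0$ unless $\La/\Om$ is a horizontal $\tilde k$-strip, i.e.\ $\La^*/\Om^*$ is a horizontal $k$-strip and $\La^\circledast/\Om^\circledast$ a horizontal $(k+1)$-strip. Here $k=\La^*_1$ and $k+1=\La^\circledast_1$, and this determines $\Om$ uniquely: if $\lambda/\mu$ is a horizontal strip then $\mu_i\ge\lambda_{i+1}$ for all $i$, hence $|\mu|\ge|\lambda|-\lambda_1$, with equality (forced by $|\lambda/\mu|=\lambda_1$) exactly when $\mu_i=\lambda_{i+1}$; thus necessarily $\Om^*=(\La^*_2,\La^*_3,\dots)$ and $\Om^\circledast=(\La^\circledast_2,\La^\circledast_3,\dots)$. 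This pair is a genuine superpartition, since $\Om^\circledast/\Om^*$ is just the part of the horizontal-and-vertical strip $\La^\circledast/\La^*$ lying in rows $\ge2$, and it equals $\R\widetilde{\R}\La$: indeed $\widetilde{\R}$ converts the fermionic first row of $\La$ into a bosonic one by turning the circle into a square, so $(\widetilde{\R}\La)^*=(k+1,\La^*_2,\dots)$ and $(\widetilde{\R}\La)^\circledast=\La^\circledast$, after which $\R$ removes that bosonic first row. Hence $P_{\La/\tilde k}(x_-;\theta_-)\propto P_{\R\widetilde{\R}\La}(x_-;\theta_-)$, and so $\coeff{x_1^k}\partial_{\theta_1}P_\La(x;\theta)\propto P_{\R\widetilde{\R}\La}(x_-;\theta_-)$.

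Finally I would fix the proportionality constant to $1$ using monic leading terms, exactly as at the end of the proof of Proposition~\ref{PropFactoIdual}. From \eqref{eqmono}, the only permutations $\sigma$ contributing to $\coeff{x_1^k}\partial_{\theta_1}m_\La(x;\theta)$ are those with $\sigma(1)=1$: then $x_1$ carries the exponent $\La_1=k$ while $\theta_1$, leftmost in $\theta_{\sigma(1)}\cdots\theta_{\sigma(m)}$, is differentiated away; any other $\sigma$ contributes either an $x_1$-exponent $<k$ or no $\theta_1$ at all. Reindexing the surviving sum over the permutations of $\{2,\dots,N\}$ yields precisely $m_{\R\widetilde{\R}\La}(x_-;\theta_-)$, the factor $n_\La!$ being unchanged because $\R\widetilde{\R}$ leaves the bosonic parts $\La^s$ fixed. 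Therefore $\coeff{x_1^k}\partial_{\theta_1}P_\La(x;\theta)$ is monic with leading monomial $m_{\R\widetilde{\R}\La}(x_-;\theta_-)$, which together with the proportionality above yields the statement (and, as a byproduct, $g^\La_{\R\widetilde{\R}\La,\tilde k}\ne0$). The step I expect to require the most care is the combinatorial uniqueness — that a horizontal $\tilde k$-strip $\La/\Om$ with $k=\La_1^*$ forces $\Om=\R\widetilde{\R}\La$, including the bookkeeping matching this $\Om$ with the diagrammatic action of $\R\widetilde{\R}$; the rest is a transcription of the bosonic-row argument with Proposition~\ref{PropgII} in place of Proposition~\ref{PropgI}.
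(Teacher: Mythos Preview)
Your argument is correct and follows the paper's own proof essentially verbatim: expand $P_\La$ via \eqref{eq48}--\eqref{eq49}, isolate $P_{\La/\tilde k}$, use Proposition~\ref{PropgII} to reduce to a single $\Om=\R\widetilde{\R}\La$, and fix the constant by the monomial leading term. One harmless slip: $\widetilde{\R}$ \emph{removes} the circle from the first row rather than turning it into a square (so $(\widetilde{\R}\La)^*=\La^*$ and $(\widetilde{\R}\La)^\circledast=(k,\La^\circledast_2,\dots)$), but either description yields the same $\R\widetilde{\R}\La$ after deleting the now-bosonic first row, so your conclusion is unaffected.
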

\begin{proof} The proof is essentially the same as that of Proposition~\ref{PropFactoIdual}.
Using Eqs. \eqref{eq48} and \eqref{eq49}, and the fact that $P_{\La/\Om}=0$ unless $\Om \subseteq \La$
(that is, unless $\Omega_1^* \leq k$ and $\Omega_1^{\circledast} \leq k+1)$,
we obtain
\begin{equation}
{P}_{\La}(x,\theta) = \sum_{r=0}^{k} \frac{1}{\|P_{r}\|^2}
 x_1^r {P}_{\La/r}(x_-,\theta_-) +
 \sum_{r=0}^{k}  \frac{1}{\|P_{\tilde r}\|^2} \theta_1 x_1^r
{P}_{\La/\tilde r}(x_-,\theta_-)\, ,
\end{equation}
which immediately gives
\begin{equation}
\coeff{x_1^k} \, \partial_{ \theta_1} P_\La(x;\theta) \propto P_{\La/\tilde k} (x_-,\theta_-).
\end{equation}
Now
\begin{equation}
P_{\La/\tilde k} (x_-,\theta_-)  = \sum_{\Om} \frac{g_{\Om, \tilde k}^{\La}}{\|P_\Om\|^2}
\, P_{\Om}(x_-,\theta_-)
\end{equation}
is such that $g_{\Om ,\tilde k}^{\La}=0$ unless $\La/\Om$ is a horizontal
$\tilde k$-strip, which readily yields
\begin{equation}
\coeff{x_1^k} \, \partial_{\theta_1} P_\La(x;\theta) \propto
P_{\R \tilde{\R} \La} (x_-,\theta_-) .
\end{equation}
Finally, since
\begin{equation}
\coeff{x_1^k}  \, \partial_{\theta_1}  m_\La(x;\theta)  = m_{\R \tilde {\R}\La} (x_-,\theta_-),
\end{equation}
we have that $\coeff{x_1^k} \, \partial_{\theta_1} P_\La(x;\theta)$ is monic and the proposition follows.

\end{proof}

\section{Evaluation formulas}\label{speS}
We now come to our first main results: the derivation of evaluation
formulas for the Jack superpolynomials.   In what follows, it will prove  convenient
 to work with a
different normalization of the Jack polynomials in superspace.
Let $\La_{{\rm min}}$ be the lowest superpartition  of degree $(n|m)$
in the dominance ordering, {namely}:
\begin{equation}\label{minide}
\Lambda_{\mathrm{min}}:=(\delta_m\,;\,
1^{\ell_{n,m}}\,) \, ,\end{equation} where
\begin{equation}\label{delnm}
\ell_{n,m}:=n-|\delta_m| \aand \delta_m:=(m-1,m-2,\ldots,0)
\, .\end{equation}
Let also
$c_\La^{\mathrm{min}}(\alpha)$
stand for the coefficient of
$\ell_{n,m}! \,m_{\Lambda_{\mathrm{min}}}$ in the monomial expansion of
$P_\La$.

\begin{definition}\label{defJackJ}
We define the non-monic Jack symmetric function in superspace as
\begin{equation}\label{eqnormalJ}
J_\La:= v_\La(\alpha) P_\La=\frac{1}{c_\La^{\mathrm{min}}(\alpha)} P_\La.\end{equation}
\end{definition}

 This normalization, which is such that the coefficient of $m_{\Lambda_{\mathrm{min}}}$
in $J_{\La}$ is $\ell_{n,m}!$,
reduces to the integral form of the Jack polynomials  \cite{Stan}
when $m=0$.
We define the expansion coefficients of $J_\La$ as
\begin{equation}
J_\La=\sum_{\Om\leq\La}v_{\La\Om}(\alpha)\,m_\Om,\end{equation}
 with the identification:
 \begin{equation}v_\La (\alpha)\equiv v_{\La\La} (\alpha).\end{equation}

\begin{remark} It has been conjectured in {\cite[Conj. 33]{DLMadv}} that the coefficients $v_{\La\Om}(\alpha)$ belong to $\mathbb Z[\alpha]$.
This conjecture is still open.
\end{remark}

\subsection{First evaluation formula}\label{spe1}
Let $F(x;\theta)$ be a polynomial in superspace of fermionic degree $m$.
The evaluation of  $F(x;\theta)$ is defined as
\begin{equation}\label{EqDefSpecialI}
\Sp_{N,m}[F(x;\ta)] := \left[
\frac{\partial_{\theta_m} \cdots \partial_{\theta_1}
F(x;\theta)}{V_m(x)}\right]_{\substack{x_1=\ldots=x_N=1}},\end{equation}
where $m\leq N$ and where
\begin{equation}
V_m(x)={\prod_{1\leq i<j\leq m}(x_i-x_j)}
\end{equation}
is the Vandermonde determinant in the variables $x_1,\dots,x_m$.
Note that it is
understood that $V_m(x)=1$ when $m=0$ or $1$, and thus
this evaluation reduces to the standard evaluation at $x_1=\cdots=x_N=1$
when $m=0$.

It will prove useful to reexpress the division by the Vandermonde determinant $V_m(x)$ in the evaluation \eqref{EqDefSpecialI}
as a differentiation with respect to the operator
\begin{equation}
\partial_x^{\delta} :=  \left[ \prod_{i=1}^{m-1} \frac{1}{(m-i)!} \right]
\partial_{x_1}^{m-1} \partial_{x_2}^{m-2} \cdots \partial_{x_{m-1}}.
\end{equation}
\begin{lemma} Let $F(x;\theta)$ be a polynomial in superspace of fermionic degree $m$.  Then
\begin{equation}\label{Spasder}
\Sp_{N,m}[F(x;\ta)]=\left[
 \partial_x^{\delta}
\partial_{\theta_m} \cdots \partial_{\theta_1}
F(x;\theta)\right]_{\substack{x_1=\ldots=x_N=1}}\end{equation}
\end{lemma}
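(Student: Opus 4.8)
The plan is to strip off the Grassmann variables, reducing \eqref{Spasder} to an identity about ordinary polynomials, and then to dispatch that identity with a short computation involving the Vandermonde determinant. First I would set $G(x):=\partial_{\theta_m}\cdots\partial_{\theta_1}F(x;\theta)$, which is a polynomial in $x_1,\dots,x_N$ alone; in the notation of \eqref{Eqformsuperpoly} one has $G(x)=f_{1,\dots,m}(x)$. For the left-hand side of \eqref{EqDefSpecialI} to make sense, $V_m(x)$ must divide $G(x)$; this holds whenever $F$ is symmetric (the case in all applications), since then $G$ is antisymmetric under every transposition of $x_1,\dots,x_m$ and hence divisible by $\prod_{1\le i<j\le m}(x_i-x_j)=V_m(x)$. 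Writing $G=V_m\,h$ with $h$ a polynomial, the left-hand side of \eqref{Spasder} equals $[h]_{x_1=\dots=x_N=1}=h(1,\dots,1)$, so the proof reduces to showing $\bigl[\partial_x^{\delta}\bigl(V_m(x)\,h(x)\bigr)\bigr]_{x_1=\dots=x_N=1}=h(1,\dots,1)$ for an arbitrary polynomial $h$.

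The key step is to recognize $p\mapsto[\partial_x^{\delta}p]_{x_1=\dots=x_N=1}$ as a coefficient-extraction operator. Since $\partial_x^{\delta}$ involves only $\partial_{x_1},\dots,\partial_{x_{m-1}}$, the variables $x_{m+1},\dots,x_N$ are inert and may be set to $1$ at the outset, so I would regard all polynomials as polynomials in $x_1,\dots,x_m$. Passing to $y_i:=x_i-1$ --- whence $\partial_{x_i}=\partial_{y_i}$ and, by translation invariance, $V_m(x)=V_m(y)=\prod_{1\le i<j\le m}(y_i-y_j)$ --- a direct check gives $\partial_{y_1}^{m-1}\partial_{y_2}^{m-2}\cdots\partial_{y_{m-1}}^{1}\,y^{\beta}\big|_{y=0}=\prod_{i=1}^{m-1}(m-i)!$ when $\beta=\delta:=(m-1,m-2,\dots,1,0)$ and $0$ otherwise, so with the normalizing constant $\prod_{i=1}^{m-1}1/(m-i)!$ built into $\partial_x^{\delta}$ one concludes that $[\partial_x^{\delta}p]_{x_1=\dots=x_m=1}$ is precisely the coefficient of $y^{\delta}$ in $p$ expanded in powers of $y$ (here $[y^{\gamma}]q$ denotes the coefficient of $y^{\gamma}$ in $q$). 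Applying this with $p=V_m\,h$ and writing $h(1+y)=\sum_{\alpha}c_{\alpha}\,y^{\alpha}$, so that $c_0=h(1,\dots,1)$, the quantity sought is $\sum_{\alpha}c_{\alpha}\,[y^{\delta-\alpha}]V_m(y)$; and since $V_m(y)$ is homogeneous of degree $\binom{m}{2}=|\delta|$, every term with $\alpha\neq0$ vanishes, leaving $c_0\,[y^{\delta}]V_m(y)$.

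It would then remain only to compute $[y^{\delta}]V_m(y)=1$: writing $V_m(y)=(-1)^{\binom{m}{2}}\det\bigl(y_i^{\,j-1}\bigr)_{1\le i,j\le m}=(-1)^{\binom{m}{2}}\sum_{\sigma\in S_m}\mathrm{sgn}(\sigma)\prod_{i=1}^{m}y_i^{\sigma(i)-1}$, the monomial $y^{\delta}=\prod_i y_i^{m-i}$ arises only from the longest permutation $\sigma(i)=m+1-i$, whose sign is $(-1)^{\binom{m}{2}}$, so the two signs cancel. (Equivalently, $\partial_x^{\delta}V_m\equiv1$; a one-line Leibniz expansion of $\partial_x^{\delta}(V_m h)$ evaluated at $x=(1,\dots,1)$, in which only the term carrying all derivatives on $V_m$ survives since the lower-order derivatives of $V_m$ vanish there by homogeneity, then gives the conclusion directly.) Hence $[\partial_x^{\delta}(V_m h)]_{x_1=\dots=x_m=1}=c_0=h(1,\dots,1)$, as required. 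I do not expect any genuine obstacle; the two points needing a little care are keeping the inert variables $x_{m+1},\dots,x_N$ separate from $x_1,\dots,x_m$, handled by expanding about the all-ones point which $\partial_x^{\delta}$ does not see, and tracking the sign $(-1)^{\binom{m}{2}}$ that relates $\prod_{1\le i<j\le m}(y_i-y_j)$ to the Vandermonde determinant, which cancels against the sign of the longest permutation.
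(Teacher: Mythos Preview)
Your proof is correct and follows essentially the same route as the paper's: write $G=\partial_{\theta_m}\cdots\partial_{\theta_1}F=V_m\,h$, and then show $[\partial_x^{\delta}(V_m h)]_{x=1}=h(1,\dots,1)$ by exploiting that $V_m(1+y)=V_m(y)$ is homogeneous of degree $\binom{m}{2}$, so all lower-order derivatives of $V_m$ vanish at $(1,\dots,1)$ and only the term with all derivatives landing on $V_m$ survives. The paper's proof is precisely your parenthetical Leibniz remark (using $\partial_x^{\delta}V_m\equiv 1$ and the vanishing of $\partial^{\beta}V_m$ at $(1,\dots,1)$ for $|\beta|<\binom{m}{2}$); your coefficient-extraction reformulation via $y_i=x_i-1$ is just a repackaging of the same computation.
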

\begin{proof}
If $f(x_1,\dots,x_N)$ is a polynomial antisymmetric in the variables
$x_1,\dots,x_m$, then
\begin{equation}
f(x_1,\dots,x_N)=
g(x_1,\dots,x_N) V_m(x)
\end{equation}
for some polynomial $g(x_1,\dots,x_N)$.  This implies that
\begin{equation}
\left[\frac{f(x_1,\dots,x_N)}{V(x)}
\right]_{x_1=\cdots=x_N=1}=\left[ g(x_1,\dots,x_N)
\right]_{x_1=\cdots=x_N=1},
\end{equation}
 from which we have
\begin{align}\label{divV}
 \left[ \frac{f(x_1,\dots,x_N)}{V_m(x)} \right]_{x_1=\cdots=x_N=1}
 & = \left[ g(x_1,\dots,x_N) \partial_x^{\delta} \prod_{1\leq i <j \leq m}(x_i-x_j)
\right]_{x_1=\cdots=x_N=1} \nonumber\\
 & = \left[ \partial_x^{\delta}
\Big[g(x_1,\dots,x_N)  \prod_{1\leq i <j \leq m}(x_i-x_j)\Big]
\right]_{x_1=\cdots=x_N=1} \nonumber\\
& = \left[ \partial_x^{\delta}
f(x_1,\dots,x_N)\right]_{x_1=\cdots=x_N=1}.
\end{align}
If $F(x,\theta)$ is a polynomial in superspace of fermionic degree $m$,
then $\partial_{\theta_m} \cdots \partial_{\theta_1} F(x,\theta)$ is a polynomial
in $x_1,\dots,x_N$ antisymmetric in $x_1,\dots,x_m$.  This proves the lemma.
\end{proof}

\begin{lemma}\label{LemmaSpecialIm}
Let $\La=(\La^a;\La^s)$ be a superpartition of length $\ell\leq N$ and of fermionic degree $m$. Let also
$\gamma=\La^a-\delta_m$ where $\delta_m=(m-1,m-2,\ldots,0)$.
Then
\begin{equation}\Sp_{N,m}\,[m_\La]=\frac{(N-\ell+1)_{\ell-m}}{n_{\La^s}!}\prod_{(i,j)\in\gamma}\frac{m+j-i}{l_{\gamma}(i,j)+a_{\gamma}(i,j)+1} \, ,\end{equation}
where $(a)_k:=a(a+1)\cdots(a+k-1)$, {with $(a)_0=1$}.
\end{lemma}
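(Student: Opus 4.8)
The plan is to compute $\Sp_{N,m}[m_\La]$ directly from the expansion \eqref{eqmono}, reducing it to the all-ones evaluation of a Schur polynomial. First I would apply $\partial_{\theta_m}\cdots\partial_{\theta_1}$ to
\[
m_\La=\frac1{n_\La!}\sum_{\sigma\in S_N}\theta_{\sigma(1)}\cdots\theta_{\sigma(m)}\,x_{\sigma(1)}^{\La_1}\cdots x_{\sigma(N)}^{\La_N}.
\]
Since each fermionic monomial has degree $m$, only the permutations $\sigma$ with $\{\sigma(1),\dots,\sigma(m)\}=\{1,\dots,m\}$ contribute; writing such a $\sigma$ as a pair $(\tau,\rho)$, with $\tau$ a permutation of $\{1,\dots,m\}$ and $\rho$ a permutation of $\{m+1,\dots,N\}$, one has $\partial_{\theta_m}\cdots\partial_{\theta_1}\bigl(\theta_{\tau(1)}\cdots\theta_{\tau(m)}\bigr)=\mathrm{sgn}(\tau)$. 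The $x$-part then factors: the $\tau$-sum produces the alternant $\det\bigl(x_i^{\La_j}\bigr)_{1\le i,j\le m}$, and the $\rho$-sum produces $\bigl(\prod_{i\ge0}n_{\La^s}(i)!\bigr)\,m_{\La^s}(x_{m+1},\dots,x_N)$ (ordinary monomial symmetric polynomial), the prefactor being the order of the stabilizer of the word $(\La_{m+1},\dots,\La_N)$, where $n_{\La^s}(0)=N-\ell$. Hence
\[
\partial_{\theta_m}\cdots\partial_{\theta_1}m_\La=\frac{\prod_{i\ge0}n_{\La^s}(i)!}{n_\La!}\;\det\bigl(x_i^{\La_j}\bigr)_{1\le i,j\le m}\;m_{\La^s}(x_{m+1},\dots,x_N).
\]

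Next, because $\La_1>\dots>\La_m\ge0$, I would write $\La_j=\gamma_j+(m-j)$ with $\gamma=\La^a-\delta_m$ a partition satisfying $\ell(\gamma)\le m$, and invoke Jacobi's bialternant identity to get $\det\bigl(x_i^{\La_j}\bigr)_{1\le i,j\le m}=s_\gamma(x_1,\dots,x_m)\,V_m(x)$. Dividing by $V_m(x)$ and setting $x_1=\dots=x_N=1$ (legitimate, since $s_\gamma$ is a polynomial) gives
\[
\Sp_{N,m}[m_\La]=\frac{\prod_{i\ge0}n_{\La^s}(i)!}{n_\La!}\;s_\gamma(1^m)\;m_{\La^s}(1^{N-m}).
\]
Since $m_{\La^s}(1^{N-m})$ is just the number of distinct rearrangements of $(\La_{m+1},\dots,\La_N)$, namely $(N-m)!/\prod_{i\ge0}n_{\La^s}(i)!$, the multinomial prefactor cancels against it and one is left with $\Sp_{N,m}[m_\La]=\frac{(N-m)!}{n_\La!}\,s_\gamma(1^m)$.

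It remains to evaluate $s_\gamma(1^m)$, for which I would use the hook--content formula: $s_\gamma(1^m)=\prod_{(i,j)\in\gamma}\frac{m+j-i}{a_\gamma(i,j)+l_\gamma(i,j)+1}$, the content of the cell $(i,j)$ being $j-i$ and its hook length being $a_\gamma(i,j)+l_\gamma(i,j)+1$. Combining this with $n_\La!=(N-\ell)!\,n_{\La^s}!$ and $(N-m)!=(N-\ell)!\,(N-\ell+1)_{\ell-m}$ then produces exactly the asserted expression.

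The chain of manipulations is standard; the one step that needs genuine care is the bookkeeping of the various multiplicities — the normalization $n_\La!$ in \eqref{eqmono}, the stabilizer factor $\prod_{i\ge0}n_{\La^s}(i)!$ coming from the repeated parts of $\La^s$ (trailing zeros included), and the tableau count $s_\gamma(1^m)$ — so that all the factorials telescope correctly into $(N-\ell+1)_{\ell-m}/n_{\La^s}!$. The degenerate case $\La_m=0$ deserves a word: then $\gamma$ has fewer than $m$ nonzero rows, but this is harmless, since the bialternant identity is unaffected and the hook--content product runs only over the cells of $\gamma$, omitting the empty row; in particular $s_\gamma(1^m)\neq0$ because $\ell(\gamma)\le m$.
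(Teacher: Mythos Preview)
Your proof is correct and follows essentially the same route as the paper: factor $\partial_{\theta_m}\cdots\partial_{\theta_1}m_\La/V_m(x)$ as a Schur polynomial $s_\gamma(x_1,\dots,x_m)$ times the monomial symmetric function $m_{\La^s}(x_{m+1},\dots,x_N)$, then invoke the hook--content evaluation of $s_\gamma(1^m)$ and the multinomial count for $m_{\La^s}(1^{N-m})$. The paper's own proof is a two-line sketch citing exactly these evaluation formulas; you have simply filled in the bookkeeping it omits.
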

\begin{proof}
It is easy to deduce that
\begin{equation}
\frac{\partial_{\theta_m} \cdots \partial_{\theta_1}
m_{\Lambda}(x;\theta)}{V_m(x)}  = s_{\gamma}(x_1,\dots,x_m)
m_{\La^s} (x_{m+1},\dots, x_{N})
\end{equation}
where $s_{\gamma}(x_1,\dots,x_m)$ is a Schur function.
The result is then immediate from the {evaluations} of
Schur \cite[Ex. 4. p. 45]{Mac} (or \cite[Theo. 5.4 and Prop. 1.2]{Stan}) and monomial functions \cite[Eq. 33]{Stan}.
\end{proof}

\begin{corollary} \label{coro22}
Let $\La_\mathrm{max}=\La_\mathrm{min}'$ be the
largest superpartitions  of degree $(n|m)$ in the dominance ordering
{(i.e., $\La_\mathrm{max}^a=\delta_m+(\ell_{n,m})$
and $\La_\mathrm{max}^s=0$). }
 Then
\begin{equation} \label{eqevalmin}
\Sp_{N,m}\,[m_{\La_\mathrm{min}}]=
\frac{(N-m-\ell_{n,m}+1)_{\ell_{n,m}}}{\ell_{n,m}!}\end{equation}
and \begin{equation}\Sp_{N,m}\,[m_{\La_\mathrm{max}}]=
{\frac{(m)_{\ell_{n,m}}}{\ell_{n,m}!}}
\end{equation}
\end{corollary}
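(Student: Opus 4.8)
The plan is simply to feed the two extremal superpartitions into Lemma~\ref{LemmaSpecialIm}; all that is required is to read off, in each case, the partition $\gamma=\La^a-\delta_m$, the length $\ell(\La)$, and the integer $n_{\La^s}!$, and then substitute.

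First I would treat $\La_{\mathrm{min}}=(\delta_m;1^{\ell_{n,m}})$. Here $\La_{\mathrm{min}}^a=\delta_m$, so $\gamma=\delta_m-\delta_m$ is the empty partition and the product over the cells of $\gamma$ in Lemma~\ref{LemmaSpecialIm} is empty, hence equal to $1$. Since $\La_{\mathrm{min}}^s=1^{\ell_{n,m}}$ has $\ell_{n,m}$ parts equal to $1$, we have $n_{\La_{\mathrm{min}}^s}!=\ell_{n,m}!$. Finally, the diagram of $\La_{\mathrm{min}}^\circledast$ has $m$ rows coming from the fermionic part (the parts $m-1,\dots,1,0$ of $\delta_m$ produce rows of lengths $m,\dots,2,1$) together with the $\ell_{n,m}$ rows of length one coming from the bosonic part, so $\ell(\La_{\mathrm{min}})=m+\ell_{n,m}$ and $\ell-m=\ell_{n,m}$. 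Substituting these data into Lemma~\ref{LemmaSpecialIm} gives $\Sp_{N,m}[m_{\La_{\mathrm{min}}}]=(N-m-\ell_{n,m}+1)_{\ell_{n,m}}/\ell_{n,m}!$, which is \eqref{eqevalmin}.

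Next I would treat $\La_{\mathrm{max}}=\La_{\mathrm{min}}'$, whose $a$- and $s$-parts are $\La_{\mathrm{max}}^a=\delta_m+(\ell_{n,m})$ and $\La_{\mathrm{max}}^s=\emptyset$. Then $\gamma=\La_{\mathrm{max}}^a-\delta_m=(\ell_{n,m})$ is a single row of length $\ell_{n,m}$, $n_{\La_{\mathrm{max}}^s}!=1$, and the diagram of $\La_{\mathrm{max}}^\circledast$ consists only of the $m$ fermionic rows, so $\ell(\La_{\mathrm{max}})=m$ and the prefactor $(N-\ell+1)_{\ell-m}=(N-m+1)_0=1$. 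For the one-row diagram $\gamma=(\ell_{n,m})$ the cells are $(1,j)$ with $1\le j\le\ell_{n,m}$; there $l_\gamma(1,j)=0$, $a_\gamma(1,j)=\ell_{n,m}-j$, and $m+j-i=m+j-1$, so Lemma~\ref{LemmaSpecialIm} yields
\begin{equation*}
\Sp_{N,m}[m_{\La_{\mathrm{max}}}]=\prod_{j=1}^{\ell_{n,m}}\frac{m+j-1}{\ell_{n,m}-j+1}=\frac{(m)_{\ell_{n,m}}}{\ell_{n,m}!},
\end{equation*}
which is the second asserted identity.

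Everything here is a direct substitution, so there is no genuine obstacle; the only point that warrants attention is the bookkeeping of which diagram each arm-, leg-, and length quantity refers to, and in particular the correct reading of $\ell(\La)$ off $\La^\circledast$ in the two extremal cases (equivalently, recognizing that $\La_{\mathrm{max}}$ has no bosonic parts while $\La_{\mathrm{min}}$ has exactly $\ell_{n,m}$ of them, each of length one).
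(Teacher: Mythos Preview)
Your proof is correct and follows exactly the approach intended by the paper: the corollary is stated without proof precisely because it is a direct substitution into Lemma~\ref{LemmaSpecialIm}, and you have carried out that substitution accurately in both cases. In particular, your identifications $\gamma=\emptyset$, $\ell=m+\ell_{n,m}$ for $\La_{\mathrm{min}}$ and $\gamma=(\ell_{n,m})$, $\ell=m$ for $\La_{\mathrm{max}}$ are correct, and the evaluation of the one-row hook product is handled properly.
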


The following technical result will be needed in the proof of the main theorem
of this section.
\begin{lemma} \label{LemmaJn}
 For $r,s$ nonnegative integers such that
$r \leq s$, we have that
\begin{equation}
\left[ {\partial^r_{x_1}  \partial_{\theta_1} J_{\tilde s}(x;\ta)} \right]_{x_1=\cdots=x_N=1} = \frac{s!}{(s-r)!} \alpha^{s} (1/\alpha+1)_r (N/\alpha+r+1)_{s-r}
\end{equation}
\end{lemma}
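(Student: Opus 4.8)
\emph{Overview.} The plan is to first establish a closed form for $J_{\tilde s}$, and then extract the desired value by a direct generating–function computation.

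\emph{Step 1: closed form for $J_{\tilde s}$.} The superpartition $\tilde s=(s;)$ is exactly the conjugate of the minimal superpartition $\La_{\mathrm{min}}=(0;1^s)$ of degree $(s|1)$ (here $\delta_1=(0)$, $\ell_{s,1}=s$), and $P_{\La_{\mathrm{min}}}=m_{\La_{\mathrm{min}}}=\tilde e_s$ because the triangular expansion \eqref{Ptriangular} of the Jack superpolynomial indexed by the minimal superpartition has no lower terms. Applying the duality \eqref{dual} with $\La=\La_{\mathrm{min}}$ — both $\La_{\mathrm{min}}$ and $\tilde s$ have fermionic degree $1$, so the arrows are inert — gives $\hat\omega_\alpha(\tilde e_s)=\|P_{\La_{\mathrm{min}}}\|^2\,P^{(1/\alpha)}_{\tilde s}$, and replacing $\alpha$ by $1/\alpha$ shows that $P_{\tilde s}$, and hence $J_{\tilde s}$, is proportional to $\hat\omega_{1/\alpha}(\tilde e_s)$. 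So $J_{\tilde s}=C\,\hat\omega_{1/\alpha}(\tilde e_s)$, the scalar $C=C(\alpha)$ being fixed by the normalization of Definition \ref{defJackJ}: the coefficient of $m_{\La_{\mathrm{min}}}$ in $J_{\tilde s}$ must be $\ell_{s,1}!=s!$.

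\emph{Step 2: computing $\hat\omega_{1/\alpha}(\tilde e_s)$.} Since $\tilde e_n=m_{(0;1^n)}=\sum_i\theta_i\,e_n(x\setminus x_i)$, one has $\sum_{n\ge0}\tilde e_n t^n=\sum_i\theta_i\prod_{k\ne i}(1+x_kt)=\bigl(\sum_i\theta_i/(1+x_it)\bigr)\prod_k(1+x_kt)$. A standard power–sum computation shows that $\hat\omega_\beta$ sends $\prod_k(1+x_kt)$ to $\prod_k(1-x_kt)^{-\beta}$, and, since $\sum_i\theta_i/(1+x_it)=\sum_{j\ge0}(-t)^j\tilde p_j$ with $\hat\omega_\beta(\tilde p_j)=(-1)^j\beta\tilde p_j$, it sends $\sum_i\theta_i/(1+x_it)$ to $\beta\sum_i\theta_i/(1-x_it)$. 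With $\beta=1/\alpha$,
\[
\hat\omega_{1/\alpha}\Bigl(\sum_{n\ge0}\tilde e_nt^n\Bigr)=\frac1\alpha\Bigl(\sum_i\theta_i/(1-x_it)\Bigr)\prod_k(1-x_kt)^{-1/\alpha}.
\]
Writing $\sum_i\theta_i/(1-x_it)=\sum_i\theta_i\sum_{j\ge0}x_i^jt^j$, only the $j=0$ term contributes to the $m_{\La_{\mathrm{min}}}$–coefficient of the $t^s$–part (for $j\ge1$ the fermion–carrying variable has positive exponent), and that coefficient equals $\alpha^{-1}$ times the coefficient of $x_2x_3\cdots x_{s+1}$ in $[t^s]\prod_k(1-x_kt)^{-1/\alpha}$, i.e.\ $\alpha^{-1}\cdot\alpha^{-s}=\alpha^{-s-1}$. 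Hence $C=s!\,\alpha^{s+1}$ and
\[
J_{\tilde s}(x;\theta)=s!\,\alpha^{s}\sum_i\theta_i\,[t^s]\Bigl((1-x_it)^{-1}\prod_k(1-x_kt)^{-1/\alpha}\Bigr).
\]

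\emph{Step 3: differentiation and specialization.} Restricting to $N$ variables and using that the factors $(1-x_kt)^{-1/\alpha}$ carry no $\theta$, the operator $\partial_{\theta_1}$ keeps only the $i=1$ summand; combining the $k=1$ factor of the product with $(1-x_1t)^{-1}$ gives $\partial_{\theta_1}J_{\tilde s}=s!\,\alpha^s[t^s]\bigl((1-x_1t)^{-1-1/\alpha}\prod_{k=2}^N(1-x_kt)^{-1/\alpha}\bigr)$. Only the first factor depends on $x_1$, and $\partial^r_{x_1}(1-x_1t)^{-1-1/\alpha}=(1/\alpha+1)_r\,t^r(1-x_1t)^{-(1/\alpha+1+r)}$; setting $x_1=\cdots=x_N=1$ collapses the product to $(1-t)^{-(N/\alpha+r+1)}$, whence
\[
\bigl[\partial^r_{x_1}\partial_{\theta_1}J_{\tilde s}\bigr]_{x=1^N}=s!\,\alpha^s(1/\alpha+1)_r\,[t^{s-r}](1-t)^{-(N/\alpha+r+1)}=\frac{s!}{(s-r)!}\,\alpha^s(1/\alpha+1)_r\,(N/\alpha+r+1)_{s-r},
\]
which is the asserted identity.

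\emph{Main obstacle.} The substantive part is Steps 1--2, the determination of the closed form of $J_{\tilde s}$: one must keep careful track of the inversion $\alpha\leftrightarrow1/\alpha$ and of the sign and arrow conventions in \eqref{dual}, justify the action of $\hat\omega$ on the elementary–superfunction generating series, and extract the $m_{\La_{\mathrm{min}}}$–coefficient to pin down $C$. A duality–free alternative is to take the displayed formula for $P_{\tilde s}=J_{\tilde s}/v_{\tilde s}$ as an ansatz and verify directly, via Theorem \ref{TheoEigenJack}, that it is triangular with leading term $m_{\tilde s}$ (immediate, since $\tilde s=\La_{\mathrm{max}}$) and a common eigenfunction of $D$ and $\Delta$ with eigenvalues $\varepsilon_{\tilde s}=\alpha\binom s2$ and $\epsilon_{\tilde s}=\alpha s$; this replaces the duality input by a short but slightly tedious computation of $D$ and $\Delta$ on $\sum_i\theta_i\sum_{j\ge0}x_i^j\,[t^{s-j}]\prod_k(1-x_kt)^{-1/\alpha}$. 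Once the closed form is in hand, Step 3 is entirely routine.
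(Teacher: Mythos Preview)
Your proof is correct. Both your argument and the paper's ultimately reduce to the same generating-function computation in Step~3: one shows that
\[
\partial_{\theta_1}J_{\tilde s}=s!\,\alpha^{s}\,[t^s]\Bigl((1-x_1t)^{-1-1/\alpha}\prod_{k\ge 2}(1-x_kt)^{-1/\alpha}\Bigr),
\]
after which the differentiation in $x_1$ and the specialization $x_i=1$ are straightforward and identical in the two proofs.

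The difference lies in how this closed form is obtained. The paper simply quotes the generating-function identity \cite[Eq.~3.11]{DLMadv},
\[
\sum_{s\ge0}t^s\bigl(g_s(x)+\tau\,\tilde g_s(x;\theta)\bigr)=\prod_{i}(1-tx_i-\tau\theta_i)^{-1/\alpha},
\]
together with the normalization $J_{\tilde s}=s!\,\alpha^{s+1}\tilde g_s$, and extracts the coefficient of $\tau$. You instead derive the closed form from within the paper: you use that $\tilde s=\La_{\min}'$ with $P_{\La_{\min}}=\tilde e_s$, apply the duality \eqref{dual} to get $J_{\tilde s}\propto\hat\omega_{1/\alpha}(\tilde e_s)$, compute $\hat\omega_{1/\alpha}$ on the generating series $\sum_n\tilde e_nt^n$, and fix the constant via Definition~\ref{defJackJ}. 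Your route is slightly longer but more self-contained, relying only on results stated in the present paper (the duality and the triangularity of $P_\La$) rather than on an external formula; the paper's route is shorter because the needed identity is already packaged in \cite{DLMadv}. In substance the two inputs are equivalent, since the kernel identity in \cite{DLMadv} and the duality~\eqref{dual} are two faces of the same Cauchy-type structure.
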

\begin{proof}  From \cite[Eq. 3.11]{DLMadv}  we have
\begin{equation} \label{eq515}
\sum_{s\geq 0} t^s \left[ g_s(x) + \tau \tilde g_s(x;\theta) \right] =
\prod_{i \geq 1} \frac{1}{(1-t x_i -\tau \theta_i)^{1/\alpha}}
\end{equation}
where the polynomials $g_s(x)$ and $\tilde g_s(x;\theta)$ are up to a constant
equal to $J_s(x)$ and $J_{\tilde s}(x;\ta)$ respectively:
\begin{equation}
J_{s}(x) = s! \, \alpha^s g_s(x) \qquad {\rm and} \qquad J_{\tilde s}(x;\theta)
= s! \, \alpha^{s+1} \tilde g_s(x;\theta)
\end{equation}
Taking the coefficient of $\tau$ on each side of \eqref{eq515} and using
\begin{equation}
\frac{\partial}{\partial {\theta_1}}(1-tx_1 -\tau \theta_1)^{-1/\alpha}
 =-\frac{\tau}{\alpha}
(1-tx_1-\tau \theta_1)^{-1/\alpha-1} \, ,
\end{equation}
we obtain
\begin{equation}
\sum_{s\geq 0} \frac{t^s}{s!\alpha^{s+1} }\, \partial_{\theta_1}
J_{\tilde s}(x;\ta)  =
\frac{1}{\alpha} (1-tx_1)^{-1/\alpha-1} (1-tx_2)^{-1/\alpha} \cdots
(1-tx_N)^{-1/\alpha} \, .
\end{equation}
This readily implies
\begin{align}
\left[\sum_{s\geq 0} \frac{t^s}{s! \alpha^{s+1}}
\partial^r_{x_1} \partial_{\theta_1} J_{\tilde s}(x;\ta) \right]_{x_1=\cdots=x_N=1}  & = \frac{t^r}{\alpha}
(1/\alpha+1)_{r} (1-t)^{-N/\alpha-r-1} \\
& =
\frac{1}{\alpha}(1/\alpha+1)_{r}
\sum_{m \geq 0} \frac{(N/\alpha+r+1)_{m}}{m!} t^{m+r}
\end{align}
The lemma then follows by taking the coefficient of $t^s$ on both sides of
the equation.
\end{proof}

In contradistinction to the evaluation of Jack polynomials
without fermions, the presence of fermions requires the introduction of
skew diagrams.  If $\Lambda$ is a superpartition of fermionic degree $m$,
let $\mathcal{S}$ be such that
\begin{equation}\mathcal{S } \La =
\La^{\circledast}/(m,m-1,\ldots,1).\end{equation}
Fig. \ref{FigS} illustrates the action of $\S$ on the diagram of $\La$.
A rationale for the addition of the $m$ circles and the
removal of the staircase $(m,m-1,\ldots,1)$
is that by dividing the polynomial by the Vandermonde determinant, we decrease the degree in $x$ by $m(m-1)/2$.

\begin{figure}[h]\caption{Operator $\S$ }\label{FigS}
\begin{equation*}
\mathcal{S} \,:\quad {\tableau[scY]{&&&&\bl\tcercle{}\\&&&\\&&&\bl\tcercle{}\\&&\\&\bl\tcercle{}}}  \longmapsto \quad {\tableau[scY]{\bl&\bl&\bl & &\\\bl&\bl&&\\\bl&&&\\&&\\&}}
\end{equation*}
\end{figure}

\begin{theorem}\label{TheoSpecialI}Let $\La$ be a superpartition of fermionic
degree $m$, and define
\begin{equation}\label{eqblambda}
b_\La^{(\alpha,N)}:=\prod_{(i,j)\in \mathcal{S}\La} b_\La^{(\alpha,N)}(i,j)  :=\prod_{(i,j)\in \S\La}\left(N-(i-1)+\alpha(j-1)\right).\end{equation}
Then, for all $N\geq \ell=\ell(\Lambda)$,
 \begin{equation}\label{spegen}
 \Sp_{N,m}[J_\La]\,=\,b_\La^{(\alpha,N)}.\end{equation}
\end{theorem}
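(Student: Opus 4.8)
The plan is to prove \eqref{spegen} by induction on $|\La^\circledast|=n+m$, peeling off the leftmost column of $\La$ at each step and invoking the decomposition results of Section~\ref{codec}. First observe that $\Sp_{N,m}$ is nothing but $E_{N,m}$: applying $\partial_{\theta_m}\cdots\partial_{\theta_1}$ to a superpolynomial of fermionic degree $m$ returns precisely its component $f_{1,\dots,m}(x)$, and $V_m(x)$ is the Vandermonde in $x_1,\dots,x_m$, so \eqref{EqDefSpecialI} coincides with \eqref{eqdefENm}; hence one may work with $\Sp_{N,m}$ throughout. The base case $\La=\emptyset$ is trivial ($J_\emptyset=1$, $\mathcal{S}\emptyset=\emptyset$). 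As a first, instructive, nontrivial instance, when $\La=\La_{\mathrm{min}}$ one has $J_{\La_{\mathrm{min}}}=\ell_{n,m}!\,m_{\La_{\mathrm{min}}}$, so Corollary~\ref{coro22} gives $\Sp_{N,m}[J_{\La_{\mathrm{min}}}]=(N-m-\ell_{n,m}+1)_{\ell_{n,m}}$, which is exactly $b_{\La_{\mathrm{min}}}^{(\alpha,N)}$ since $\mathcal{S}\La_{\mathrm{min}}$ is a single column of $\ell_{n,m}$ cells, so all its factors have the form $N-(i-1)$; the single fermionic row $\La=\tilde s$ is handled, for \emph{every} $N$, directly by Lemma~\ref{LemmaJn} at $r=0$, which yields $\alpha^{s}(N/\alpha+1)_{s}=\prod_{k=1}^{s}(N+k\alpha)=b_{\tilde s}^{(\alpha,N)}$.

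\textbf{The two reduction steps.} Every non-empty $\La$ satisfies exactly one of: (i) the first column of $\La$ carries no circle, equivalently $\ell(\La)=\ell(\La^*)=\ell$; or (ii) it carries a circle, equivalently $\La_m=0$. In case (i), Proposition~\ref{PropFactoI} gives $P_\La=x_1\cdots x_\ell\,P_{\C\La}$ in $\ell$ bosonic variables, hence $J_\La=(v_\La/v_{\C\La})\,x_1\cdots x_\ell\,J_{\C\La}$; since $x_1\cdots x_\ell$ evaluates to $1$ under $\Sp_{\ell,m}$, this yields $\Sp_{\ell,m}[J_\La]=(v_\La/v_{\C\La})\,\Sp_{\ell,m}[J_{\C\La}]$, and the inductive hypothesis (applicable at $N=\ell\ge\ell(\C\La)$) turns the right side into $(v_\La/v_{\C\La})\,b_{\C\La}^{(\alpha,\ell)}$. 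In case (ii), Proposition~\ref{PropFactoII} relates $P_\La$ in $\ell$ variables to $P_{\widetilde{\C}\La}$ in $\ell-1$ variables, with $\widetilde{\C}\La$ of fermionic degree $m-1$; pushing $\partial_{\theta_m}\cdots\partial_{\theta_1}$ and the specialization $x_\ell=0$ through this identity — the superspace incarnation of the reduction of the Vandermonde order from $m$ to $m-1$ that underlies Theorem~\ref{mainT2} — expresses $\Sp_{\ell,m}[J_\La]$ as an explicit constant times $\Sp_{\ell-1,m-1}[J_{\widetilde{\C}\La}]=b_{\widetilde{\C}\La}^{(\alpha,\ell-1)}$. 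What remains is to identify these proportionality constants: Lemma~\ref{LemmaJn}, with its Pochhammer factors $(1/\alpha+1)_r$ and $(N/\alpha+r+1)_{s-r}$ for a single fermionic row, supplies the building block carrying the $\alpha$-dependence — i.e.\ the factors $N-(i-1)+\alpha(j-1)$ with $j>1$ — while the first column of $\mathcal{S}\La$ contributes the $\alpha$-free factors $N-(i-1)$.

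\textbf{From $N=\ell$ to all $N$, and the main difficulty.} Because $\C$ and $\widetilde{\C}$ shrink the number of rows whereas \eqref{spegen} asserts the formula for all $N\ge\ell(\La)$, one must also relate the evaluation in $N$ variables to the evaluation in fewer; this is where the skew/Cauchy machinery of Section~\ref{Sskewjack} enters. Writing $P_\La$ with part of its variables set to $1$ and then performing the $\theta$-differentiation leaves only the skew Jack polynomials $P_{\La/\Om}$ with $\Om$ of fermionic degree $m$, and those are \emph{ordinary} skew Jack polynomials, whose specialization at $(1^{q})$ is classical \cite{Stan}; this reduces the $N$-dependence to the already-understood pieces, and since each $\Sp_{N,m}[m_\Om]$ is a polynomial in $N$ (Lemma~\ref{LemmaSpecialIm}), so is $\Sp_{N,m}[J_\La]$ — of degree $|\mathcal{S}\La|$ — which together with the reduction pins both sides down. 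The genuine obstacle is the combinatorial bookkeeping: one must verify that the telescoping product of the proportionality constants produced by iterating reductions (i) and (ii), corrected for the changes in the number of variables, reassembles \emph{exactly} into $b_\La^{(\alpha,N)}=\prod_{(i,j)\in\mathcal{S}\La}\!\bigl(N-(i-1)+\alpha(j-1)\bigr)$ — in particular that each $\alpha$-free and each $\alpha$-carrying factor gets attached to the correct cell of the skew diagram $\mathcal{S}\La$. Everything else is either formal or a citation (Corollary~\ref{coro22}, Lemma~\ref{LemmaJn}, and the classical evaluation of ordinary skew Jack polynomials).
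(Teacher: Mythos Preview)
Your approach has a genuine circularity problem. The column operations of Section~\ref{codec} relate $P_\La$ to $P_{\C\La}$ (or $P_{\widetilde{\C}\La}$), but once you pass to the $J$-normalization you pick up the unknown ratio $v_\La/v_{\C\La}$ (resp.\ $v_\La/v_{\widetilde{\C}\La}$). You write these ratios explicitly in your case~(i) and~(ii), and then say ``what remains is to identify these proportionality constants.'' But in the paper's logic these constants are computed \emph{from} Theorem~\ref{TheoSpecialI}: see Propositions~\ref{proprla} and~\ref{proprlat}, where $r_\La(\alpha)=v_\La/v_{\C\La}$ is evaluated as the quotient $\Sp_{\ell,m}[J_\La]/\Sp_{\ell,m}[J_{\C\La}]=b_\La^{(\alpha,\ell)}/b_{\C\La}^{(\alpha,\ell)}$. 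You offer no independent route to these ratios; Lemma~\ref{LemmaJn} only handles the single row $\tilde s$ and says nothing about $v_\La/v_{\C\La}$ for general~$\La$. What you dismiss as ``combinatorial bookkeeping'' is therefore not bookkeeping at all---it is the entire content of the theorem.

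There is a second, related gap: the column identities hold only at the special value $N=\ell(\La)$, so even granting the ratios you would obtain a single value of the polynomial $N\mapsto\Sp_{N,m}[J_\La]$, not the full polynomial. Your suggestion to fix this via skew Jack superpolynomials is too vague; the coefficients $g^\La_{\Om\Gamma}$ for $\Gamma$ of fermionic degree~$0$ are \emph{not} the classical skew-Jack coefficients, so the ``classical specialization'' you invoke does not apply. The paper avoids both issues by a completely different strategy: it shows that $\Sp_{N,m}[J_\La]$ is a monic polynomial in~$N$ of degree~$|\mathcal S\La|$, then proves it is \emph{divisible} by $b_\La^{(\alpha,N)}$ as a polynomial in~$N$. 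The divisibility is obtained in two pieces---one from the skew expansion \eqref{eqspecI} and induction on~$m$, the other from the row-wise product decomposition $J_{\tilde s}J_\Om$ (or $J_s J_\Om$) together with Lemma~\ref{LemmaJn} and induction on the dominance order---and equality follows since both sides are monic of the same degree. The column operations are then used downstream, in Section~\ref{Snorm}, precisely to extract $v_\La$ from the already-established evaluation.
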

\begin{proof}  We proceed by {induction on $m$ and on
the dominance ordering for fixed $m$.}
The base case $m=0$ being known \cite{Stan}, we can assume that $m>0$.
We have from Lemma~\ref{LemmaSpecialIm},
\eqref{eqevalmin} and the normalization \eqref{eqnormalJ} of
the Jack polynomials in superspace $J_{\La}$ that $\Sp_{N,m}[J_\La]$
is a monic polynomial in $N$ of degree $|\La|-m(m-1)/2$.

From Proposition~\ref{PropSkew}, we have that
\begin{equation}
J_{\La}(x;\theta) = \sum_{\Omega} k_{\Om} (\alpha)
 J_{\La/\Om}(x_1;\theta_1)
J_{\Omega}(x_-;\theta_-) ,
\end{equation}
where $k_{\Om}(\alpha)$ are coefficients that do not depend on $N$,  and
where $(x_-;\ta_-)$ stands for the variables
$(x_2,\cdots,x_N;\ta_2,\cdots \ta_N)$.
Since $m>0$, we have from \eqref{Spasder} that
\begin{equation} \label{eqspecI}
\Sp_{N,m} \,[J_\La] =  \sum_{\Om} (-1)^{m-1}  k_{\Om} (\alpha)
\left[\frac{\partial_{x_1}^{m-1}\partial_{\theta_1}
J_{\La/\Om}(x_1;\theta_1)}{(m-1)!}  \right]_{x_1=1}
\, \Sp_{N-1,m-1}\,[ J_{\Omega} ] .
\end{equation}
It is easy to see that the monomial $m_{\Ga}(x_1,\theta_1)=0$ if $\ell(\Ga)>1$.
From the triangularity \eqref{Ptriangular}, we thus also have that
$J_{\Ga}(x_1,\theta_1)=0$ if $\ell(\Ga)>1$.  Therefore $J_{\La/\Om}$
can only be equal (up to a constant)
to $J_r$ or $J_{\tilde r}$, where $r=|\La|-|\Om|$. Hence,
for $\partial_{\theta_1} J_{\La/\Om}(x_1;\theta_1)$ to be non-zero,
$J_{\La/\Om}$ must be equal (up to a constant) to $J_{\tilde r}$.
Proposition~\ref{PropgII} and \eqref{defskew}
then imply  that $\partial_{\theta_1} J_{\La/\Om}(x_1;\theta_1) \neq 0$ only if
$\La/\Om$ is a horizontal
$\tilde r$-strip.  Therefore $\mathcal{S}\Om$ contains all the cells
of $\S\La$ that do not lie at the bottom of a column along with cells
$(1,m),(2,m-1),\dots,(m,1)$ (since $\Omega$ has
one less fermionic row than $\La$).  By induction on the number of
fermions $m$ we
thus have that  $\Sp_{N-1,m-1} [J_{\Omega}]$ is a polynomial in $N$ divisible by
\begin{align} \nonumber
\prod_{i=1}^m  \left(N-1-(i-1)+\alpha(m-i)\right)
& \prod_{
\begin{subarray}{c}
(i,j) \in \S\La \\
i \neq ({\La^{\circledast}}')_j
\end{subarray}}  \left(N-1-(i-1)+\alpha(j-1)\right) \\ \label{eqRHS}
& = \prod_{
\begin{subarray}{c}
(i,j) \in \S\La\\
(i,j)\not \in \{ (1,m+1),\dots,(1,\La^{\circledast}_1) \}
\end{subarray}}  \left(N-(i-1)+\alpha(j-1)\right)
\end{align}
and therefore $\Sp_{N,m} [J_\La]$ is also divisible by \eqref{eqRHS}.
We thus have left to show that $\Sp_{N,m} [J_\La]$ is divisible
by
\begin{equation} \label{Termadiv}
\prod_{(i,j) \in \{ (1,m+1),\dots,(1,\La^{\circledast}_1) \} }
 \left(N-(i-1)+\alpha(j-1)\right)=
 (N+m\alpha) \cdots (N+(\La^{\circledast}_1-1)\alpha)
\end{equation}

Suppose that the
first row of $\La^{\circledast}$ is fermionic ($\La^*_1= \La^{\circledast}_1-1=s$)
and let $\Om$ be such that $\Om^*=(\La^*_2,\La^*_3,\dots)$ and
$\Om^{\circledast}=(\La^{\circledast}_2,\La^{\circledast}_3,\dots)$.  Then
\begin{equation} \label{eqdecomp1}
J_{\tilde s} J_{\Om} = g_{\Om \tilde s}^{\La} \frac{v_{\Om} v_{\tilde s}}{\|P_\La\|^2 v_{\La}} J_{\La} +
\sum_{\Gamma > { \La}}   g_{\Om \tilde s}^{\Ga} \frac{v_{\Om} v_{\tilde s}}
{\|P_\Ga\|^2 v_{\Ga}} J_{\Ga}
\end{equation}
where $g_{\Om \tilde s}^{\La} \neq 0$ by Proposition~\ref{Propg}
{ since $\Om^* \cup s = \La^*$ and  $\Om^{\circledast} \cup
(s+1) = \La^{\circledast}$.}

We first prove that
 $\Sp_{N,m} [J_{\tilde s} J_{\Om}]$ is divisible
by \eqref{Termadiv}.  The proof is not entirely straightforward and
will rely on Lemma~\ref{LemmaJn}.

Using \eqref{Spasder}, we can write
\begin{align} \label{eqevaluS}
 \Sp_{N,m}\, [J_{\tilde s} J_{\Om} ]&=
\left[\partial^{\delta}_x \partial_{\theta_m} \cdots \partial_{\theta_1}
(J_{\tilde s} J_{\Om}) \right]_{x_1=\cdots=x_N=1}\nonumber\\
&= \left[ \partial_x^{\delta}
\sum_{i=1}^N (-1)^{m-i}  (\partial_{\theta_i} J_{\tilde s})
 \partial_{\theta_m} \cdots  \widehat{\partial_{\theta_i}} \cdots
\partial_{\theta_1} J_{\Om}  \right]_{x_1=\cdots=x_N=1},
\end{align}
where $ \widehat{\partial_{\theta_i}}$ means that $\partial_{\theta_i}$ is omitted. Consider the term $i=1$.
The expression
$\partial_{\theta_m} \cdots \partial_{\theta_2} J_{\Om}$
is antisymmetric in $x_2,\dots,x_m$ and thus divisible by
$\prod_{2\leq i <j \leq m}(x_i-x_j)$. This implies that
\begin{equation}
\partial_{\theta_m} \cdots \partial_{\theta_2} J_{\Om}=H(x_1,\dots,x_N) \prod_{2\leq i <j \leq m}(x_i-x_j)
\end{equation}
for a certain polynomial $H(x_1,\dots,x_N)$, from which we get that
\begin{align}
& \left[ \partial_x^{\delta}\bigl(
\partial_{\theta_1}(J_{\tilde s})
\partial_{\theta_m} \cdots \partial_{\theta_2} J_{\Om} \bigr)
\right]_{x_1=\cdots=x_N=1} \nonumber \\
& = \left[ \partial_x^{\delta} \Bigl(
\partial_{\theta_1}(J_{\tilde s})
H(x_1,\dots,x_N) \prod_{2\leq i <j \leq m}(x_i-x_j) \Bigr)
\right]_{x_1=\cdots=x_N=1} \nonumber \\ \label{lastpiece}
& = \left[ \frac{1}{(m-1)!}\partial^{m-1}_{x_1} \Bigl(
\partial_{\theta_1}(J_{\tilde s})
H(x_1,\dots,x_N) \Bigr)
\, \partial_{x_-}^{\delta_{m-1}} \prod_{2\leq i <j \leq m}(x_i-x_j)
\right]_{x_1=\cdots=x_N=1}
\end{align}
where
\begin{equation}
\partial_{x_-}^{\delta_{m-1}} :=  \left[ \prod_{i=1}^{m-2} \frac{1}{(m-i-1)!} \right]
\partial_{x_2}^{m-2} \partial_{x_3}^{m-3} \cdots \partial_{x_{m-1}}.
\end{equation}
By Lemma~\ref{LemmaJn}, we have that $\partial_{x_1}^r \partial_{\theta_1}
J_{\tilde s}$ is divisible by \eqref{Termadiv} for any
$r \leq m-1$.  Hence from \eqref{lastpiece}
\begin{equation}
\left[ \partial_x^{\delta}\bigl(
\partial_{\theta_1}(J_{(s;)})\partial_{\theta_m} \cdots \partial_{\theta_2} J_{\Om}
\bigr)
\right]_{x_1=\cdots=x_N=1}
\end{equation}
is also  divisible by \eqref{Termadiv}.
By symmetry of $J_{\tilde s} J_{\Om}$ under the exchange
$x_1 \leftrightarrow x_i$, $\theta_1 \leftrightarrow \theta_i$,
it follows that
every term in the right-hand side of \eqref{eqevaluS}
is divisible by \eqref{Termadiv}, and
consequently so is $\Sp_{N,m} [J_{\tilde s} J_{\Om}]$.

Let $\La=\La_{\rm max}=\La_{\rm min}'$
be the largest
superpartition of $(n|m)$ in dominance order.
We now
show that
$\Sp_{N,m}[J_{\La_{\rm max}}]$
is divisible by  \eqref{Termadiv}, the starting point in our induction
process for fixed $m > 0$.
Observe that the first row of the diagram of
$\La_{\rm max}$ is fermionic since the first column of
the diagram of $\La_{\rm min}$ is fermionic.
If $m=1$,
then ${\La_{\rm max}}=\tilde n$ and
\begin{equation}
\Sp_{N,1}[J_{\tilde n}]= \bigl[ {\partial_{\theta_1}}
J_{\tilde n} \bigr]_{x_1=\cdots=x_N=1} = (N+\alpha) \cdots (N+n\alpha)
\end{equation}
by Lemma~\ref{LemmaJn}.  Therefore $\Sp_{N,1}[J_{\La_{\rm max}}]$
is divisible by  \eqref{Termadiv} in that case.
If $m>1$,  using \eqref{eqdecomp1} with $\La={\La_{\rm max}}$
gives that $J_{\La_{\rm max}}$ is equal to
$J_{\tilde s} J_\Om$ up to a
non-zero constant.  Since we have established that
$\Sp_{N,m}[J_{\tilde s} J_\Om]$
is divisible by \eqref{Termadiv}, our claim follows.

Now we consider again the general situation in \eqref{eqdecomp1}.
The polynomials $J_\Ga$ that appear in the sum in the right-hand-side
are such that $\Ga>\La$.  The first row
of $\Ga^{\circledast}$ is then  not smaller than the first
row of $\La^{\circledast}$.
By induction on the dominance ordering
we thus have that $\Sp_{N,m} [J_{\Ga}]$
is divisible by \eqref{Termadiv}.   Since we have established that
$\Sp_{N,m}[J_{\tilde s} J_\Om]$
is divisible by \eqref{Termadiv}, so is $\Sp_{N,m} [J_{\La}]$ given
that its coefficient in \eqref{eqdecomp1} is non-zero.

Finally, suppose that the
first row of $\La^{\circledast}$ is bosonic ($\La^*_1= \La^{\circledast}_1=s$)
and let $\Om$ be such that $\Om^*=(\La^*_2,\La^*_3,\dots)$ and
$\Om^{\circledast}=(\La^{\circledast}_2,\La^{\circledast}_3,\dots)$.  Then
\begin{equation} \label{eqdecomp2}
J_{s} J_{\Om} = g_{\Om s}^{\La} \frac{v_{\Om} v_{s}}{\|P_\La\|^2 v_{\La}} J_{\La} +
\sum_{\Gamma > \Om}   g_{\Om s}^{\Ga} \frac{v_{\Om} v_{s}}{\|P_\Ga\|^2 v_{\Ga}} J_{\Ga}
\end{equation}
where $g_{\Om s}^{\La} \neq 0$ by Proposition~\ref{Propg}
{ since $\Om^* \cup (s) = \La^*$ and  $\Om^{\circledast} \cup (s) =
\La^{\circledast}$.} Again we have by induction
 on the dominance ordering that $\Sp_{N,m} [J_{\Ga}]$
is divisible by \eqref{Termadiv} for every $J_{\Ga}$ appearing in the sum.
Since $\Sp_{N,m} [J_{s} J_{\Om}]= \Sp_{N,m} [J_{s}]
\, \Sp_{N,0} [J_{\Omega}]$ and   $\Sp_{N,0} [J_{s}]$ is divisible by
\eqref{Termadiv} (see \eqref{eqblambda} in the case $\Lambda=s$),
we have that
$\Sp_{N,m} [J_{\La}]$
is also divisible by \eqref{Termadiv}.

The polynomials   \eqref{eqRHS}
and \eqref{Termadiv} are relatively prime (since their zeroes are distinct for generic values of $\alpha$)
and we have shown that $\Sp_{N,m} [J_{\La}]$ is divisible by both.  Therefore
$
\Sp_{N,m} [J_{\La}]$ is divisible by their product which is equal to \eqref{eqblambda}, a monic polynomial
of degree $|\La|-m(m-1)/2$.  Since we have seen that
$\Sp_{N,m} [J_{\La}]$
is monic and of degree $|\La|-m(m-1)/2$, the theorem follows.
\end{proof}

Let us end this subsection with an example illustrating
formula (\ref{spegen}).  Consider for instance $\Lambda=(3,1;2)$, for which
\begin{equation}
\mathcal{S}(3,1;2)={\tableau[scY]{\bl&\bl&&\\\bl&\\&}}
\end{equation}
By filling the cells $(i,j)$ of $\S\La$ with the factors $N-(i-1)+\alpha(j-1)$, one gets
\begin{equation} {\small{\tableau[mcY]{
\bl {\mbox{\tiny}} &\bl{\mbox{\tiny }} &{\mbox{\tiny $\bar{0}\!+\!2\alpha$}}&{\mbox{\tiny $\bar{0}\!+\!3\alpha$}} \\
\bl &{\mbox{\tiny $\bar{1}\!+ \!\alpha$}}
\\ {\mbox{\tiny$\bar{2}$}}   & {\mbox{\tiny$\bar{2}\!+\!\alpha$}}
}} } \end{equation}
where $\bar{k}=N-k$ with the understanding that $N\geq 3$.  Thus
\begin{equation}\Sp_{N,2}\,[J_{(3,1;2)}]=b_{(3,1;2)}^{(\alpha,N)}={ (N+2\alpha)(N+3\alpha)(N-1+\alpha)(N-2)(N-2+\alpha)}.
\end{equation}

\subsection{Second evaluation formula}\label{spe2}
Remarkably, if the Jack superpolynomial $J_\La(x,\theta)$ has
non-zero fermionic degree, the evaluation $E_{N-1,m-1}$ of
$[\partial_{\theta_N} J_{\La}]_{x_N=0}$ is very similar to that of $J_{\La}$
even though its expansion
in terms of Jack superpolynomials can involve many terms.  We will
refer to the evaluation of $[\partial_{\theta_N} J_{\La}]_{x_N=0}$ has our
second evaluation formula.  To simplify the notation, we will define
\begin{equation}\label{StS}\Spt_{N,m}[F(x,\theta)]
:=\Sp_{N-1,m-1}\left[(-1)^{m-1}\partial_{\theta_N} \,F(x;\theta)\right]_{x_N=0} \, .
\end{equation}
Before getting to the derivation of the explicit form of the second
evaluation formula on Jack polynomials in superspace,
we must introduce another operation on partition.  Let $\La$ be a superpartition
of fermionic degree $m$.  Then $\tilde{\mathcal{S}}$ on $\La$ is defined as
\begin{equation}\tilde{\mathcal{S}} \La =\La^*/(m-1,m-2,\ldots,0).\end{equation}
See Fig.\ \ref{FigStilde} for a diagrammatic illustration of this definition.
\begin{figure}[h]\caption{Operator $\widetilde{\mathcal{S}}$ }\label{FigStilde}
\begin{equation*}
\widetilde{\mathcal{S}} \,:\quad {\tableau[scY]{&&&&\bl\tcercle{}\\&&&\\&&&\bl\tcercle{}\\&&\\&\bl\tcercle{}}}  \longmapsto \quad {\tableau[scY]{\bl&\bl & & &\bl\\\bl& &&\\ &&&\bl\\&&\\&\bl}}
\end{equation*}
\end{figure}

\begin{lemma}\label{lemma19}
Let $\La=(\La_1,\ldots,\La_m;\La_{m+1},\ldots,\La_\ell)$ be a superpartition.
Then
\begin{equation}\Spt_{N,m}\,[m_\La]=
\left\{
\begin{array}{ll}
\Sp_{N-1,m-1} [m_{\La_-}] & {\rm if~} \Lambda_m=0 \\
0 & {\rm otherwise}
\end{array}
\right.
\end{equation}
where $\La_-=(\La_1,\ldots,\La_{m-1};\La_{m+1},\ldots)$.
\end{lemma}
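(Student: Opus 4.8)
The plan is to establish the identity directly at the level of monomials and then apply $\Sp_{N-1,m-1}$ to both sides. Starting from the definition \eqref{eqmono},
\[
m_\La(x;\theta)=\frac{1}{n_\La!}\sum_{\sigma\in S_N}\theta_{\sigma(1)}\cdots\theta_{\sigma(m)}\,x_{\sigma(1)}^{\La_1}\cdots x_{\sigma(N)}^{\La_N},
\]
I would first determine which terms survive the two successive operations $\partial_{\theta_N}$ and $x_N=0$ that enter the definition \eqref{StS} of $\Spt_{N,m}$. Applying $\partial_{\theta_N}$ annihilates every term with $N\notin\{\sigma(1),\dots,\sigma(m)\}$; for a surviving term, writing $\sigma(k)=N$ with $k\le m$, differentiation leaves a factor $x_N^{\La_k}$, which the substitution $x_N=0$ then kills unless $\La_k=0$. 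Since $\La_1>\dots>\La_m\ge 0$, the exponent $\La_k$ with $k\le m$ can vanish only when $k=m$ and $\La_m=0$. (Note that the possible extra zero parts $\La_{\ell+1}=\dots=\La_N=0$ sit at positions $>m$, so they never carry a $\theta$ and cause no trouble.) This immediately yields the ``otherwise'' case: if $\La_m>0$, no term survives, so $(-1)^{m-1}[\partial_{\theta_N}m_\La]_{x_N=0}=0$ and hence $\Spt_{N,m}[m_\La]=0$.

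In the remaining case $\La_m=0$, only permutations $\sigma$ with $\sigma(m)=N$ contribute. The next step is the Grassmann sign bookkeeping: $\partial_{\theta_N}(\theta_{\sigma(1)}\cdots\theta_{\sigma(m-1)}\theta_N)=(-1)^{m-1}\theta_{\sigma(1)}\cdots\theta_{\sigma(m-1)}$, so the prefactor $(-1)^{m-1}$ in \eqref{StS} exactly cancels this sign, and the factor $x_N^{\La_m}=x_N^{0}=1$ disappears cleanly upon setting $x_N=0$. Restricting such a $\sigma$ to $\{1,\dots,m-1,m+1,\dots,N\}$, whose image is $\{1,\dots,N-1\}$, sets up a bijection with $S_{N-1}$, and re-indexing the sum gives
\[
(-1)^{m-1}\bigl[\partial_{\theta_N}m_\La(x;\theta)\bigr]_{x_N=0}=\frac{n_{\La_-}!}{n_\La!}\,m_{\La_-}(x_1,\dots,x_{N-1};\theta_1,\dots,\theta_{N-1}),
\]
where $\La_-=(\La_1,\dots,\La_{m-1};\La_{m+1},\dots)$ has parts $(\La_-)_i=\La_i$ for $i<m$ and $(\La_-)_i=\La_{i+1}$ for $i\ge m$. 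Since $(\La_-)^s=\La^s$, we have $n_{\La_-}!=n_\La!$, so the right-hand side is exactly $m_{\La_-}$; applying $\Sp_{N-1,m-1}$ to both sides finishes this case.

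I do not expect any serious obstacle here: the only delicate points are the bookkeeping of the Grassmann sign and the verification $n_{\La_-}!=n_\La!$, both of which are routine. An alternative would be to deduce the statement from \eqref{Eqm} in the proof of Proposition~\ref{PropFactoII}, but that identity is stated with the number of variables equal to $\ell(\La^{\circledast})$, so one would in any case need the mild extension to $\ell(\La)<N$ handled above; it seems cleaner to carry out the short direct computation.
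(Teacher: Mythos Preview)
Your proof is correct and follows essentially the same approach as the paper: both reduce the lemma to the monomial identity $\left[\partial_{\theta_N}m_\La(x;\theta)\right]_{x_N=0}=(-1)^{m-1}m_{\La_-}(x_-;\theta_-)$ when $\La_m=0$ (and $0$ otherwise), after which applying $\Sp_{N-1,m-1}$ finishes. The paper simply states this identity without details, while you carry out the verification explicitly, including the sign bookkeeping and the equality $n_{\La_-}!=n_\La!$.
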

\begin{proof} The lemma follows immediately from
\begin{equation}\left[\partial_{\theta_N} \,m_\La(x;\theta)\right]_{x_N=0}=
\left\{
\begin{array}{ll}
(-1)^{m-1}m_{\La_-}(x_-;\theta_-) & {\rm if~} \Lambda_m=0 \\
0 & {\rm otherwise}
\end{array}
\right.
\end{equation}
where $(x_-;\theta_-)=(x_1,\ldots,x_{N-1};\theta_1,\ldots,\theta_{N-1})$.
\end{proof}

\begin{theorem}\label{TheoSpecialII}Let $\La$ be a superpartition
of fermionic degree $m>0$.
 Let also
\begin{equation}\tilde{b}_\La^{(\alpha,N)}:= \prod_{(i,j)\in \tilde{\mathcal{S}}\La}\tilde{b}_\La^{(\alpha,N)} (i,j):= \prod_{(i,j)\in \tilde{\mathcal{S}}\La}\left(N-1-(i-1)+\alpha(j-1)\right).\end{equation}
Then, for all $N\geq \ell=\ell(\La)$,
 \begin{equation}\Spt_{N,m}\,[J_\La]=\tilde{b}_N^{(\alpha,N)}.\end{equation}
\end{theorem}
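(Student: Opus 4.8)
The plan is to prove the identity by exhibiting $\Spt_{N,m}[J_\La]$ and $\tilde b_\La^{(\alpha,N)}$ as two monic polynomials in $N$ of the same degree, the latter dividing the former; no induction on $\La$ is needed, Theorem~\ref{TheoSpecialI} being used as a black box. First I would settle monicity and degree. Combining Lemma~\ref{lemma19} with the monomial expansion $J_\La=\sum_{\Om\le\La}v_{\La\Om}m_\Om$ gives $\Spt_{N,m}[J_\La]=\sum_{\Om\le\La,\;\Om_m=0}v_{\La\Om}(\alpha)\,\Sp_{N-1,m-1}[m_{\Om_-}]$, which by Lemma~\ref{LemmaSpecialIm} is a polynomial in $N$. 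Its term of highest degree comes from the longest $\Om$ that occurs, which is the global minimum $\Lambda_{\mathrm{min}}=(\delta_m;1^{\ell_{n,m}})$ (a box count shows it is the unique longest superpartition of degree $(n|m)$, and its smallest fermionic part is indeed $0$). For this term $\gamma=(\Lambda_{\mathrm{min}})^a_--\delta_{m-1}=(1^{m-1})$, the product in Lemma~\ref{LemmaSpecialIm} equals $1$, and since $v_{\La\,\Lambda_{\mathrm{min}}}=\ell_{n,m}!$ by the normalization \eqref{eqnormalJ}, the leading term is exactly $(N-m-\ell_{n,m}+1)_{\ell_{n,m}}$. Hence $\Spt_{N,m}[J_\La]$ is monic in $N$ of degree $\ell_{n,m}=n-\binom m2=|\tilde{\mathcal S}\La|$, the same degree as the monic polynomial $\tilde b_\La^{(\alpha,N)}$.

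The core of the proof is divisibility by $\tilde b_\La^{(\alpha,N)}$. I would split off the last pair of variables using Proposition~\ref{PropSkew}: $P_\La(x;\theta)=\sum_\Om\|P_\Om\|^{-2}\,P_\Om(x_1,\dots,x_{N-1};\theta_1,\dots,\theta_{N-1})\,P_{\La/\Om}(x_N;\theta_N)$. In one pair of variables $P_{\La/\Om}(x_N;\theta_N)$ reduces to its one-row pieces, so $[\partial_{\theta_N}P_{\La/\Om}(x_N;\theta_N)]_{x_N=0}=\|P_{\tilde 0}\|^{-2}g^{\La}_{\Om,\tilde 0}$ (the only surviving piece being proportional to $P_{\tilde 0}(x_N;\theta_N)=\theta_N$); by Proposition~\ref{PropgII} (the horizontal $\tilde n$-strip condition at $\tilde n=\tilde 0$) this vanishes unless $\La/\Om$ is a horizontal $\tilde 0$-strip, i.e.\ $\Om^*=\La^*$ and $\La^\circledast/\Om^\circledast$ is a single box deleted from a circled row. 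Therefore $[\partial_{\theta_N}J_\La]_{x_N=0}=\sum_\Om c_\Om(\alpha)\,J_\Om(x_1,\dots,x_{N-1};\theta_1,\dots,\theta_{N-1})$ with $N$-independent coefficients $c_\Om$, the sum running over these \emph{circle removals} $\Om$ of $\La$ (each of fermionic degree $m-1$). Applying $\Sp_{N-1,m-1}$ and Theorem~\ref{TheoSpecialI} then yields
\[
\Spt_{N,m}[J_\La]=\sum_\Om \tilde{c}_\Om(\alpha)\,\Sp_{N-1,m-1}[J_\Om]=\sum_\Om \tilde{c}_\Om(\alpha)\,b_\Om^{(\alpha,N-1)}
\]
for suitable $N$-independent $\tilde{c}_\Om$. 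Now the partition deleted in $\tilde{\mathcal S}\La$ and the one deleted in $\mathcal S\Om$ (at fermionic degree $m-1$) are one and the same staircase $(m-1,m-2,\dots,1)$, and each circle removal satisfies $\La^*=\Om^*\subseteq\Om^\circledast$; hence $\tilde{\mathcal S}\La=\La^*/(m-1,\dots,1)\subseteq\Om^\circledast/(m-1,\dots,1)=\mathcal S\Om$. Thus $\tilde b_\La^{(\alpha,N)}$ is a sub-product of $b_\Om^{(\alpha,N-1)}$ for every $\Om$ in the sum, hence a common factor of all summands, hence a divisor of $\Spt_{N,m}[J_\La]$.

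Since $\tilde b_\La^{(\alpha,N)}$ divides $\Spt_{N,m}[J_\La]$ and both are monic in $N$ of degree $n-\binom m2$, they are equal, which is the assertion. The hard part is the step identifying which $\Om$ can appear in $[\partial_{\theta_N}J_\La]_{x_N=0}$: everything hinges on Proposition~\ref{PropgII} forcing $\Om^*=\La^*$, since otherwise one could not guarantee $\mathcal S\Om\supseteq\tilde{\mathcal S}\La$ and the divisibility argument would collapse. The remaining points are bookkeeping: the signs produced by reordering anticommuting variables in the skew/Cauchy identities of Proposition~\ref{PropSkew} (they cancel in the end against the $(-1)^{m-1}$ in the definition of $\Spt$), and the fact that all the displayed equalities hold between polynomials in $N$ for $N\ge\ell(\La)$ — a circle removal with $\ell(\Om)=N$ contributes nothing, since then $J_\Om$ in $N-1$ variables vanishes while $b_\Om^{(\alpha,N-1)}$ already has $N=\ell(\Om)$ as a zero.
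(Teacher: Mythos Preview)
Your proof is correct and follows essentially the same route as the paper's: establish that $\Spt_{N,m}[J_\La]$ is monic in $N$ of degree $\ell_{n,m}$ via the $m_{\La_{\min}}$ contribution, then use the skew decomposition of Proposition~\ref{PropSkew} together with Proposition~\ref{PropgII} to restrict the contributing $\Om$ to circle removals, observe $\tilde{\mathcal S}\La\subseteq\mathcal S\Om$, and conclude by Theorem~\ref{TheoSpecialI} that $\tilde b_\La^{(\alpha,N)}$ divides each summand. Your write-up is in fact slightly more explicit than the paper's on the leading-term computation and on the edge case $\ell(\Om)=N$.
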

\begin{proof}
As in the proof of Theorem~\ref{TheoSpecialI}, we first prove that
$\Spt_{N,m} [J_{\La}]$ is a monic polynomial in $N$ of degree
$\ell_{n,m}=|\La|-m(m-1)/2$.
From Lemmas~ \ref{LemmaSpecialIm} and \ref{lemma19}, we have
that the degree in $N$ of $\Spt_{N,m} [J_{\La}]$ is given by
the degree in $N$ of $\Spt_{N,m}[m_{\La_{\rm min}}]$.  Using
\begin{equation}
\Spt_{N,m}[m_{\La_{\rm min}}] = \Sp_{N-1,m-1} [m_{(\La_{\rm min})-}] =
\frac{(N-\ell_{n,m}+m)_{\ell_{n,m}}}{\ell_{n,m}!}
\end{equation}
the claim then follows from
the normalization \eqref{eqnormalJ} of $J_{\La}$ .
From Proposition~\ref{PropSkew}, we have that
\begin{equation} \label{eqimportante}
\Spt_{N,m} \,[J_\La(x,\theta)]=
\sum_{\Omega} k_{\Om} (\alpha)
\,\Sp_{N-1,m-1} \, [ J_{\Om}(x_-,\theta_-)]
\left[\partial_{\theta_N}
J_{\La/\Om}(x_N,\theta_N)\right]_{x_N=0}
\end{equation}
where $k_{\Om}(\alpha)$ are coefficients that do not depend on $N$.
Since
\begin{equation}
\left[\partial_{\theta_N} J_{\Ga}(x_N,\theta_N)\right]_{x_N=0} \neq 0
\quad \iff \quad \Ga=\tilde 0
\end{equation}
we have that $\left[\partial_{\theta_N}
J_{\La/\Om}(x_N,\theta_N)\right]_{x_N=0} \neq 0$ only if
$g^{\La}_{\Om \tilde 0} \neq 0$, that is only if
$\La/\Om$ is a horizontal $\tilde 0$-strip by
Proposition~\ref{PropgII}.
If $\La/\Om$ is a horizontal $\tilde 0$-strip  then
the diagram of $\Om$ corresponds to the diagram of $\La$ with one of
its circles removed.  It is then immediate that
$\Sp_{N-1,m-1} [J_{\Om}(x_-, \theta_-)]$ is divisible by
$\tilde{b}_\La^{(\alpha,N)}$
given that  $\tilde \S (\Lambda)$ is contained
in $\S \Omega=\Omega^{\circledast}/(m-1,\dots,1)$.
Since every term in the right-hand side of
\eqref{eqimportante} is divisible by $\tilde{b}_\La^{(\alpha,N)}$, so is
$\Spt_{N,m} [J_\La]$.  The theorem then
follows given that
$\tilde{b}_\La^{(\alpha,N)}$ is a monic polynomial in $N$ of degree
$|\La|-m(m-1)/2$.
\end{proof}

Let us illustrate the second evaluation formula with the
superpartition $\La=(3,0;2,1)$.  The evaluation amounts to filling
the cells of $\tilde{\mathcal{S}} (3,0;2,1)$ with
the numbers  $\tilde{b}_{(3,0;2,1)}^{(\alpha,N)} (i,j)$.
Using $\bar k= N-k$, this corresponds to the filling
\begin{equation} {\small{\tableau[mcY]{\bl
{\mbox{\tiny$$}}&{\mbox{\tiny $\bar{1}\!+\!\alpha$}} &{\mbox{\tiny$\bar{1}\!+\!2\alpha$}}\\
{\mbox{\tiny$\bar{2}$}}&{\mbox{\tiny$\bar{2}\!+\!\alpha$}} \\
{\mbox{\tiny $ \bar{3}$}}
}} }
\end{equation}
which gives
\begin{equation}\Spt_{N,2}\,[J_{(3,0;2,1)}]=(N-1+\alpha)(N-1+2\alpha)(N-2)(N-2+\alpha)(N-3).\end{equation}

\subsection{Homomorphisms}
Evaluations
over rings are usually  defined in algebra as ring homomorphisms.  As an example, consider the ring $\mathrm{Sym}$ formed by the symmetric functions in the indeterminates $x=(x_1,x_2,\ldots)$
with coefficients in the field $\mathbb{Q}(\alpha)$.  Then, Macdonald \cite[Section I.2]{Mac} defines the standard evaluation of symmetric functions as the homomorphism
$\varepsilon_X :\,\mathrm{Sym}\to \mathbb{Q}(\alpha)[X]
$
such that
\begin{equation}\label{defevalhomo2}\varepsilon_X(p_n)=X,\quad  \forall\, n\geq 1.
\end{equation}
In the case of the usual Jack symmetric functions $J_\la$, it is possible to show that \cite[Eq. 10.25]{Mac}
\begin{equation}\varepsilon_X(J_\lambda)=\prod_{(i,j)\in\la}(X-(i-1)+\alpha(j-1)).
\end{equation}
Setting $X=N$ in the last equation, we recover Stanley's result for the evaluation of a Jack symmetric function  at $x_1=\ldots=x_N=1$ and $x_{N+1}=x_{N+2}=\ldots=0$ \cite[Theo.\ 5.4]{Stan}.

Obviously, the evaluation operator $E_{N,m}$ introduced in  \eqref{eqdefENm} is not a homomorphism.
Rather, it is a linear map from the vector space $\mathscr{R}^{n,m}_N$ to the ring  $\mathbb Q(\alpha)[N]$.
It is nevertheless possible to define an evaluation homomorphism $\mathcal{E}_{X,Y}^M$ on the whole ring   $\mathscr{R}$ of symmetric superfunctions
with coefficient in $\mathbb{Q}(\alpha)$. As we show below,  $\mathcal{E}_{X,Y}^M$ generalizes the homomorphism $\varepsilon_X$ defined above and
connects with  $E_{N,m}$ in the special case where $X=N$ and $Y=M=m$.

Now let $R$ and $R[\phi_1,\ldots,\phi_M]$ respectively denote the polynomial ring $\mathbb{Q}(\alpha)[X,Y]$ and the ring of polynomials in the Grassmann variables
$\phi_1,\ldots,\phi_M$ with coefficients in $R$. We define \begin{equation} \mathcal{E}_{X,Y}^M:\,\mathscr{R}\to R[\phi_1,\ldots,\phi_M]\end{equation} as the homomorphism such that \begin{equation} \mathcal{E}_{X,Y}^M(p_{n+1})=X, \qquad \mathcal{E}_{X,Y}^M(\tilde{p}_{n})=\sum_{i=1}^M\phi_i \varepsilon_Y(h_{n+i-M}),\qquad \forall \,n\geq 0,\end{equation} where $h_r$ denotes the complete symmetric function of degree $r$ \cite[p.\ 21]{Mac}.  By convention, $h_r$ is equal to 1 for $r=0$ and to 0 whenever $r<0$.  On readily shows \cite{Mac}
that if $r>0$, \begin{equation} \varepsilon_Y(h_r)=\binom{Y+r-1}{r}=(-1)^r\binom{-Y}{r}. \end{equation}

\begin{proposition}\label{propevalhomo} For any symmetric
function in superspace
$F(x,\theta)$ of fermionic degree $m$, we have
  \begin{equation} \mathcal{E}_{N,m}^m(F(x,\theta))
=\phi_1\cdots \phi_m E_{N,m}(F(x,\theta)).
\end{equation} \end{proposition}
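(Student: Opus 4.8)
The plan is to reduce the identity to a single power sum by linearity, and then to compute both sides directly: the left‑hand side using that $\mathcal{E}_{N,m}^m$ is a ring homomorphism, the right‑hand side using the $\theta$‑expansion of a product of $\tilde p$'s. Both $F\mapsto\mathcal{E}_{N,m}^m(F)$ and $F\mapsto\phi_1\cdots\phi_m\,E_{N,m}(F)$ are $\mathbb{Q}(\alpha)$‑linear in $F$ and polynomial in $N$, and the power sums $p_\Lambda$ with $\Lambda$ of fermionic degree $m$ span $\bigoplus_n\mathscr{R}^{n,m}$, so it suffices to treat $F=p_\Lambda$. I would write
\begin{equation*}
p_\Lambda=\tilde p_{\Lambda_1}\cdots\tilde p_{\Lambda_m}\,p_{\Lambda^s}(x),\qquad p_{\Lambda^s}(x)=p_{\Lambda_{m+1}}(x)\cdots p_{\Lambda_\ell}(x),
\end{equation*}
with $\Lambda_1>\cdots>\Lambda_m\geq 0$, and set $\gamma:=\Lambda^a-\delta_m$, which is a partition.

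For the left‑hand side, multiplicativity gives $\mathcal{E}_{N,m}^m(p_\Lambda)=\bigl(\prod_{l=1}^m\mathcal{E}_{N,m}^m(\tilde p_{\Lambda_l})\bigr)\,\mathcal{E}_{N,m}^m(p_{\Lambda^s})$, and the second factor equals $N^{\ell-m}$ since $\mathcal{E}_{N,m}^m(p_k)=N$ for all $k\geq1$. Expanding $\prod_{l=1}^m\mathcal{E}_{N,m}^m(\tilde p_{\Lambda_l})=\prod_{l=1}^m\bigl(\sum_{i=1}^m\phi_i\,\varepsilon_m(h_{\Lambda_l+i-m})\bigr)$, where $\varepsilon_m$ denotes $\varepsilon_Y$ specialized at $Y=m$ (so $\varepsilon_m(h_r)=h_r(1^m)$), and discarding all but the injective index choices because $\phi_i^2=0$, one obtains
\begin{equation*}
\prod_{l=1}^m\mathcal{E}_{N,m}^m(\tilde p_{\Lambda_l})=\phi_1\cdots\phi_m\,\det\bigl(\varepsilon_m(h_{\Lambda_l+i-m})\bigr)_{1\leq i,l\leq m}=\phi_1\cdots\phi_m\,\varepsilon_m\!\left(\det\bigl(h_{\gamma_l-l+i}\bigr)_{1\leq i,l\leq m}\right),
\end{equation*}
using $\Lambda_l+i-m=\gamma_l-l+i$. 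The matrix $\bigl(h_{\gamma_l-l+i}\bigr)_{1\leq i,l\leq m}$ is the transpose of the Jacobi--Trudi matrix of $s_\gamma$ \cite[Sect. I.3]{Mac}, so its determinant is $s_\gamma$, and applying the homomorphism $\varepsilon_m$ yields $s_\gamma(1^m)$. Hence $\mathcal{E}_{N,m}^m(p_\Lambda)=\phi_1\cdots\phi_m\,s_\gamma(1^m)\,N^{\ell-m}$.

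For the right‑hand side I would first note that $E_{N,m}=\Sp_{N,m}$, since $\partial_{\theta_m}\cdots\partial_{\theta_1}F=f_{1,\dots,m}$ and $V_m(x)=\prod_{1\leq i<j\leq m}(x_i-x_j)$. From the identity
\begin{equation*}
\tilde p_{\Lambda_1}\cdots\tilde p_{\Lambda_m}=\sum_{j_1<\cdots<j_m}\theta_{j_1}\cdots\theta_{j_m}\,\det\bigl(x_{j_k}^{\Lambda_l}\bigr)_{1\leq k,l\leq m}
\end{equation*}
one gets $\partial_{\theta_m}\cdots\partial_{\theta_1}(p_\Lambda)=\det\bigl(x_k^{\Lambda_l}\bigr)_{1\leq k,l\leq m}\,p_{\Lambda^s}(x)$; dividing by $V_m(x)=\det\bigl(x_k^{m-l}\bigr)_{1\leq k,l\leq m}$ and invoking the bialternant formula for Schur polynomials turns this into $s_\gamma(x_1,\dots,x_m)\,p_{\Lambda^s}(x)$, and setting $x_1=\cdots=x_N=1$ gives $E_{N,m}(p_\Lambda)=s_\gamma(1^m)\,N^{\ell-m}$. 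Comparing with the previous paragraph completes the verification.

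I do not anticipate a substantive obstacle; the only delicate point is bookkeeping — keeping the staircase shift $\delta_m$, the transpose relating our matrix to the Jacobi--Trudi matrix, the column ordering in the Vandermonde/bialternant identities, and the signs produced by reordering Grassmann variables all mutually consistent. The two displayed expansions of $\tilde p_{\Lambda_1}\cdots\tilde p_{\Lambda_m}$ and of $\prod_l\mathcal{E}_{N,m}^m(\tilde p_{\Lambda_l})$ are exactly where this care is required.
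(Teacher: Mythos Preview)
Your proof is correct and follows essentially the same route as the paper's: reduce by linearity to $F=p_\Lambda$, expand the Grassmann product $\prod_l\sum_i\phi_i A_{l,i}$ as a determinant, identify the resulting determinant with $s_{\Lambda^a-\delta_m}$ via Jacobi--Trudi on one side and via the bialternant formula on the other, and match the two specializations. The only differences from the paper are cosmetic (order of the two computations, and the paper packages the Grassmann determinant expansion as a separate lemma).
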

\begin{proof}
It is enough to show that the proposition holds when
$F(x,\theta)=p_{\Lambda}$, where $\La=(\La^a;\La^s)$.
Let us first recall a basic lemma: if  $F_i=\sum_{j} \phi_j A_{i,j}$, where $\phi_j$ and $A_{i,j}$  respectively denote anticommutative and commutative variables, then
  \begin{equation}\label{eqformuladet} F_{1}\cdots F_{m}= \sum_{1\leq j_1<\ldots<j_m\leq M}\phi_{j_1}\cdots\phi_{j_m}\det\Big[A_{i,j}\Big]_{\substack{i= 1,\ldots,m\\j=j_1,\ldots,j_m}}.\end{equation}
Applying formula \eqref{eqformuladet} to the case
\begin{equation}
F_{1}\cdots F_{m} = \tilde p_{\Lambda_1^a} \cdots \tilde p_{\Lambda_m^a}
\end{equation}
which corresponds to  $\phi_i=\theta_i$ and $A_{i,j}=x_j^{\Lambda^a_i}$, we get
\begin{align}p_\La(x;\theta)&=\sum_{1\leq j_1<\ldots<j_m\leq m}\theta_{j_1}\cdots\theta_{j_m}\,a_{\La^a}(x_{j_1},\ldots,x_{j_m})\,p_{\La^s}(x_1,\ldots,x_N)\\
&=\sum_{1\leq j_1<\ldots<j_m\leq m}\theta_{j_1}\cdots\theta_{j_m}\,V_m(x_{j_1},\ldots,x_{j_m})s_{\La^a-\delta_m}(x_{j_1},\ldots,x_{j_m})
\,p_{\La^s}(x_1,\ldots,x_N),\end{align}
where $a_{\lambda}(x_1,\dots,x_m)=\sum_{\sigma \in S_m} {\rm sgn}(\sigma) x^{\lambda_1}_{\sigma(1)}
\cdots x^{\lambda_m}_{\sigma(m)}$ and  where the second equality follows from  the
standard definition of Schur functions.
Consequently
  \begin{equation}E_{N,m}(p_\La)= N^{\ell(\La^s)}s_{\La^a-\delta_m}(1^m).\end{equation}
It thus remains to prove that
\begin{equation}\mathcal{E}^m_{N,m}(p_\La)=\phi_1\ldots\phi_m N^{\ell(\La^s)}s_{\La^a-\delta_m}(1^m).\end{equation}
Returning to \eqref{eqformuladet} and setting $A_{i,j}=
\varepsilon_Y(h_{\Lambda_i^a+j-M})$, we get
 \begin{equation} \mathcal{E}_{X,Y}^M(p_{\La})=X^{\ell(\La^s)}
\sum_{1\leq j_1<\ldots<j_m\leq M}\phi_{j_1}\cdots\phi_{j_m}\det\Big[
\varepsilon_Y(h_{\Lambda_i^a+j-M})
\Big]_{\substack{i= 1,\ldots,m\\j=j_1,\ldots,j_m}}.
\end{equation}
  When $M=m$, the sum on the RHS produces only one term, which is equal to \begin{equation}\phi_1\cdots\phi_m \det\Big[ \varepsilon_Y(h_{\La^a_i+j-m})\Big]_{\substack{i= 1,\ldots,m\\j=1,\ldots,m}}.\end{equation}
Now, according to the Jacobi-Trudi formula \cite[Eq.  3.4]{Mac},
\begin{equation} s_{\la-\delta_m}= \det\Big[ h_{\la_i+j-m}
\Big]_{\substack{i= 1,\ldots,m\\j=1,\ldots,m}},
\end{equation}
 where it is understood that we are working with symmetric polynomials in $m$ variables.  Hence \begin{equation} \mathcal{E}_{X,Y}^m(p_{\La})=
X^{\ell(\La^s)}\phi_1\cdots\phi_m \varepsilon_Y (s_{\La^a-\delta_m}),
\end{equation}
which reduces to the desired equation for $X=N$ and $Y=m$.
\end{proof}

\section{Normalization of the Jack polynomials in superspace}
\label{Snorm}

The simplest way of writing
the normalization of the standard Jack polynomials is in terms
of the upper and lower hook-lengths \cite{Stan}. The same is true for the norm of the Jack superpolynomials: it is expressed in terms of the superpartition hook-lengths.  Recall that leg-lengths and arm-lengths were
defined in Section~\ref{sectintro}.
\begin{definition}\label{defHook}
The upper and lower hook-lengths of $s\in\La$
 are respectively given by
\begin{equation}
h^{(\alpha)}_\La(s)={l}_{\Lambda^{\circledast}}(s)+\alpha(a_{\La^*}(s)+1)\qquad \text{and}\qquad
h^\La_{(\alpha)}(s)=l_{\La^*}(s)+1+\alpha\,{a}_{\Lambda^{\circledast}}(s).
\end{equation}
\end{definition}

\begin{lemma}\label{lemmahook}For any diagram $\La$, the two hook-lengths are related by
\begin{equation}h_{(\alpha)}^\La(i,j)=\alpha \,h_{\La'}^{(1/\alpha)}(j,i).\end{equation}
\end{lemma}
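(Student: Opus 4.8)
The plan is to reduce the statement to two elementary facts: that conjugation of a superpartition commutes with the operations $\circledast$ and $*$, and that for an ordinary diagram the arm-length and leg-length are exchanged under conjugation of the diagram.

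First I would record that, directly from the diagrammatic definition of conjugation given in Section~\ref{Spp}, reflecting the diagram of $\La$ about the main diagonal simultaneously reflects the diagrams of $\La^{\circledast}$ and of $\La^*$; hence $(\La')^{\circledast}=(\La^{\circledast})'$ and $(\La')^*=(\La^*)'$. Next I would invoke the classical identities for partitions (see \cite{Mac}): if $\mu$ is a diagram and $(i,j)\in\mu$, then $(j,i)\in\mu'$ and $a_{\mu'}(j,i)=l_\mu(i,j)$, $l_{\mu'}(j,i)=a_\mu(i,j)$. Applying these with $\mu=\La^{\circledast}$ and with $\mu=\La^*$, together with the commutation just noted, yields $l_{(\La')^{\circledast}}(j,i)=a_{\La^{\circledast}}(i,j)$ and $a_{(\La')^*}(j,i)=l_{\La^*}(i,j)$. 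Finally I would substitute these into the defining expression for the upper hook-length of $\La'$ at the cell $(j,i)$ with parameter $1/\alpha$, namely
\begin{equation*}
h^{(1/\alpha)}_{\La'}(j,i)=l_{(\La')^{\circledast}}(j,i)+\frac1\alpha\bigl(a_{(\La')^*}(j,i)+1\bigr)=a_{\La^{\circledast}}(i,j)+\frac1\alpha\bigl(l_{\La^*}(i,j)+1\bigr),
\end{equation*}
and multiply by $\alpha$ to obtain $\alpha\,a_{\La^{\circledast}}(i,j)+l_{\La^*}(i,j)+1$, which is exactly $h^\La_{(\alpha)}(i,j)$ by Definition~\ref{defHook}.

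Since each ingredient is either the definition of superpartition conjugation or a standard fact about ordinary partitions, I do not expect any genuine obstacle; the only point requiring a moment's care is bookkeeping, that is, keeping straight which of the two diagrams $\La^{\circledast}$ or $\La^*$ each arm- or leg-length is evaluated in, and correctly transposing the cell $(i,j)\mapsto(j,i)$ throughout.
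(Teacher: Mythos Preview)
Your proof is correct and follows essentially the same approach as the paper: the paper's proof is the one-line observation that the lemma is an immediate consequence of the identity $l_\lambda(i,j)=a_{\lambda'}(j,i)$, which is precisely the key ingredient you unpack in detail (together with the compatibility of conjugation with $\circledast$ and $*$).
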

\begin{proof}This is an immediate consequence of the identity $l_\la(i,j)=a_{\la'}(j,i)$.\end{proof}

Now recall Definition \ref{defJackJ}
 relating the non-monic
 $J_\La$ to its monic counterpart $P_\La$. If we want to re-express Proposition \ref{PropFactoI} in terms of $J_\La$, this will necessarily involve a non-trivial proportionality factor since the Jack superpolynomials
on the two sides of the equation are different. Stated precisely,
 there must exist a rational function $r_\La(\alpha)$
 such that
\begin{equation} \label{eqdefr} J_\La(x_1,\ldots,x_\ell;\theta_1,\ldots,\theta_\ell)=r_\La(\alpha) \, x_1\cdots x_\ell \,J_{\mathcal{C}\La}(x_1,\ldots,x_\ell;\theta_1,\ldots,\theta_\ell),
\end{equation}
if $\La$ is a superpartition of length $\ell$  whose first column is bosonic.
Similarly,
{it follows } from Proposition~\ref{PropFactoII}
that there exists  a
$\tilde{r}_\La(\alpha)$ such that \begin{equation}\label{eqdefrtilde}
(-1)^{m-1}
\Bigl[ \partial_{\theta_\ell}\,J_\La(x_1,\ldots,x_\ell;\theta_1,\ldots,\theta_\ell)
\Bigr]_{x_\ell=0}
=
\tilde{r}_\La(\alpha)J_{\widetilde{\C}\La}(x_1,\ldots,x_{\ell-1};\theta_1,\ldots,\theta_{\ell-1}).
\end{equation}
when
$\ell(\La^\circledast)=\ell(\La^*)+1$. The rationale for introducing $r_\La(\aa)$ and $\tilde r_\La(\aa)$ is that they are the building blocks of the norm expression. Actually, the hook-lengths do appear in the norm via these proportionality factors.

 The next two propositions give the relation between
 $r_\La(\alpha)$ and $\tilde{r}_\La(\alpha)$ with the lower and upper hook-lengths respectively. They imply in particular that  $r_\La(\alpha)$ and $\tilde{r}_\La(\alpha)^{-1}$  are
 polynomials in $\alpha$.

\begin{proposition} \label{proprla}
Let $\La$ be a superpartition such that $\La_i>0$ for all $1\leq i\leq\ell$ and let $r_\La(\alpha)$  {be defined by \eqref{eqdefr}}. Then
\begin{equation}r_\La(\alpha)=\prod_{i=1}^\ell h^\La_{(\alpha)}(i,1).\end{equation}
\end{proposition}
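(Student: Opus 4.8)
The plan is to read off $r_\La(\alpha)$ from the evaluation formula of Theorem~\ref{TheoSpecialI} by applying the map $\Sp_{\ell,m}$ (the case $N=\ell$) to both sides of the defining identity \eqref{eqdefr}, and then to evaluate the resulting quotient of products combinatorially. First note that the hypothesis $\La_i>0$ for $1\leq i\leq\ell$ forces $\La^*_i\geq 1$ for every row, so no circle of $\La$ can sit in the first column; hence $\C\La$ is well defined, has fermionic degree $m$, length $\ell(\C\La)\leq\ell$, and satisfies $(\C\La)^\circledast=(\La^\circledast_1-1,\dots,\La^\circledast_\ell-1)$ (trailing zeros dropped). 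Since \eqref{eqdefr} is an identity between polynomials in the variables $(x_1,\dots,x_\ell;\theta_1,\dots,\theta_\ell)$, we are entitled to apply $\Sp_{\ell,m}$ to it.

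Because $x_1\cdots x_\ell$ is symmetric and free of Grassmann variables, it commutes with $\partial_{\theta_m}\cdots\partial_{\theta_1}$ and with the division by $V_m(x)$, and it equals $1$ at $x_1=\cdots=x_\ell=1$; consequently $\Sp_{\ell,m}[\,x_1\cdots x_\ell\,J_{\C\La}\,]=\Sp_{\ell,m}[J_{\C\La}]$. Applying Theorem~\ref{TheoSpecialI} to $J_\La$ and to $J_{\C\La}$ (legitimate since $\ell\geq\ell(\La)\geq\ell(\C\La)$) therefore yields
\begin{equation*}
b_\La^{(\alpha,\ell)}=r_\La(\alpha)\,b_{\C\La}^{(\alpha,\ell)},\qquad\text{so}\qquad r_\La(\alpha)=\frac{b_\La^{(\alpha,\ell)}}{b_{\C\La}^{(\alpha,\ell)}},
\end{equation*}
the denominator being a nonzero polynomial in $\alpha$.

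It then remains to compute this quotient row by row. Writing $d_i$ for the $i$-th part of the staircase $\delta_{m+1}$ (so $d_i=m+1-i$ for $i\leq m$ and $d_i=0$ for $i>m$), the $i$-th row of $\S\La$ consists of the cells in columns $d_i+1,\dots,\La^\circledast_i$, while the $i$-th row of $\S(\C\La)=(\C\La)^\circledast/\delta_{m+1}$ consists of the cells in columns $d_i+1,\dots,\La^\circledast_i-1$. The crucial verification — which I expect to be the main obstacle — is that $\La^\circledast_i\geq d_i+1$ for every $i\leq\ell$, so that in each row the cells of $\S(\C\La)$ form an initial segment of those of $\S\La$ and the quotient telescopes: for $i>m$ this is because $\ell(\La^*)=\ell$, and for $i\leq m$ it follows from the fact that the $m$ fermionic rows of $\La^\circledast$ have $m$ distinct lengths all $\geq 2$ (here the assumption $\La_m\geq 1$ enters), whence the $i$-th longest row of $\La^\circledast$ has length $\geq m-i+2=d_i+1$. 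Granting this, row $i$ contributes to the quotient exactly the single factor attached to the cell $(i,\La^\circledast_i)$, namely $\ell-(i-1)+\alpha(\La^\circledast_i-1)$, so that
\begin{equation*}
r_\La(\alpha)=\prod_{i=1}^{\ell}\bigl(\ell-(i-1)+\alpha(\La^\circledast_i-1)\bigr).
\end{equation*}
Finally I would recognize each factor as a lower hook-length: since $\ell(\La^*)=\ell$ we have $l_{\La^*}(i,1)=(\La^*)'_1-i=\ell-i$, and $a_{\La^\circledast}(i,1)=\La^\circledast_i-1$, so that $\ell-(i-1)+\alpha(\La^\circledast_i-1)=l_{\La^*}(i,1)+1+\alpha\,a_{\La^\circledast}(i,1)=h^\La_{(\alpha)}(i,1)$, the cell $(i,1)$ being a square of $\La$ for each $i\leq\ell$ because $\La^*_i\geq 1$. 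This gives $r_\La(\alpha)=\prod_{i=1}^\ell h^\La_{(\alpha)}(i,1)$, as claimed.
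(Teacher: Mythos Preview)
Your proof is correct and follows essentially the same approach as the paper: apply $\Sp_{\ell,m}$ to both sides of \eqref{eqdefr}, use Theorem~\ref{TheoSpecialI} to write $r_\La(\alpha)=b_\La^{(\alpha,\ell)}/b_{\C\La}^{(\alpha,\ell)}$, and then identify the surviving cell in each row as contributing exactly $h^\La_{(\alpha)}(i,1)$. You give a more careful justification of why the cells of $\S(\C\La)$ form an initial segment of those of $\S\La$ row by row (via the inequality $\La^\circledast_i\geq d_i+1$), whereas the paper simply writes the quotient as a product over $\S\La/\S(\C\La)$ without comment, but otherwise the arguments are identical.
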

\begin{proof}
We apply the evaluation $\Sp_{\ell,m}$ on both sides of  \eqref{eqdefr} with $m$ standing for the fermionic degree of $\La$.  This yields
\begin{equation}r_\La(\alpha)=\frac{\Sp_{\ell,m}\,[J_\La]}{\Sp_{\ell,m}\,[J_{\C\La}] }
.\end{equation}
Here it is crucial that the first column of $\Lambda$ be bosonic
to insure that the action of $\C$ on $\La$ is well defined.
Now,
Theorem \ref{TheoSpecialI} (i.e., \eqref{spegen}) implies that
\begin{equation}
r_\La(\alpha)=\frac{b_\La^{(\alpha,\ell)}}{b_{\C\La}^{(\alpha,\ell)}}.
\end{equation}
From the definition of $b_\La^{(\alpha,N)}$ given in \eqref{eqblambda}, it then follows that
\begin{equation}\label{calp1}r_\La(\alpha)=\prod_{(i,j)\in \mathcal{S}\La/\mathcal{S}(\C\La)}\left(\ell-(i-1)+\alpha(j-1)\right) = \prod_{\substack{(i,j)\in \mathcal{S}\La\\j= \La^\cd_i} }\left(\ell-(i-1)+\alpha(j-1)\right),\end{equation}
where $\La^\cd_i$ denotes the number of cells in the $i$th row of the diagram $\La^\cd$.
 However, $\ell-(i-1)=l_{\La^*}(i,1)+1$ while $j-1={a}_{\La^{\circledast}}(i,1)$ if $j=\La^\cd_i$.  Thus,
\begin{equation}\label{calp2}r_\La(\alpha)=\prod_{i=1}^\ell\left(l_{\La^*}(i,1)+1+\alpha\,{a}_{\La^\circledast}(i,1)\right),\end{equation}
and the proof is complete. \end{proof}

\begin{proposition} \label{proprlat}
Let $\La$ be a superpartition such that $\ell(\La^\circledast)=\ell(\La^*)+1$. Moreover, let ${\mathrm{fr}}(\La)$ denote the set of fermionic rows in the diagrams of $\La$ and let $\tilde{r}_\La(\alpha)$
be the function introduced in \eqref{eqdefrtilde}.   Then
\begin{equation}\tilde{r}_\La(\alpha)=\prod_{(i,1)\in{\mathrm{fr}}(\La)} \frac{1}{h^\La_{(\alpha)}(i,1)}.\end{equation}
\end{proposition}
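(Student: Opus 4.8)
The plan is to imitate the proof of Proposition~\ref{proprla}, replacing the evaluation $\Sp$ by its reduced version $\Spt$. First I would apply $\Sp_{\ell-1,m-1}$ to both sides of the defining relation \eqref{eqdefrtilde}, regarding it as an identity in $\ell$ bosonic and $\ell$ fermionic variables on the left and in $\ell-1$ of each on the right. Since the hypothesis $\ell(\La^\circledast)=\ell(\La^*)+1$ forces $\La_m=0$, the first column of $\La$ ends with a circle, $\widetilde{\mathcal C}$ is well defined, and $\widetilde{\mathcal C}\La$ is a superpartition of fermionic degree $m-1$ and length $\ell-1$ with $(\widetilde{\mathcal C}\La)^*=\La^*$ and $(\widetilde{\mathcal C}\La)^\circledast=(\La^\circledast_1,\dots,\La^\circledast_{\ell-1})$. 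By the definition \eqref{StS} of $\Spt$, the left side becomes $\Spt_{\ell,m}[J_\La]$, so
\begin{equation*}
\Spt_{\ell,m}[J_\La]=\tilde r_\La(\alpha)\,\Sp_{\ell-1,m-1}[J_{\widetilde{\mathcal C}\La}].
\end{equation*}
Theorem~\ref{TheoSpecialII} (applicable since $\ell\ge\ell(\La)$) gives $\Spt_{\ell,m}[J_\La]=\tilde b_\La^{(\alpha,\ell)}$, and Theorem~\ref{TheoSpecialI} (applicable since $\ell-1=\ell(\widetilde{\mathcal C}\La)$) gives $\Sp_{\ell-1,m-1}[J_{\widetilde{\mathcal C}\La}]=b_{\widetilde{\mathcal C}\La}^{(\alpha,\ell-1)}$, whence
\begin{equation*}
\tilde r_\La(\alpha)=\frac{\tilde b_\La^{(\alpha,\ell)}}{b_{\widetilde{\mathcal C}\La}^{(\alpha,\ell-1)}}.
\end{equation*}

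Next I would simplify this ratio. The crucial point is that the shift $N\mapsto N-1$ built into $\tilde b$ relative to $b$ makes the two cell-fillings coincide: the weight of a cell $(i,j)$ in $\tilde b_\La^{(\alpha,\ell)}$ is $(\ell-1)-(i-1)+\alpha(j-1)=\ell-i+\alpha(j-1)$, and so is its weight in $b_{\widetilde{\mathcal C}\La}^{(\alpha,\ell-1)}$. Hence, once one checks that $\tilde{\mathcal S}\La\subseteq\mathcal S(\widetilde{\mathcal C}\La)$ as sets of cells, the quotient telescopes to $\prod\bigl(\ell-i+\alpha(j-1)\bigr)^{-1}$ over the complement $\mathcal S(\widetilde{\mathcal C}\La)\setminus\tilde{\mathcal S}\La$. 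Both $\tilde{\mathcal S}\La=\La^*/\delta_m$ and $\mathcal S(\widetilde{\mathcal C}\La)=(\widetilde{\mathcal C}\La)^\circledast/(m-1,m-2,\dots,1)$ remove the same staircase $(m-1,\dots,1)$ (as $\delta_m=(m-1,\dots,1,0)$ is that staircase as a diagram), while $\La^*\subseteq(\widetilde{\mathcal C}\La)^\circledast$ with set difference equal to the $m-1$ circles of $\La$ lying in rows above the last; so $\mathcal S(\widetilde{\mathcal C}\La)\setminus\tilde{\mathcal S}\La$ is exactly those of these $m-1$ circles that fall outside the staircase.

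To conclude, write $f_1<\cdots<f_m=\ell$ for the fermionic rows of $\La$. From $\La^\circledast_{f_1}>\cdots>\La^\circledast_{f_m}=1$ one gets $\La^\circledast_{f_k}\ge m-k+1$, and $f_k\ge k$, so $\La^\circledast_{f_k}>m-f_k$ for every $k\le m-1$: none of the $m-1$ circles in rows $f_1,\dots,f_{m-1}$ lies in the staircase, so all of them contribute. The same inequalities give $\La^*_{f_k}=\La^\circledast_{f_k}-1\ge1$ for $k\le m-1$, so the first-column cells of the fermionic rows of $\La$ are precisely $(f_1,1),\dots,(f_{m-1},1)$, i.e.\ the set $\mathrm{fr}(\La)$. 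The contributing circle in row $f_k$ sits in column $\La^\circledast_{f_k}$, and since $\ell(\La^*)=\ell-1$ one has $l_{\La^*}(f_k,1)=(\ell-1)-f_k$ and $a_{\La^\circledast}(f_k,1)=\La^\circledast_{f_k}-1$, so its weight $\ell-f_k+\alpha(\La^\circledast_{f_k}-1)$ equals $l_{\La^*}(f_k,1)+1+\alpha\,a_{\La^\circledast}(f_k,1)=h^\La_{(\alpha)}(f_k,1)$ by Definition~\ref{defHook}. Therefore $\tilde r_\La(\alpha)=\prod_{k=1}^{m-1}h^\La_{(\alpha)}(f_k,1)^{-1}=\prod_{(i,1)\in\mathrm{fr}(\La)}h^\La_{(\alpha)}(i,1)^{-1}$. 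I expect the only real work to be the combinatorial bookkeeping of this last paragraph: verifying the inclusion $\tilde{\mathcal S}\La\subseteq\mathcal S(\widetilde{\mathcal C}\La)$, pinning down exactly which circles survive once the staircase is deleted, and keeping the $N\leftrightarrow N-1$ shift straight so that the two skew-diagram products cancel cleanly; the evaluation-theoretic input itself is immediate from Theorems~\ref{TheoSpecialI} and~\ref{TheoSpecialII}.
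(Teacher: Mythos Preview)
Your proof is correct and follows the same approach as the paper: apply $\Sp_{\ell-1,m-1}$ to both sides of \eqref{eqdefrtilde}, invoke Theorems~\ref{TheoSpecialI} and~\ref{TheoSpecialII} to obtain $\tilde r_\La(\alpha)=\tilde b_\La^{(\alpha,\ell)}/b_{\tilde{\C}\La}^{(\alpha,\ell-1)}$, and then identify the surviving cells in $\S(\tilde{\C}\La)\setminus\tilde{\S}\La$ with the first-column hooks in the fermionic rows. Your combinatorial bookkeeping (verifying that the two skew diagrams share the same staircase, that $\tilde{\S}\La\subseteq\S(\tilde{\C}\La)$, and that none of the $m-1$ surviving circles falls inside the staircase) is more explicit than the paper's, but the argument is essentially identical.

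One small point of interpretation: you read the index set ``$(i,1)\in\mathrm{fr}(\La)$'' as the first-column \emph{squares} lying in fermionic rows, which excludes row $\ell$ (since $\La^*_\ell=0$) and matches your $m-1$ factors directly. The paper instead keeps row $\ell$ in the product and argues that its contribution is trivial. Your reading avoids that extra step and is arguably the cleaner one; either way the final formula is the same.
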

\begin{proof}We  set $N=\ell=\ell(\La^\circledast)$ and $m=\overline{\underline{\La}}$.  Applying the evaluation $E_{\ell-1,m-1}$
on both sides of \eqref{eqdefrtilde} and using relation (\ref{StS}),
we get
\begin{equation}\tilde{r}_\La(\alpha)=\frac{\Spt_{\ell,m}\,[J_\La]}
{\Sp_{\ell-1,m-1}\,[J_{ \tilde{\C}\La}]}
=\frac{\tilde{b}_\La^{(\alpha,\ell)}}{b_{ \tilde{\C}\La}^{(\alpha,\ell-1)}} \, .
\end{equation}
The second equality follows from Theorem \ref{TheoSpecialI}.
Comparing the diagrams of $\tilde{\mathcal{S}}\La$ and $\mathcal{S} (\tilde{\C}\La)$, we find
\begin{multline}\label{ctalpa}\tilde{r}_\La(\alpha)= \! \!
\prod_{(i,j)\in\mathcal{ S}(\tilde{\C}\La)/\tilde{\S}\La} \! \!
\frac1{\left(\ell-1-(i-1)+\alpha(j-1)\right)}
= \!\!
\prod_{\substack{(i,1)\in \mathrm{fr}(\La)\\ i \neq \ell}} \! \!
\frac1{\left(\ell-i+\alpha(\La^\cd_i-1)\right)} .
 \end{multline}
Again, we have $\ell-i=l_{\La^*}(i,1)+1$ while $\La^\cd_i-1={a}_{\La^\circledast}(i,1)$.  Moreover, since $l_{\La^*}(\ell,1)=0$ and $a_{\La^\circledast}(\ell,1)=0$, we can add the contribution of the square $(1,\ell)$ without modifying the result.
 Hence we can write
\begin{equation}\tilde{r}_\La(\alpha)=\prod_{(i,1)\in \mathrm{fr}(\La)}\left(l_{\La^*}(i,1)+1+\alpha\,{a}_{\La^\circledast}(i,1)\right)^{-1},\end{equation}
which is the desired result. \end{proof}

\begin{theorem}
Let $\mathcal{B}\La$ denote the bosonic content of $\La$, i.e., the set of squares
in $\La$ that do not appear at the same time in a row containing a circle and in a column containing a circle.  Then the coefficient  of $m_\La$ in $J_\La$ is
\begin{equation}v_\La(\alpha)=\prod_{(i,j)\in\mathcal{B}\La}h^\La_{(\alpha)}(i,j).\end{equation}
\end{theorem}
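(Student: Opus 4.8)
The plan is to induct on the number of cells in the diagram of $\La$ (that is, on $|\La^{\circledast}|$), stripping the first column one step at a time by means of the operators $\C$ and $\widetilde{\C}$ of Section~\ref{codec}. The base case $\La=\emptyset$ is immediate: $J_\emptyset=1=m_\emptyset$ and $\mathcal{B}\emptyset=\emptyset$, so $v_\emptyset(\alpha)=1$ equals the empty product. For the inductive step I first record the relevant recursions on the coefficients $v_\La(\alpha)$. Writing $J_\La=v_\La(\alpha)P_\La$ and $J_{\C\La}=v_{\C\La}(\alpha)P_{\C\La}$ and using Proposition~\ref{PropFactoI}, one gets $J_\La=(v_\La/v_{\C\La})\,x_1\cdots x_\ell\,J_{\C\La}$ whenever the first column of $\La$ is bosonic; comparing with the definition \eqref{eqdefr} of $r_\La(\alpha)$ gives $v_\La(\alpha)=r_\La(\alpha)\,v_{\C\La}(\alpha)$. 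In the same way, Proposition~\ref{PropFactoII} and \eqref{eqdefrtilde} give $v_\La(\alpha)=\tilde{r}_\La(\alpha)\,v_{\widetilde{\C}\La}(\alpha)$ whenever the first column of $\La$ is fermionic, i.e. when $\La$ has a (necessarily unique) part equal to $0$, equivalently $\ell(\La^{\circledast})=\ell(\La^*)+1$. Every nonempty superpartition falls into exactly one of these two cases, and each operation strictly decreases $|\La^{\circledast}|$, so the induction is well founded.

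Suppose the first column of $\La$ is bosonic. Then $\ell(\La^*)=\ell(\La^{\circledast})=\ell$, the first column consists of the squares $(1,1),\dots,(\ell,1)$, and all of these belong to $\mathcal{B}\La$ since a square in a bosonic column automatically lies in the bosonic content. Proposition~\ref{proprla} gives $r_\La(\alpha)=\prod_{i=1}^{\ell}h^\La_{(\alpha)}(i,1)$, which is precisely the contribution of the first column to $\prod_{(i,j)\in\mathcal{B}\La}h^\La_{(\alpha)}(i,j)$. It then suffices to show $v_{\C\La}(\alpha)=\prod_{(i,j)\in\mathcal{B}\La,\ j\geq 2}h^\La_{(\alpha)}(i,j)$, which follows from the inductive hypothesis once one notes that $\C$ merely shifts every cell of column $\geq 2$ one step to the left: the identities $l_{(\C\La)^*}(i,j-1)=l_{\La^*}(i,j)$ and $a_{(\C\La)^{\circledast}}(i,j-1)=a_{\La^{\circledast}}(i,j)$ (the second because $(\C\La)^{\circledast}_i=\La^{\circledast}_i-1$) give $h^{\C\La}_{(\alpha)}(i,j-1)=h^\La_{(\alpha)}(i,j)$, while this shift preserves the square/circle status of a cell as well as the fermionic character of its row and of its column, hence also membership in the bosonic content. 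Multiplying the two products yields $v_\La(\alpha)=r_\La(\alpha)v_{\C\La}(\alpha)=\prod_{(i,j)\in\mathcal{B}\La}h^\La_{(\alpha)}(i,j)$.

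Suppose instead the first column of $\La$ is fermionic. Then $\La$ has a unique part $0$, its diagram has a unique circle-only bottom row, say row $\ell$, and $\ell(\La^*)=\ell-1$. The operation $\widetilde{\C}$ erases that bottom circle; it leaves $\La^*$ unchanged and deletes the last part (a $1$) of $\La^{\circledast}$, so $h^{\widetilde{\C}\La}_{(\alpha)}(s)=h^\La_{(\alpha)}(s)$ for every square $s$ of $\La^*=(\widetilde{\C}\La)^*$, and the fermionic rows and fermionic columns of $\widetilde{\C}\La$ are those of $\La$, except that column $1$ is fermionic in $\La$ but bosonic in $\widetilde{\C}\La$. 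Consequently $\mathcal{B}(\widetilde{\C}\La)=\{(i,1):1\leq i\leq\ell-1\}\cup\{(i,j)\in\mathcal{B}\La:j\geq 2\}$, whereas $\mathcal{B}\La$ meets the first column exactly in the squares $(i,1)$, $1\leq i\leq\ell-1$, whose row is bosonic. Proposition~\ref{proprlat} gives $\tilde{r}_\La(\alpha)=\prod_{(i,1)\in\mathrm{fr}(\La)}h^\La_{(\alpha)}(i,1)^{-1}$; since $h^\La_{(\alpha)}(\ell,1)=1$ for the circle-only row, this product runs effectively over the fermionic rows with $i\leq\ell-1$, i.e. over the first-column squares of $\La$ that are \emph{not} in $\mathcal{B}\La$. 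Hence $\tilde{r}_\La(\alpha)\prod_{i=1}^{\ell-1}h^\La_{(\alpha)}(i,1)=\prod_{(i,1)\in\mathcal{B}\La}h^\La_{(\alpha)}(i,1)$, and combining this with $v_{\widetilde{\C}\La}(\alpha)=\big(\prod_{i=1}^{\ell-1}h^\La_{(\alpha)}(i,1)\big)\prod_{(i,j)\in\mathcal{B}\La,\ j\geq 2}h^\La_{(\alpha)}(i,j)$ (the inductive hypothesis together with $h^{\widetilde{\C}\La}_{(\alpha)}=h^\La_{(\alpha)}$ on squares) gives $v_\La(\alpha)=\tilde{r}_\La(\alpha)v_{\widetilde{\C}\La}(\alpha)=\prod_{(i,j)\in\mathcal{B}\La}h^\La_{(\alpha)}(i,j)$.

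The only delicate point is the combinatorial bookkeeping in the fermionic case: one must carefully track how the bosonic content and the hook-lengths change when $\widetilde{\C}$ converts the fermionic first column of $\La$ into a bosonic column, and check that the circle-only bottom row contributes the trivial factor $h^\La_{(\alpha)}(\ell,1)=1$, so that the fermionic-row product of Proposition~\ref{proprlat} matches exactly the set of first-column cells excluded from $\mathcal{B}\La$. The bosonic case and the passage to the recursions are routine consequences of Propositions~\ref{PropFactoI}, \ref{PropFactoII}, \ref{proprla}, and \ref{proprlat}.
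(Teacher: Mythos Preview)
Your proof is correct and follows essentially the same approach as the paper: induction on $|\La^{\circledast}|$, splitting into the bosonic/fermionic first-column cases, invoking the recursions $v_\La=r_\La v_{\C\La}$ and $v_\La=\tilde r_\La v_{\widetilde{\C}\La}$ derived from Propositions~\ref{PropFactoI}--\ref{PropFactoII}, and then applying Propositions~\ref{proprla} and \ref{proprlat}. You simply spell out in more detail the combinatorial bookkeeping (how $\mathcal{B}\La$, the hook-lengths, and the first-column contributions transform under $\C$ and $\widetilde{\C}$) that the paper leaves implicit in the final displayed equalities of its proof.
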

\begin{proof}  We proceed by induction on the size of $\Lambda^{\circledast}$.
If $\Lambda$ is the empty partition,
then $J_{\emptyset}=1$ and the result holds since $v_{\emptyset}=1$.
If the first column of $\Lambda$ is bosonic, then
from \eqref{eqdefr}, Proposition~\ref{PropFactoI}
 and Definition \ref{defJackJ}, we have
\begin{equation}v_\La(\alpha)=r_\La(\alpha) \,  v_{\C\La}(\alpha) \, .
\end{equation}
Using Proposition~\ref{proprla} and setting $\ell=\ell(\lambda)$,
we thus have by induction that
\begin{equation}v_\La(\alpha)
=\prod_{i=1}^\ell h^\La_{(\alpha)}(i,1)
\prod_{(i,j)\in\mathcal{B}(\C\La)}h^{\C\La}_{(\alpha)}(i,j) =
\prod_{(i,j)\in\mathcal{B}\La}h^\La_{(\alpha)}(i,j) \, ,
\end{equation}
and the result holds in that case.
If the first column of $\La$ is fermionic, we have from
 \eqref{eqdefrtilde},
Proposition~\ref{PropFactoII} and Definition \ref{defJackJ} that
\begin{equation}v_\La(\alpha)=\tilde{r}_\La (\alpha) \,
 v_{ \tilde{\C}\La}(\alpha) \, .
\end{equation}
Using Proposition~\ref{proprlat}, we then have by induction that
\begin{equation}v_\La(\alpha)=\prod_{(i,1)\in{\mathrm{fr}}(\La)}
\frac{1}{h^\La_{(\alpha)}(i,1)}
\prod_{(i,j)\in\mathcal{B}(\tilde{\C}\La)}h^{\tilde{\C}\La}_{(\alpha)}(i,j) =
\prod_{(i,j)\in\mathcal{B}\La}h^\La_{(\alpha)}(i,j) \, ,
\end{equation}
which proves the theorem.
\end{proof}

To illustrate the last {formula}, we consider the superpartition
\begin{equation}
  \La=(4,2,0;2)=\tableau[scY]{&&&&\bl\tcercle{}\\& &\bl\tcercle{}\\ &\\ \bl\tcercle{} }
\end{equation}
The bosonic content of $\La$ and its associated upper hook-lengths are given by
\begin{equation} {\small{\tableau[mcY]{\bl &{\mbox{\tiny $3\!+\!3\alpha$}} &\bl &
{\mbox{\tiny $1\!+\!\alpha$}}
\\\bl & {\mbox{\tiny
$2 \!+\!\alpha$}} \\ {\mbox{\tiny$1\!+\!\alpha$}}&{\mbox{\tiny 1}} }}}
\end{equation}
From this we conclude that
$v_{(4,2,0;2)} =(3+3\alpha)(2+\alpha)(1+\alpha)^2$.

\begin{theorem}For any superpartition $\La$,
 \begin{equation}\|J_\La\|^2:=\LL\olw{J_\La}|\orw{J_\La}\RR={\alpha^{\underline{\overline{\La}}}}\prod_{s\in\mathcal{B}\La}h^\La_{(\alpha)}(s) \,h^{(\alpha)}_\La(s).\end{equation}Furthermore,
 \begin{equation}\|P_\La\|^2:=\LL\olw{P_\La}|\orw{P_\La}\RR
={\alpha^{\underline{\overline{\La}}}}\prod_{s\in\mathcal{B}\La} \frac{h^{(\alpha)}_\La(s)}{h^\La_{(\alpha)}(s)}
={\alpha^{\underline{\overline{\La}}}}\, \prod_{s \in \La} \frac{h^{(\alpha)}_\La(s)}{h^\La_{(\alpha)}(s)} \, .\end{equation}
\end{theorem}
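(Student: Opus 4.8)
The plan is to first establish the formula for $\|J_\La\|^2$ and then read off the two expressions for $\|P_\La\|^2$. Since $J_\La=v_\La(\alpha)P_\La$ we have $\|J_\La\|^2=v_\La(\alpha)^2\,\|P_\La\|^2$, so the first expression for $\|P_\La\|^2$ follows from the $J$-norm by dividing and invoking the preceding theorem $v_\La(\alpha)=\prod_{s\in\mathcal{B}\La}h^\La_{(\alpha)}(s)$. The equality with $\prod_{s\in\La}$ then requires only the observation that a square $s=(i,j)$ lying simultaneously in a fermionic row and a fermionic column contributes trivially to the ratio: the circles terminating row $i$ and column $j$ of $\La^\circledast$ force $\La^\circledast_i=\La^*_i+1$ and $(\La^\circledast)'_j=(\La^*)'_j+1$, hence $a_{\La^\circledast}(s)=a_{\La^*}(s)+1$ and $l_{\La^\circledast}(s)=l_{\La^*}(s)+1$, and substituting into Definition~\ref{defHook} gives $h^{(\alpha)}_\La(s)=h^\La_{(\alpha)}(s)$; since $\La^*\setminus\mathcal{B}\La$ is exactly the set of these cells, $\prod_{s\in\mathcal{B}\La}$ and $\prod_{s\in\La}$ of $h^{(\alpha)}_\La(s)/h^\La_{(\alpha)}(s)$ agree. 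It remains to prove $\|J_\La\|^2=\alpha^{\underline{\overline{\La}}}\prod_{s\in\mathcal{B}\La}h^\La_{(\alpha)}(s)\,h^{(\alpha)}_\La(s)$.

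For this I would follow Stanley's route, combining the duality with the evaluation formulas. First, the duality $\hat\omega_\alpha(\orw{P_\La})=\|P_\La\|^2\,\olw{P_{\La'}^{(1/\alpha)}}$, together with $\hat\omega_\alpha\circ\hat\omega_{1/\alpha}=\mathrm{id}$ and the fact that $\hat\omega_\alpha$ exchanges the scalar products for $\alpha$ and $1/\alpha$, yields $\|P_\La\|^2\cdot\|P_{\La'}^{(1/\alpha)}\|^2=1$; in view of Lemma~\ref{lemmahook}, its companion $h^{(\alpha)}_\La(i,j)=\alpha\,h^{\La'}_{(1/\alpha)}(j,i)$ (proved the same way from $l_\lambda(i,j)=a_{\lambda'}(j,i)$), and $v_\La(\alpha)=\prod_{\mathcal{B}\La}h^\La_{(\alpha)}(s)$, the claimed right-hand side is invariant under $(\La,\alpha)\mapsto(\La',1/\alpha)$, so it is compatible with this relation and it suffices to pin down $\|J_\La\|^2$ itself. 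To do so I would feed Theorem~\ref{TheoSpecialI} (equivalently Theorem~\ref{mainT}) into the Cauchy formula: applying the evaluation homomorphism $\mathcal{E}_{X,Y}^M$ of Proposition~\ref{propevalhomo} to the $(y,\phi)$-variables of $\prod_{i,j}(1-x_iy_j-\theta_i\phi_j)^{-1/\alpha}=\sum_\La\|P_\La\|^{-2}\,\olw{P_\La}(x,\theta)\,\orw{P_\La}(y,\phi)$ and extracting the coefficient of $\phi_1\cdots\phi_m$ turns the right-hand side into $\sum_{\underline{\overline{\La}}=m}\frac{b_\La^{(\alpha,N)}}{\|P_\La\|^2\,v_\La(\alpha)}\,\olw{P_\La}(x,\theta)$ (taking $X=N$, $Y=M=m$) while the left-hand side becomes the explicit kernel $\prod_i(1-x_i)^{-N/\alpha}$ times a polynomial in the $\tilde p_k(x,\theta)$ with coefficients depending only on $N$ and $\alpha$. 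Evaluating once more in the $(x,\theta)$-variables produces a polynomial identity in $N$ whose right-hand sum, by Propositions~\ref{PropgI}--\ref{PropgII} as in the proof of Theorem~\ref{TheoSpecialI}, is supported on a fixed superpartition together with others higher in the dominance order; inducting on dominance isolates $\|J_\La\|^2$ and forces it to be the monic product of the linear factors $h^\La_{(\alpha)}(s)$ and $h^{(\alpha)}_\La(s)$ over $s\in\mathcal{B}\La$, up to an overall power of $\alpha$.

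That remaining power of $\alpha$ is fixed by anchoring the induction: for the single fermionic row $\La=\tilde n$ the polynomial $J_{\tilde n}$ is explicit (Lemma~\ref{LemmaJn} and the generating function it rests on), and a direct computation from the power-sum scalar product \eqref{defscalprodcomb} gives $\|J_{\tilde n}\|^2=n!\,\alpha^{\,n+1}\prod_{j=1}^{n}(1+j\alpha)$, matching the claimed formula since $\mathcal{B}\tilde n$ is the full row; and for $\La=\La_{\mathrm{min}}$ one has $P_{\La_{\mathrm{min}}}=m_{\La_{\mathrm{min}}}$ (nothing lies below it), so $\|P_{\La_{\mathrm{min}}}\|^2=\LL\olw{m_{\La_{\mathrm{min}}}}|\orw{m_{\La_{\mathrm{min}}}}\RR$ is again computed directly, and comparison with $v_{\La_{\mathrm{min}}}(\alpha)=\ell_{n,m}!$ and the hook product yields the factor $\alpha^{\underline{\overline{\La}}}$. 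The main obstacle is the term-by-term extraction in the second paragraph: organizing the doubly-evaluated Cauchy identity so that the individual hook-length factors of $\|J_\La\|^2$ can be read off one $\La$ at a time. This is the superspace counterpart of the most delicate step in Stanley's derivation of the Jack norm, and the possibility of moving circles in the relevant skew diagrams — absent in the classical case — is what makes it more involved here; the conjugation symmetry noted above helps by halving the case analysis.
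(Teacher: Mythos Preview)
Your treatment of the peripheral points is correct and matches the paper: deducing $\|P_\La\|^2$ from $\|J_\La\|^2$ via $J_\La=v_\La(\alpha)P_\La$, and showing that the product over $\mathcal{B}\La$ can be replaced by the product over $\La$ because $h^{(\alpha)}_\La(s)=h^\La_{(\alpha)}(s)$ on the fermionic content. The paper does exactly this.

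Where you diverge is in the heart of the argument, and there you have missed the short route. The paper does not re-evaluate the Cauchy kernel and induct on dominance; instead it invokes a ready-made consequence of duality already established in \cite[Prop.~31]{DLMadv}:
\[
\|P_\La\|^2=\alpha^{\,m+\ell_{n,m}}\,\frac{v_{\La'}(1/\alpha)}{v_\La(\alpha)}\,,
\qquad \ell_{n,m}=n-\binom{m}{2}.
\]
With this in hand the whole computation is two substitutions. The preceding theorem gives $v_\La(\alpha)=\prod_{\mathcal{B}\La}h^\La_{(\alpha)}$, and Lemma~\ref{lemmahook} together with $(\mathcal{B}\La)'=\mathcal{B}(\La')$ gives
\[
v_{\La'}(1/\alpha)=\prod_{\mathcal{B}(\La')}h^{\La'}_{(1/\alpha)}(i,j)
=\alpha^{-|\mathcal{B}\La|}\prod_{\mathcal{B}\La}h^{(\alpha)}_\La(j,i)
=\alpha^{-n+\binom{m}{2}}\prod_{\mathcal{B}\La}h^{(\alpha)}_\La.
\]
Multiplying by $\alpha^{m+\ell_{n,m}}$ produces the $\alpha^{\underline{\overline{\La}}}$ and both norm formulas drop out immediately. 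All the analytic work (the evaluation theorems) has already been absorbed into the formula for $v_\La(\alpha)$; nothing further is needed.

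Your proposed alternative --- evaluating the Cauchy identity in $(y,\phi)$, then again in $(x,\theta)$, and inducting on dominance to peel off individual hook factors --- is not wrong in spirit, but as you yourself note, the ``term-by-term extraction'' is not actually carried out, and making it rigorous in superspace is at least as hard as the evaluation theorems you would be reusing. You also only extract from duality the symmetric relation $\|P_\La\|^2\cdot\|P^{(1/\alpha)}_{\La'}\|^2=1$, which is a consistency check but not a determination. The sharper consequence of duality that you want is precisely the displayed formula above, which already pins the norm down once $v_\La$ is known.
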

\begin{proof}Set $n=|\La|$ and $m=\underline{\overline{\La}}$.
According to Proposition 31 in \cite{DLMadv}, which is a consequence of the duality property of Jack polynomials given in  \eqref{PropgI}, we have
\begin{equation}\label{EqNormP}\LL\olw{P_\La}|\orw{P_\La}\RR=\alpha^{m+\ell_{n,m}}\frac{v_{\La'}(1/\alpha)}{v_\La(\alpha)}.\end{equation}
But {since} $\orw{J_\La}=v_\La\,\orw{P_\La}$,
{it readily follows that}
\begin{equation}\label{EqNormJ}\LL\olw{J_\La}|\orw{J_\La}\RR=\alpha^{m+\ell_{n,m}}{v_{\La'}(1/\alpha)}{v_\La(\alpha)}.\end{equation}Now, exploiting Lemma \ref{lemmahook} and the obvious property $(\mathcal{B}\La)'=\mathcal{B}(\La')$, we get
\begin{multline}v_{\La'}(1/\alpha)=\prod_{(i,j)\in \mathcal{B}(\La')}h^{\La'}_{(1/\alpha)}(i,j)
=\prod_{(i,j)\in( \mathcal{B}\La)'}\frac{1}{\alpha}h^{(\alpha)}_\La(j,i)=\frac{1}{\alpha^{n-\binom{m}2}}\prod_{(i,j)\in { \B\La}}h^{(\alpha)}_\La(i,j)\qquad\end{multline}
The above expressions for $\|J_\La\|^2$ and $\|P_\La\|^2$ follow by  substituting  the latter equation into \eqref{EqNormJ} and \eqref{EqNormP} respectively
and using $\ell_{n,m}=n-m(m-1)/2$.
That $\mathcal{B}\La$ can be replaced by $\La $ in the expression of $\|P_\La\|^2$ follows from the identity
\begin{equation}h_\La^{(\alpha)}(s)=l_{\La^\circledast}(s)+\alpha(a_{\La^*}(s)+1)=l_{\La^*}(s)+1+\alpha a_{\La^\circledast}(s)=h^\La_{(\alpha)}(s)
\end{equation}
whenever $s$ belongs to both a fermionic row and a fermionic column (i.e., $s\in\La/\mathcal{B}\La=\mathcal{F}\La$). \end{proof}

 \begin{appendix}

\section{Proofs of Propositions~\ref{PropgI} and \ref{PropgII}}
\label{appenB}

The proofs of Propositions~\ref{PropgI} and \ref{PropgII}
 rely on the relation between
 Jack polynomials in superspace
and non-symmetric Jack polynomials presented in \cite[Sect. 9]{DLMcmp2}.

The non-symmetric Jack polynomials, $E_{\eta}(x;\alpha)$, are indexed
by compositions $\eta \in \mathbb Z^N_{\geq 0}$ with $N$ parts
(some of them possibly equal to zero). They were first studied systematically
 in \cite{Op} although
they had appeared earlier in the physics litterature
as eigenfunctions of the
commuting Dunkl-type operators \cite{Ber}
\begin{equation}
{\mathcal D}_i = \alpha x_i \frac{\partial}{\partial x_i} + \sum_{k<i} \frac{x_i}{x_i-x_k} (1-K_{i,k}) + \sum_{i<k \leq N} \frac{x_k}{x_i-x_k} (1-K_{i,k}) + 1-i \, ,
\end{equation}
where $K_{i,k}$ is the operator that exchanges the variables $x_i$ and $x_k$.
The non-symmetric Jack polynomial $E_{\eta}(x;\alpha)$
can be characterized as the unique
polynomial, whose coefficient of
$x^{\eta}$ is equal to 1, such that
\begin{equation}
{\mathcal D}_i \, E_{\eta}(x;\alpha) =  \bar \eta_i \, E_{\eta}(x;\alpha) \qquad
 \forall i=1,\dots,N,
\end{equation}
where the eigenvalue $\bar \eta_i$
is given by
\begin{equation}
\bar \eta_i = \alpha \eta_i - \#\{ k < i \, | \, \eta_k \geq \eta_i \} -
 \#\{ k > i \, | \, \eta_k > \eta_i \}.
\end{equation}
The following properties of non-symmetric Jack polynomials
\cite{Knop} will prove to be important:
\begin{equation}
K_{i,i+1} \, E_{\eta}(x;\alpha) = E_{\eta}(x;\alpha) \qquad {\rm if} \quad
\eta_i=\eta_{i+1}  ,
\end{equation}
and
\begin{equation} \label{symnonsym}
\left[ K_{i,i+1} +\frac{1}{(\bar \eta_i - \bar \eta_{i+1})} \right]
 E_{\eta'}(x;\alpha)
=  E_{\eta}(x;\alpha)
 \qquad {\rm if} \quad
\eta_i>\eta_{i+1}  ,
\end{equation}
where $\eta'=(\eta_1,\dots,\eta_{i-1},\eta_{i+1},\eta_i,\eta_{i+2},
\dots,\eta_N)$.

For our purposes it will be convenient to associate a diagram to
$\eta$ given by the set of cells in $\mathbb Z^2_{\geq 1}$ such that
$1 \leq i \leq N$ and $1 \leq j \leq \eta_i$.  For instance,
if $\eta=(0,1,3,0,0,6,2,5)$, the diagram of $\eta$ is
\begin{equation}
{\tableau[scY]
{\bl \bullet \\  \\ && \\ \bl \bullet \\ \bl \bullet \\  & & & & &  \\
 & \\
& & & &  \\ }}
\end{equation}
where a $\bullet$ represents an entry of length zero.

Suppose $i_1,\dots,i_p$ are distinct integers between 1 and $N$.
It is known \cite{McAnally} that the
non-symmetric Jack polynomials satisfy the following
Pieri-type expansion
\begin{equation} \label{pierinonsym}
x_{i_1} \dots x_{i_p}  \, E_{\eta}(x;\alpha) = \sum_{\nu \in {\mathbb J}_{N,p}} c_{\eta \nu}
 \, E_{\nu}(x;\alpha),
\end{equation}
for certain coefficients $c_{\eta \nu}$.
The set ${\mathbb J}_{N,p}$ is most easily described in terms of
the diagram of $\eta$.  A cell is
first added to each of the $p$
rows $i_1,\dots, i_p$ of $\eta$ to form a new diagram.
Then ${\mathbb J}_{N,p}$ consists of all the
rearrangements of the rows of the new diagram
such that the rows with a cell added can only move downwards or stay
stationnary, while the remaining rows can only move upwards or stay
stationnary.  For instance, if $p=2$, $i_1=2$, $i_2=3$ and $\eta=(3,1,3,0)$
we have that ${\mathbb J}_{N,p}$ consists of the following diagrams
\begin{equation}
{\small{\tableau[scY]
{&& \\  & \tf\\ &&& \tf \\ \bl \bullet  }}}
\qquad  \qquad
{\small{\tableau[scY]
{&& \\  & \tf\\ \bl \bullet \\  &&& \tf  }}}
\qquad \qquad
{\small{\tableau[scY]
{&& \\ \bl \bullet \\ &&& \tf \\    & \tf }}}
\qquad \qquad
{\small{\tableau[scY]
{&& \\ \bl \bullet \\ & \tf \\    &&& \tf }}}
\end{equation}
where the cells with thick frames correspond to the rows with a cell added.

 Given a superpartition $\Lambda=
(\Lambda_1,\dots,\Lambda_m;\Lambda_{m+1},\dots,\Lambda_{N})$, define
$\tilde \Lambda$ to be the composition
\begin{equation}
\tilde \Lambda := (\Lambda_m,\dots,\Lambda_1,\Lambda_N,\dots,\Lambda_{m+1}) \, .
\end{equation}
It was established in \cite[Eq. 107 and Theo. 41]{DLMcmp2} that the Jack polynomials
in superspace can be obtained from the
non-symmetric
Jack polynomials through the following relation:
\begin{equation} \label{jackinnonsym}
{P}_{\Lambda} = \frac{(-1)^{m(m-1)/2}}{n_{\La}!}
\sum_{w \in S_N} {\mathcal K}_{w} \, \theta_1 \cdots \theta_m \,
E_{\tilde \Lambda}(x;\alpha) \, ,
\end{equation}
where we recall that $n_{\La}!$ is defined in \eqref{eqmono}, and where
${\mathcal K}_{w}$ is such that
\begin{equation}{\mathcal K}_{w} f(x_1,\dots,x_N;\theta_1,\dots,\theta_N)=
 f(x_{w(1)},\dots,x_{w(N)};\theta_{w(1)},\dots,\theta_{w(N)})
\end{equation}
on any polynomial $f(x_1,\dots,x_N;\theta_1,\dots,\theta_N)$ in $x$ and
$\theta$.

Note that the composition $\tilde \Lambda$ is of a very special form.  Its
first $m$ rows (resp. last $N-m$ rows)
are strictly increasing (resp. weakly increasing).
Diagrammatically, it is made of two partitions (the first of
which having no
repeated parts)
drawn in the French
notation (largest row at the bottom).
For instance if $\Lambda=(3,1,0;5,3,3,0,0)$, we have
$\tilde \Lambda=(0,1,3,0,0,3,3,5)$ whose diagram is given by
\begin{equation}
{\tableau[scY]
{\bl \bullet \\  \\ && \\ \bl \bullet \\ \bl \bullet \\  & &  \\
 & & \\
& & & &  \\ }}
\end{equation}
The first $m$ rows (resp. last $N-m$ rows)
of $\tilde \Lambda$ will be said to be fermionic (resp. bosonic).

We can now proceed to the proof of Propositions~\ref{PropgI}
and \ref{PropgII}.

\begin{proof}[Proof of Proposition~\ref{PropgI}]
   Let
\begin{equation}
 \mathcal O_{sym} =  \sum_{w \in S_N} {\mathcal K}_{w}  \, \theta_1 \cdots \theta_m.
\end{equation}
It is easy to see that if $f \in \mathbb Q[x_1,\dots,x_N](\alpha)$ then
$\mathcal O_{sym} \, K_{i,i+1} \, f=
-\mathcal O_{sym} \, f$
if $i=1,\dots,m-1$ and $\mathcal O_{sym} \, K_{i,i+1} \, f= \mathcal O_{sym} \, f$
if $i=m+1,\dots,N-1$.  Using \eqref{symnonsym},
we can thus deduce that
\begin{equation} \label{EqsymP}
P_{\La} \propto \mathcal O_{sym} \, E_{\eta} ,
\end{equation}
whenever the fermionic rows of $\eta$ (its first $m$
entries) are a rearrangement of $\La_1,\dots,\La_m$ and
its bosonic rows  (its last $N-m$ entries)
are a rearrangement
of $\La_{m+1},\dots,\La_N$.

We will now use the Pieri-type rule given in
\eqref{pierinonsym} to show that the expansion
\begin{equation}
e_n \, P_{\La} = \sum_{\Om} g_{\La, (1^n)}^{\Om} \, P_{\Om}
\end{equation}
is such that the
coefficient $g_{\La, (1^n)}^{\Om}$ is non-zero only if
$\Om/\La$ is a vertical $n$-strip.
We have
\begin{equation} \label{EqenP}
e_n \, P_{\La} \propto
\sum_{i_1<\dots<i_n} x_{i_1} \cdots x_{i_n}  \mathcal O_{sym} \, E_{\tilde \La}
= \mathcal O_{sym} \sum_{i_1<\dots<i_n} x_{i_1} \cdots x_{i_n}  \, E_{\tilde \La} , ,
\end{equation}
where $\sum_{i_1<\dots<i_n} x_{i_1} \cdots x_{i_n}$ commutes with $\mathcal O_{sym}$
since it is a symmetric function in $x_1,\dots,x_N$.
Given a composition $\eta$ whose first $m$ entries are all distinct,
let $\Om_{\eta}=(\Om_\eta^a;\Om_\eta^s)$ be the superpartition such that
$\Om_\eta^a$ and $\Om_\eta^s$ are obtained respectively
by rearranging
the first $m$ entries of $\eta$ and the last $N-m$ entries of
$\eta$.  We thus simply need to show that the compositions $\eta$ such that
$E_{\eta}$ appear in $x_{i_1} \cdots x_{i_n}  \, E_{\tilde \La}$ are such
that $\Omega_{\eta}/\La$ is a vertical $n$-strip.

We know from the rule given after
\eqref{pierinonsym} that $\eta$ is obtained from $\tilde \La$
by adding $n$ cells in distinct rows and then rearranging the rows.
It is thus clear that $\Om_\eta^*/\La^*$ is a vertical $n$-strip.
Suppose that $\omega$ is obtained from $\tilde \La$ by adding $n$ cells
in distinct rows.
It is easily seen that $\Om_{\omega}^{\circledast}/\La^{\circledast}$ is a vertical $n$-strip
in that case.
We will now see that if $\eta$ is obtained by rearranging the rows of $\omega$
then $\Om_{\eta}^{\circledast}/\La^{\circledast}$ is still
a vertical $n$-strip.  First observe from \eqref{EqsymP}
that
the only rearrangments that matter are those
that send a fermionic (resp. bosonic) row into a bosonic (resp. fermionic) row.
Since fermionic rows lie above bosonic ones, for a bosonic row of $\omega$ to become fermionic, the rule given after
\eqref{pierinonsym} imposes that
the size of that row needs to be the same in $\omega$ and in $\tilde \La$.  Therefore this new
fermionic row differs from the old bosonic row of $\tilde \La$ only by a circle, and thus when
comparing the the two rows in $\Om_{\eta}^{\circledast}/\La^{\circledast}$ we get a difference of one.
The rule given after
\eqref{pierinonsym} imposes similarly that
for
a fermionic row of $\omega$ to become bosonic, the size of that row needs
to be larger (by one) in $\omega$ than in $\tilde \La$.  Hence, when
comparing the two rows in $\Om_{\eta}^{\circledast}/\La^{\circledast}$ we get that
they are of the same size (a circle was lost while a square was gained).
We thus have that $\Om_{\eta}^{\circledast} \subseteq \La^{\circledast}$
with
the length of the
rows of $\Om_{\eta}^{\circledast}$ and $\La^{\circledast}$ never differing
by more than one.  Consequently, $\Om_{\eta}^{\circledast}/\La^{\circledast}$
is a vertical $n$-strip.
\end{proof}

\begin{proof}[Proof of Proposition~\ref{PropgII}]
We will prove the equivalent statement that
\begin{equation}
\tilde e_n \, P_{\La} = \sum_{\Om} g_{\La, (0;1^n)}^{\Om} \, P_{\Om}
\end{equation}
is such that the
coefficient $g_{\La, (0;1^n)}^{\Om}$ is non-zero only if
$\Om/\La$ is a vertical $\tilde n$-strip.

We use again the Pieri-type formula for non-symmetric Jack polynomials to get
\begin{equation}
\tilde e_n \, P_{\La} \propto
\sum_{\substack{i, i_1<\dots<i_n\\ i\not\in\{i_1,\cdots, i_n\}}}  \theta_i
\, x_{i_1} \cdots x_{i_n} \mathcal O_{sym} \, E_{\tilde \La}
= \mathcal O_{sym} \sum_{\substack{i, i_1<\dots<i_n\\ i\not\in\{i_1,\cdots, i_n\}}} \theta_i x_{i_1} \cdots x_{i_n}  \, E_{\tilde \La} .
\end{equation}
Following the argument given after \eqref{EqenP}, it is immediate that
we have again that if $\Omega$ appears in $\tilde e_n \, P_{\La}$ then $\Om^*/\La^*$ is a vertical $n$-strip.
It remains to show that  the terms $\Omega$ occurring
in $\tilde e_n \, P_{\La}$ are such
that $\Omega^{\circledast}/\La^{\circledast}$ is a vertical $(n+1)$-strip.  This is somewhat more subtle and we
will use a different route to obtain the result.

Let $d= \sum_{i=1}^{\infty} \theta_i \partial_{x_i} $ and $d^{\perp}=
\sum_{i=1}^{\infty} x_i \partial_{\theta_i}$.  The operators $d$ and $d^{\perp}$ are adjoint of each
other
with respect to the scalar product  \eqref{defscalprodcomb},
namely:
\begin{equation}
\LL d^{\perp} \, f\, | g \, \RR = \LL f \, | \, d
\, g\RR,
\end{equation}
for every symmetric functions in superspace $f$ and $g$.
It is easy to see that the expansion
\begin{equation} \label{equad}
d \, m_{\La}= \sum_{\Om} u_{\La \Om}\, m_{\Om}
\end{equation}
is such that $u_{\La \Om} \neq 0$ only if $\Om^{\circledast}=\La^{\circledast}$ ($\theta_i \partial_{x_i}$
removes a square from a bosonic row and changes the row into a fermionic one).
Similarly,
\begin{equation} \label{equadd}
d^{\perp} \, m_{\La}= \sum_{\Om} \tilde{u}_{\La \Om}\, m_{\Om}
\end{equation}
is such that $\tilde{u}_{\La \Om} \neq 0$ only if $\Om^{\circledast}=\La^{\circledast}$ ($x_i \partial_{\theta_i}$
adds a square to a fermionic row and changes the row into a bosonic one).

By \eqref{Ptriangular}, we have that
$P_{\La} = \sum_{\Om \leq \La} c_{\La \Om} \, m_{\Om}$.  By
definition of the dominance order on superpartions, we get in
particular that $m_{\Omega}$ occurs in $P_{\La}$ only if
$\Om^{\circledast} \leq \La^{\circledast}$.
Thus, from \eqref{equad} and \eqref{equadd}, we have
\begin{equation} \label{dsurLa}
d\, P_{\La} =  \sum_{\Om^{\circledast} \leq \La^{\circledast}} b_{\La \Om} \, P_{\Om} \qquad {\rm and} \qquad
d^{\perp}\, P_{\La} =  \sum_{\Om^{\circledast} \leq \La^{\circledast}}
\tilde{b}_{\La \Om} \, P_{\Om}.
\end{equation}

Using the adjointness of $d$ and $d^{\perp}$ we have
\begin{equation}
\LL P_{\Gamma} \, | \,  d \, P_{\La}\RR
= \LL d^{\perp}  P_{\Gamma} \, | \, P_{\La}\RR .
\end{equation}
Then, from the orthogonality of the Jack polynomials in superspace,
we have from \eqref{dsurLa} that the left-hand
side
is zero unless $\Gamma^{\circledast} \leq \La^{\circledast}$
while the right-hand side is zero unless $\Gamma^{\circledast}
\geq \La^{\circledast}$.  Therefore the expressions are
zero unless $\Gamma^{\circledast} =
 \La^{\circledast}$, that is,
\begin{equation} \label{dsurLaII}
d \, P_{\La}= \sum_{\Om^{\circledast} = \La^{\circledast}} b_{\La \Om}\, P_{\Om}.
\end{equation}

Now, an easy computation gives that
$d \, e_{n+1} = \tilde e_n$.
Since $d$ is a derivative,  we have
\begin{equation}
d(e_{n+1} \, P_{\La}) =  \tilde e_n \, P_{\La} + e_{n+1} \, d (P_{\La}) ,
\end{equation}
and thus
\begin{equation}
\tilde e_n \, P_{\La} =  e_{n+1} \, d (P_{\La}) - d(e_{n+1} \, P_{\La}).
\end{equation}
Since, by \eqref{dsurLaII},
all the terms $\Gamma$ that occur in $d\, P_{\La}$ are such that
$\Gamma^{\circledast}=\La^{\circledast}$ we have by Proposition~\ref{PropgI}
(in the form demonstrated above)
that
all the terms $\Om$ that occur in $e_{n+1} \, d (P_{\La})$ are such that
$\Omega^{\circledast}/\La^{\circledast}$ is a vertical $(n+1)$-strip.  Similarly,
all the terms $\Gamma$ that occur in $e_{n+1} \, P_{\La}$ are such that
$\Gamma^{\circledast}/\La^{\circledast}$ is a vertical $(n+1)$-strip and thus
by \eqref{dsurLaII} all the terms $\Omega$ that occur in $d(e_{n+1} \, P_{\La})$ are such
that  $\Omega^{\circledast}/\La^{\circledast}$ is a vertical $(n+1)$-strip.
Therefore, all the terms $\Omega$ that occur in $\tilde e_n \, P_{\La}$ are such
that $\Omega^{\circledast}/\La^{\circledast}$ is a vertical $(n+1)$-strip.

\end{proof}

\section{Orderings on superpartitions and Jack polynomials in superspace}
\label{appen1}

Let $\leq$ and $\trianglelefteq$ refer respectively
to the orders on superpartitions defined in \eqref{eqdeforder1} and
\eqref{eqorder2}.
The Jack polynomials in superspace were
defined in \cite{DLMcmp2}
as in
Theorem~\ref{TheoEigenJack} but with the order
$\trianglelefteq$ instead of  $\leq$.
We will show in this appendix that the two orders
lead to the same family of Jack polynomials in
superspace.

The Jack polynomials are known \cite{DLMcmp2}
to be such that
\begin{equation}
{\mathcal I} \,
P_{\La} = \epsilon'_\Lambda \, P_{\La},\qquad \text{and}\qquad
P_{\La}=m_{\La} + \sum_{\Omega
  \triangleleft \La } c_{\La \Omega} \, m_{\Omega},\end{equation}
where $\alpha\epsilon'_\La=\epsilon_\La-m(m-1)/2$,
with $\epsilon_\La$ being defined in \eqref{epsi}.
Note that the relation between ${\mathcal I} $ and $\Delta$ is given
in \eqref{DDrel}. A crucial step in the
derivation of those results was to show that
\begin{equation}
{\mathcal I} \, m_{\La} =  \epsilon'_\Lambda \,  m_{\La}
+ \sum_{\Omega \triangleleft \La} d_{\La \Omega} \, m_{\Omega} \, .
\end{equation}
Our main task here is to prove the stronger statement
(given that $\Gamma \leq \Lambda$ implies
$\Gamma \trianglelefteq \Lambda$)
\begin{equation}\label{eqI1}
{\mathcal I} \, m_{\La} =  \epsilon'_\Lambda \,  m_{\La}
+ \sum_{\Omega < \La} b_{\La \Omega} \, m_{\Omega},
\end{equation}
where we emphasize that the order in the sum is now the order $\leq$.
Since $\epsilon'_{\Gamma} \neq \epsilon'_{\La}$ if
$\Gamma \triangleleft \La$ (see \cite{DLMcmp2}),
 \eqref{eqI1} ensures that  $P_{\La}=m_{\La} + \sum_{\Omega
  < \La } c_{\La \Omega} \, m_{\Omega}$, which is the result we are
trying to establish.
Suppose otherwise that there exists a  $c_{\La \Gamma} \neq 0$
with $\Gamma \triangleleft
\Lambda$ and $\Gamma \not < \Lambda$.
Pick $\Gamma$ to be such that there is
no  $c_{\La \Omega} \neq 0$ with $\Omega \triangleleft
\Lambda$, $\Omega \not < \Lambda$ and $\Omega > \Gamma$.
Then we get
the contradiction that the coefficient of $c_{\La \Gamma} \, m_{\Gamma}$
in ${\mathcal I} \,
P_{\La}$ is $\epsilon'_{\Gamma} \neq \epsilon'_{\La}$.

We now prove \eqref{eqI1}.  We only need to show that there does not exist a  $\Gamma$
such that $b_{\La \Gamma} \neq 0$ with
$\Gamma^{\circledast} \not < \La^{\circledast}$.  In \cite[Eq. 87]{DLMcmp2},
it is shown that the coefficient of $\theta_1 \cdots \theta_m$ in
${\mathcal I} \, m_{\La}$ is given, up to a factor, by\footnote{Minor
misprints in \cite[Eq. 87]{DLMcmp2}
 are corrected here.} (using $\beta=1/\alpha$)
 \begin{equation}
\left[ \sum_{i=1}^m \La_i -\beta m(m-1)  \right] x^{\La} +
\frac{\beta}{n_{\La}!} \sum_{w \in S_m} (-1)^{{\rm sgn}(w)} K_w \sum_{i=1}^m
\sum_{j=m+1}^N \frac{x_j}{(x_i-x_j)} (1-K_{ij}) x^{\La}  ,
\end{equation}
where $n_{\La}!$ is defined in \eqref{eqmono}.  Observe that a  term
$x^\eta$ in the resulting expression contributes to the coefficient of
$m_{\Gamma}$,
where $\Gamma=(\Gamma^a;\Gamma^s)$ is such that $\Gamma^a$ is the reordering of the
first $m$
entries of $\eta$ and $\Gamma^s$ is the reordering of the remaining
entries of $\eta$.  Since the $\eta$'s that can appear in the resulting expression
differ from $\La$ in at most two entries, it suffices to consider the
two-variable case.  Let $\Lambda=(a;b)$ (for the cases $(a,b;)$ and $(a,b)$, the conclusion is immediate).
We have
\begin{equation} \label{eqA3}
\frac{x_2}{(x_1-x_2)} (1-K_{12}) \, x_1^a x_2^b
=
\left \{
\begin{array}{l}
x_1^{a-1}x_2^{b+1}+x_1^{a-2}x_2^{b+2} +\cdots + x_1^{b-1}x_2^{a+1}+x_1^b
x_2^{a} \quad \text{if } a>b\\
\\
x_1^{b-1}x_2^{a+1}+x_1^{b-2}x_2^{a+2} +\cdots + x_1^{a-1}x_2^{b+1}+x_1^a
x_2^{b}  \quad \text{if } b>a.
\end{array}
\right.
\end{equation}
In the case where $a>b$ we have $\Lambda^{\circledast}=(a+1,b)$ and it is easy
to see that it is larger in the dominance order than every term of the form $(z+1,y)$
where $x_1^z x_2^y$ appears in \eqref{eqA3}.   Similarly,
in the case where $a<b$ we have $\Lambda^{\circledast}=(b+1,a)$ and we
see that it is larger or equal in the dominance order to every term of the form $(z+1,y)$
where $x_1^z x_2^y$ appears in \eqref{eqA3}.  This implies that in the
two-variable case 
every term $m_{\Gamma}$ present in the action of ${\mathcal
  I}$ on $m_{\La}$ is such that $\Gamma^{\circledast} \leq \La^{\circledast}$.
As previously mentioned, the general case follows immediately.

\section{Another combinatorial expression for the evaluation formula}
\label{appD}

The evaluation formula of
Theorem~\ref{TheoSpecialI} is expressed in terms of the
 skew diagram $\S \La$.  It is
possible to reexpress this evaluation formula directly in terms of the diagram of $\La$.
This alternative expression involves what we will call the
shadow of a cell in analogy with Viennot's shadow in \cite{Vie}.

The shadow of $s$ is made of all the cells weakly south-east of it, that is,
the cells in the shadow of $s=(i,j)$ are the cells $s'=(i',j')$ such that $i' \geq i$ and $j' \geq j$.
In the diagram of $\La$, we place a $\bullet$  in the $j$-th cell or circle of the $(m-j+1)$-th circled row (from top to bottom). We then define
\begin{align}
\#_\circ s =& \text {   the number of circles in the shadow of $s$} ,
\nonumber \\
\#_\bullet s=& \text {  the number of $\bullet$ is the shadow of $s$.}
\end{align}
For instance, for the superpartition $(5,3,1;2,2,2,1)$, the position of the $\bullet$ and the shadow of cell $s=(3,1)$ (indicated by $x$'s)
are as follows:
\begin{equation} {\tableau[scY]
{&  & \bullet &  & & \bl \tcercle{}\\
& \bullet & &\bl\tcercle{} \\
x& x & \bl x & \bl x & \bl x & \bl x & \bl x\\
x& x & \bl x & \bl x & \bl x & \bl x \\
x& x & \bl x & \bl x & \bl x
\\ \bullet & \bl\tcercle{} & \bl x & \bl x
\\ x & \bl x & \bl x \\
 \bl x & \bl x \\
 \bl x}}
\end{equation}
so that $\#_\circ(3,1)= 1$ and $\#_\bullet (3,1)= 1$.

Recall that the definitions of arm-colengths and leg-colengths
can be found in Section~\ref{sectintro}.
\begin{proposition} Let
\begin{equation}\label{spe}
\Sp_{N,m}\,[J_{\La}]\,= \prod_{s \in
\mathcal{B}\La }{f_\La(s)}
\end{equation}
with  $f_{\La}(s)$  given by
\begin{align}
f_\La(s)&= N-l_{\La^*}'(s)+\alpha(a_{\La^*}'(s)+\#_\circ s)\quad \,\text{if}\quad \La^{\circledast}_i-\La_i^*=1\\
&= N-l_{\La^*}'(s)-\#_\bullet s +\alpha\,
a_{\La^*}'(s)\phantom{()}\quad \text{if}\quad  \La^{\circledast}_i-\La_i^*=0
 \end{align}
 where $s=(i,j)$.
\end{proposition}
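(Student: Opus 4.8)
The plan is to read this off directly from Theorem~\ref{TheoSpecialI}, which already gives $\Sp_{N,m}[J_\La]=b_\La^{(\alpha,N)}=\prod_{(i,j)\in\S\La}\bigl(N-(i-1)+\alpha(j-1)\bigr)$, so that the whole content is the purely combinatorial identity $\prod_{(i,j)\in\S\La}\bigl(N-(i-1)+\alpha(j-1)\bigr)=\prod_{s\in\mathcal{B}\La}f_\La(s)$. First I would record the basic bookkeeping: the Pauli condition on the $m$ fermionic rows forces $\La^\circledast_k\ge m+1-k$ for $k\le m$, so $\S\La$ (and the placement of the $\bullet$'s) is well defined; and counting shows the fermionic content has $\sum_{k=1}^m(m-k+1)=m(m+1)/2$ cells, whence $|\mathcal{B}\La|=|\La^\circledast|-m(m+1)/2=|\La|-m(m-1)/2=|\S\La|$. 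The $m=0$ case degenerates to the $m=0$ case of Theorem~\ref{TheoSpecialI}, since then $\mathcal{B}\La=\La$ and both shadow counts vanish.

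Next I would exhibit an explicit bijection $\psi\colon\mathcal{B}\La\to\S\La$ matching the factors: for $s=(i,j)\in\mathcal{B}\La$ set $\psi(s)=(i,\,j+\#_\circ s)$ if row $i$ is fermionic and $\psi(s)=(i+\#_\bullet s,\,j)$ if row $i$ is bosonic. Since a square of $\La^*$ has $l'_{\La^*}(i,j)=i-1$ and $a'_{\La^*}(i,j)=j-1$, the factor $N-(i'-1)+\alpha(j'-1)$ attached to $\psi(s)=(i',j')$ in $\S\La$ is exactly $N-(i-1)+\alpha\bigl(j-1+\#_\circ s\bigr)=f_\La(s)$ in the fermionic case and $N-(i-1+\#_\bullet s)+\alpha(j-1)=f_\La(s)$ in the bosonic case. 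So once $\psi$ is shown to be a well-defined bijection onto $\S\La$, the identity follows by taking products; establishing that $\psi$ is a bijection is the heart of the matter and the main obstacle.

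For the bijectivity I would argue row- and column-locally. Fix a fermionic row $i_k$ (the $k$-th circled row from the top); its $\mathcal{B}\La$-cells occupy precisely the columns $c_1<\dots<c_p$ of row $i_k$ that do not end in a circle, and a short telescoping computation (comparing consecutive $c_t$) shows $\psi$ sends $c_{t+1}$ to exactly one column beyond the image of $c_t$, with $\psi$ placing $c_1$ in column $m-k+2$; hence $\psi$ maps $\mathcal{B}\La\cap(\text{row }i_k)$ bijectively onto the cells of row $i_k$ in columns $m-k+2,\dots,\La^\circledast_i$ — here the point that a $\mathcal{B}\La$-cell in a fermionic row sits in a \emph{bosonic} column is exactly what keeps $j+\#_\circ s$ inside row $i_k$, and $m-k+2>m+1-i_k$ (since $k\le i_k$) keeps it off the staircase, so the image lies in $\S\La$. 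For a bosonic row $\psi$ simply pushes each cell straight down within its column by $\#_\bullet s$; using the anti-staircase placement of the $\bullet$'s one checks that within each column $\psi$ strictly preserves the top-to-bottom order of the bosonic cells (the key point being that not every row strictly above a bosonic row is fermionic), so $\psi$ is injective there, and a direct count shows the images of the bosonic cells in a given column are disjoint from the images of the fermionic cells lying in that column. Combining these per-row/per-column statements with the cardinality identity $|\mathcal{B}\La|=|\S\La|$ forces $\psi$ to be onto, which completes the proof.
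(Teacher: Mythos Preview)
Your proposal is correct and follows essentially the same route as the paper: the paper's argument is precisely to exhibit the bijection $\psi$ you describe (displace $s$ to the right by $\#_\circ s$ in a fermionic row, downward by $\#_\bullet s$ in a bosonic row) and then assert that ``it is then straightforward to see that the two evaluations coincide.''  You supply more of the verification than the paper does; your per-row telescoping analysis on fermionic rows and per-column monotonicity on bosonic cells are exactly the checks needed, though the final disjointness step (bosonic images vs.\ fermionic images in a given column) deserves one more line rather than ``a direct count''.
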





For instance, if $\Lambda=(3,1,0;4,2,1)$, filling the cells
$s \in \mathcal{B}\La$ with
the values $f_{\La}(s)$  gives (using $\bar{k} = N-k$):
\begin{equation} {\small{\tableau[mcY]{{\mbox{\tiny
$\bar{3}$}}&{\mbox{\tiny $\bar{2}\!+\!\alpha$}} &{\mbox{\tiny $\bar{1}\!+\!2\alpha$}}&
{\mbox{\tiny $\bar{0}\!+\! 3\alpha$}} \\& &{\mbox{\tiny
$\bar{1}\!+\!3 \alpha$}}&\bl\gcercle\\{\mbox{\tiny $\bar{4}$}}&{\mbox{\tiny
$\bar{3}\!+\!\alpha$}}\\ & \bl\gcercle\\{\mbox{\tiny $\bar{5}$}}\\  \bl \gcercle}} }
\end{equation}
In other words, we have:
\begin{equation}\label{D4}
\Sp_{N,3}\,[J_{(3,1,0;4,2,1)}]= (N-3)(N-2+\alpha)(N-1+2\alpha)(N-1+3\alpha)(N-4)(N-3+\alpha)(N-5).
\end{equation}

This can be compared with the result obtained from filling the skew tableau $\S\La$ with the values $b_{(3,1,0;4,2,1)}^{(\alpha,N)}(s)$ defined in
Theorem~\ref{TheoSpecialI}:
\begin{equation} {\small{\tableau[mcY]{
{\mbox{\tiny$$}}
&{\mbox{\tiny }} &{\mbox{\tiny }}&
{\mbox{\tiny $\bar{0}\!+\!3\alpha$}} \\& &{\mbox{\tiny
$\bar{1}\!+\!2 \alpha$}}&{\mbox{\tiny$\bar{1}\!+\!3\alpha$}} \\{\mbox{\tiny $ $}}&{\mbox{\tiny
$\bar{2}\!+\!\alpha$}}\\ {\mbox{\tiny$\bar{3}$}} &{\mbox{\tiny$\bar{3}\!+\!\alpha$}}
\\
{\mbox{\tiny $\bar{4}$}}\\ {\mbox{\tiny$\bar{5}$}}
}} }
\end{equation}
The resulting expression for $\Sp_{N,3}[J_{(3,1,0;4,2,1)}]$, obtained by taking the products of the entries of the filled squares, is clearly equal to
\eqref{D4}.

The relation between the two expressions for $\Sp_{N,m}[J_{\La}]$
is simply described as follows.
Notice at once that the number of filled squares is the same in the two representations: the number of squares in ${\mathcal B}\La$ is $|\La|-m(m-1)/2$, while there are $|\La|+m-m(m+1)/2$ squares in $\S \Lambda$.
Take the filling of the squares of $\La$ described by the factor $f_\La(s)$. Then replace the circles by squares, thus transforming
the Ferrers diagram of $\La$ into that of $\La^{\circledast}$. Finally, move the filled squares as follows: if the square $s$ belongs to a fermionic (resp. bosonic)
row of $\La$ then displace it to the right (resp. downward) by $\#_\circ s$
(resp. $\#_\bullet s$) units.  It is then straightforward to see that
the two evaluations coincide.

\end{appendix}

\end{document}